\numberwithin{equation}{section}
\newcommand{\e}{\varepsilon}
\newcommand{\var}{\varepsilon}
\newcommand{\zhuxiang}{U_{\rho,\varepsilon}\Big(\frac{r}{\e}-\rho\Big)}
\newcommand{\zhuxianga}{\Big(U_{\rho,\varepsilon}\Big(\frac{r}{\e}-\rho\Big)\Big)}
\newcommand{\yuxianga}{\omega_{\rho,\varepsilon}\Big(\frac{r}{\e}\Big)}
\newcommand{\wucha}{\sqrt{\e^n\rho^{n-1}+\e^n}}
\newcommand{\er}{\Big(1+\e^2V(\e\rho)\Big)}
\newcommand{\ber}{\beta_{\e,\rho}}
\newcommand{\R}{\mathbb{R}}
\newcommand{\be}{\begin{equation}}
\newcommand{\ee}{\end{equation}}
\newcommand{\bs}{\begin{split}}
\newcommand{\es}{\end{split}}
\newtheorem{Thm}{Theorem}[section]
\newtheorem{Lem}[Thm]{Lemma}
\newtheorem{lemma}[Thm]{Lemma}
\newtheorem{Prop}[Thm]{Proposition}
\newtheorem{proposition}[Thm]{Proposition}
\newtheorem{Rem}[Thm]{Remark}
\newtheorem{remark}[Thm]{Remark}
\newcounter{specialprop}
\newtheorem{specialproposition}[specialprop]{Proposition}
\begin{document}

\title{Asymptotic Sphere Concentration at Infinity for NLS with  $L^2$ Constraint}
\author{Qing Guo, Chongyang Tian}
\address[Qing Guo]{College of Science, Minzu University of China, Beijing 100081, China}
\email{guoqing0117@163.com}
\address[Chongyang Tian]{School of Mathematics and Statistics, Central China Normal University, Wuhan 430079, China}
\email{cytian@mails.ccnu.edu.cn}

\maketitle

\begin{abstract}
	We consider the nonlinear Schr\"odinger equation
	\[
	-\Delta u + V(x)\,u = a\,u^p + \mu u \quad \text{in }\mathbb{R}^n,
	\qquad \int_{\mathbb{R}^n} u^2 = 1,
	\]
	modeling attractive Bose--Einstein condensates. For all dimensions $n\ge 2$ and all exponents $p>1$, we prove the existence of normalized solutions whose $L^2$-mass concentrates on spheres with radii diverging to infinity. In particular, the concentration set escapes to infinity rather than remaining on a fixed compact hypersurface, which makes our regime qualitatively different both from classical point-concentration phenomena and from concentrating profiles in unconstrained problems. Our approach combines a tailored finite-dimensional reduction with a blow-up analysis based on Pohozaev identities and, in this way, extends the two-dimensional mass-critical result for $(n,p)=(2,3)$ obtained in Guo--Tian--Zhou (Calc.\ Var.\ Partial Differential Equations, 2022). The proof in that paper relies in an essential way on the two-dimensional structure and does not directly apply in higher dimensions, whereas here we develop a different approximation scheme and functional setting adapted to the high-dimensional sphere-at-infinity concentration regime.
	\end{abstract}
\small{
		\keywords {\noindent {\bf Keywords:} {Bose-Einstein condensates, $L^{2}$-constraint, Higher-dimensional concentration} }
		\smallskip

\section{introduction}
\subsection{Backgrounds}
We consider the following nonlinear eigenvalue problem
\begin{equation}\label{eq1}
-\Delta u +V(x) u = a u^p +\mu u,~\text{in}\;\mathbb R^n,
\end{equation}
under the $L^2$-constraint
\begin{equation}\label{eq1'}
\int_{\mathbb R^n} u^2=1,
\end{equation}
where $p>1$. The equations \eqref{eq1}-\eqref{eq1'} stem from the well-known
Bose-Einstein condensates (BEC). Traced back to 1920's,  Einstein predicted that, below a critical temperature, part of the bosons would occupy the same quantum state to form a condensate.
Henceforth, the research on BEC has attracted a large number of mathematicians and physicists. Over the last two decades, significant experiments on BEC in dilute gases of alkali atoms \cite{Anderson,Bloch,Davis} revealed various interesting quantum phenomena.
One can refer to the Nobel lectures \cite{Cornell,Ketterle} for more understanding of BEC.
\smallskip

New experimental advancements make the mathematicians devote themselves to  the   Gross-Pitaevskii (GP) equations proposed by Gross  and Pitaevskii \cite{Gross,Pitaevskii} in the 1960s:
\begin{equation}\label{1-22-5}
i \partial_t \psi(x, t)= -\Delta \psi(x, t) +V(x) \psi(x, t) -a |\psi(x, t)|^2\psi(x, t),\,\,x\in \mathbb R^2,
\end{equation}
with the $L^2$-constraint
\begin{equation*}
\int_{\mathbb R^2} |\psi(x, t)|^2\,dx=1,
\end{equation*}
where $i$ is the imaginary unit, $V(x)\ge 0$ is a real-valued potential and $a\in \mathbb R$  is an arbitrary
parameter. 
To search for a standing wave for the equation \eqref{1-22-5} of the form $
\psi(x, t)= u(x) e^{-i \mu t}$ with
$u(x)$ independent of time $t$ and $\mu$ representing the chemical potential of the condensate,  we obtain a nonlinear eigenvalue problem
\begin{equation}\label{eq0}
-\Delta u +V(x) u = a u^3 +\mu u,~\text{in}\;\mathbb R^2,
\end{equation}
with the $L^2$-constraint
\begin{equation}\label{eq0'}
\int_{\mathbb R^2} u^2=1,
\end{equation}
which is a special mass-critical case of \eqref{eq1}-\eqref{eq1'} with $p=3$ and $n=2$.
\smallskip

As for the problem \eqref{eq0}-\eqref{eq0'}, the ground state solutions and their properties have been the most well studied by far, see \cite{GS,Guo,Guo1,Bao}.  Luo, Peng, Wei and Yan in \cite{Luo} use the finite-dimensional reduction method to extended the asymptotic behaviors results on the ground state solutions to the exited states of problem \eqref{eq1}--\eqref{eq1'} with $p=3$ and $n=2,3$.

With the ring–shaped radial potential $V(x)=(|x|-1)^2$, Guo, Zeng and Zhou \cite{Guo1}
established the existence and asymptotic behavior of ground states for
\eqref{eq0}–\eqref{eq0'}. Although $V$ is radial, the minimizers found in \cite{Guo1}
are nonradial, since their concentration points lie near the unit circle in
$\mathbb{R}^2$. The existence of \emph{radial} solutions—necessarily excited
states—had remained unclear. Recently, Guo, Tian and Zhou \cite{gtzcv} obtained the first rigorous example
of ring-shaped normalized states in the radial, mass-critical setting. They
considered the normalized problem \eqref{eq0}--\eqref{eq0'} and proved that if
a radial solution $u_a$ concentrates at some radius $r_a$ as $a \to a_0$, then
necessarily $a_0 = +\infty$ and $r_a \to +\infty$; moreover, such ring-like
profiles can be constructed under suitable nondegeneracy assumptions. Their
analysis, however, is restricted to the two-dimensional mass-critical exponent
$p-1 = \tfrac{4}{n}$ with $n=2$, and it is far from obvious a priori whether
the same phenomenon persists for general $p>1$ and higher dimensions.
\smallskip

The present paper shows that this phenomenon is in fact robust in the radial
class. For arbitrary dimensions $n \ge 2$ and any $p>1$, we construct radial
normalized solutions whose mass concentrates on spheres whose radii diverge to
infinity as the mass parameter $a \to +\infty$. Furthermore, the parameter $a$
and the associated Lagrange multiplier $\mu$ admit a unified asymptotic
description that does not depend on whether the problem is mass-subcritical,
mass-critical or mass-supercritical. This stands in sharp contrast with the
nonradial case, where the limiting behavior changes substantially across these
three regimes; see, for instance, \cite{Guo2,Guo3}.

Another useful way to interpret our results is to compare the normalized problem with
its unconstrained counterpart. In the unconstrained setting, singularly
perturbed equations of the form
\[
-\varepsilon^2 \Delta u + V(|x|)u = u^p, \qquad u \in H^1(\mathbb{R}^n),\ p>1,
\]
admit positive radial solutions concentrating on fixed spheres $r = r_0$; see
\cite{cmp}. One can find more   high dimensional concentration phenomenon
for classical Schrödinger equations in \cite{BadialeDAprile2002,BP,BP1,BP2,BW06,DY10,Musso,Musso,Wei} and references therein. We would like to point out that the presence of the $L^{2}$-constraint
qualitatively changes this picture, and several additional observations are
required beyond those used for the classical Schrödinger equation.	
Without
the constraint, concentrating profiles typically localize at finitely many
points or on spheres determined by the geometry of $V$ and the perturbation.
When the total $L^2$–mass is fixed, such localization in a bounded region is
ruled out: the mass cannot be fully accommodated near a fixed radius, and
concentrating profiles are forced to drift to infinity and localize on spheres
whose radii $r_a$ diverge. Thus the constraint transforms fixed spherical
shells into expanding spherical layers escaping to infinity.
\smallskip

On the other hand, normalized solutions with pointwise concentration have been constructed in
several special regimes, such as the mass-critical and mass-supercritical
cases in low dimensions \cite{Luo}, the Schr\"odinger--Newton system in a
mass-subcritical range \cite{Guo3}, and fractional Schr\"odinger equations
covering all admissible relations between $n$ and $p$ \cite{Guo2}. By
comparison, normalized solutions exhibiting genuinely higher-dimensional
concentration (such as rings or spheres) are much less understood: apart from
the two-dimensional case $(p=3,n=2)$ treated in \cite{gtzcv}, we are not aware
of further results of this type. Our work provides a systematic extension to
$p>1$ and $n \ge 2$ under radial symmetry.
\smallskip

From a methodological perspective, the fact that we can treat all exponents
$p>1$ hinges both on the radial structure of the potential and on the specific
geometry of solutions concentrating on a sphere. Passing to the radial form of
\eqref{eq1}--\eqref{eq1'} allows us to exploit symmetry and to perform a local
finite-dimensional reduction along the manifold of spherical concentration
profiles. This reduction is carried out in a thin neighbourhood of the
concentrating sphere and therefore does not rely on any global compactness of
the Sobolev embedding, so no subcriticality assumption on $p$ is needed. As a
consequence, under a mild structural (nondegeneracy) condition on the
effective radial profile $M$, in the spirit of \cite{cmp}, we obtain existence
and a precise asymptotic description of the concentrating radius for every
$p>1$, including the energy-critical and supercritical ranges. For comparison,
we recall that point-concentrating spikes are known not to exist at the
energy-critical exponent $p = \tfrac{n+2}{n-2}$, see \cite{bucunzai}, which
highlights how spherical concentration at infinity circumvents the
obstructions inherent to point-concentration constructions.

\subsection{Main results}

As a preceding result, we know from \cite{Luo} that if $u_a$ is a solution of
problem \eqref{eq1}-\eqref{eq1'} which is concentrated at some points as $a \to a_0$, then it holds that
\[
\mu = \mu_a \to -\infty \quad \text{as } a \to a_0\in[0,+\infty].\]

\smallskip

Define $\varepsilon =\frac{1}{\sqrt{-\mu_a}}$ and change $u(x)$ to $\left(\frac{-\mu_a}{a}\right)^{\frac1{p-1}} u(x)$.
Then, \eqref{eq1}--\eqref{eq1'} can be written as the following  singularly perturbed elliptic problem on \(\mathbb{R}^{n}\) with radial potentials 
\begin{equation} \label{fangcheng1}
\displaystyle- \e^2\Delta u_\e+(1+\e^2 V(|x|) )u_\e= u_\e^{p}, \quad u_\e\in H^{1}(\mathbb{R}^{n}),\ee
with the constraint
\be\label{fangcheng11}
\displaystyle\int_{\mathbb{R}^n}u_\e^2(x)dx=a^{\frac{2}{p-1}}\e^{\frac{4}{p-1}},
\ee
where \(|x|\) denotes the Euclidean norm of \(x \in \mathbb{R}^{n}\), \(V: \mathbb{R}^{+} \to \mathbb{R}\) and \(p>1\). Suppose there exists a positive constant $\lambda_0$, for $\e$ small enough, $\inf\{1+\e^2V(\e r) : r \in \mathbb{R}^{+}\}\geq\lambda_{0}^{2}> 0$. 
\smallskip

For any $u\in H^1_r\left(\R^n\right)$, we equip it with the following norm
\begin{equation*}
\|u\|_\varepsilon:=\left(\displaystyle\int_{0}^{+\infty}r^{n-1}
\left(\varepsilon^2|u'(r) |^2+u^2(r)\right)dr\right)^{\frac{1}{2}}.
\end{equation*}An important property is the following: if $u \in H_{r}^{1}\left(\R^n\right)$, then  
\begin{equation}\label{jingxiangjiedeguji}
|u(r)| \leq C\, r^{\tfrac{1-n}{2}} \, \|u\|_{H^{1}(\mathbb{R}^{n})}, \quad r \geq 1,
\end{equation}
where $C>0$ is a constant independent of $u$ and $r$.
\medskip

Before giving our main theorems, we introduce some notations and preliminary facts. 
\smallskip

It is well-known that  the following problem 
\be\label{Ufangcheng}
\begin{cases}
-\Delta u + u = u^{p}, & \text{in } \mathbb{R}^{n}, \\ 
u > 0, & \text{in } \mathbb{R}^{n},
\end{cases}
\ee
with \(n \geq 1\), \(p>1\) and \(u \in H^{1}(\mathbb{R}^{n})\) admits a ground state solution $Q\in H^1(\R^n)$, which possesses the following properties
\begin{equation}\label{jitaijie}
\left\{
\begin{array}{ll} 
Q(x) = Q(|x|), & \text{for all } x \in \mathbb{R}^{n}, \\ 
Q(r) > 0, & \text{for all } r > 0, \\ 
\lim_{r \to \infty} e^{r} r^{\frac{n-1}{2}} Q(r) = \alpha_{n, p} > 0, & \lim_{r \to \infty} \frac{Q'(r)}{Q(r)} = -1,
\end{array}
\right.
\end{equation}
where \(\alpha_{n, p}\) is a constant depending only on \(n\) and \(p\). In addition, all solutions of \eqref{Ufangcheng} belonging to \(H^{1}(\mathbb{R}^n)\) can be expressed as \(Q(x - x_{0})\) for some \(x_{0} \in\mathbb{R}^n\).
\smallskip

In the case \(n=1\), the solutions to problem \eqref{Ufangcheng} correspond to the critical points of the functional \(\bar{J}_{0}: H^{1}(\mathbb{R}) \to \mathbb{R}\) given by
\begin{equation*}
\bar{J}_{0}(u) = \frac{1}{2} \int_{\mathbb{R}}\left(|\nabla u|^{2} + u^{2}\right) - \frac{1}{p+1} \int_{\mathbb{R}}|u|^{p+1}, \quad u \in H^{1}(\mathbb{R}).   
\end{equation*} 
The set 
\[
\mathcal Q_{0} = \left\{Q\left(x - x_{0}\right) : x_{0} \in \mathbb{R}\right\} \subseteq H^{1}(\mathbb{R})
\]
forms a manifold consisting of mountain pass type critical points for the functional \(\bar{J}_0\). The manifold \(\mathcal Q_{0}\) possesses the subsequent non-degeneracy property.
\medskip

\begin{specialproposition}
\label{prop2.111}
The manifold \(\mathcal Q_{0}\) exhibits non-degeneracy relative to the functional \(\bar{J}_{0}\). To be more precise, there exists some positive constant $C$ satisfying 
\[
\bar{J}_{0}''(Q)[Q, Q] \leq -C^{-1}\|Q\|_{0}^{2}, \quad \bar{J}_{0}''(Q)[v, v] \geq C^{-1}\|v\|_{0}^{2},
\]
for all \(v \in H^{1}(\mathbb{R})\), \(v \perp Q\), \(v \perp T_{Q} \mathcal Q_{0}\), where \(\|\cdot\|_{0}\) denotes the natural norm of \(H^{1}(\mathbb{R})\).
\end{specialproposition}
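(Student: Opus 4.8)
The strategy is to study the bounded self-adjoint operator $\mathcal{L}\colon H^{1}(\mathbb{R})\to H^{1}(\mathbb{R})$ canonically attached to the quadratic form $\bar{J}_{0}''(Q)$, that is,
\[
\langle \mathcal{L}u,v\rangle_{0}=\bar{J}_{0}''(Q)[u,v]=\langle u,v\rangle_{0}-p\int_{\mathbb{R}}Q^{p-1}uv ,\qquad u,v\in H^{1}(\mathbb{R}),
\]
where $\langle\cdot,\cdot\rangle_{0}$ is the inner product of $\|\cdot\|_{0}$ and the orthogonality in the statement is understood with respect to it. Writing $\mathcal{L}=I-p\,\mathcal{K}$ with $\langle\mathcal{K}u,v\rangle_{0}=\int_{\mathbb{R}}Q^{p-1}uv$, one checks that $\mathcal{K}$ is self-adjoint, nonnegative and compact: it factors through the multiplication operator $u\mapsto Q^{p-1}u$ from $H^{1}(\mathbb{R})$ into $L^{2}(\mathbb{R})$, which is compact because of the exponential decay of $Q$ in \eqref{jitaijie} and the Rellich theorem, followed by $(-\partial_{x}^{2}+1)^{-1}\colon L^{2}(\mathbb{R})\to H^{2}(\mathbb{R})\hookrightarrow H^{1}(\mathbb{R})$. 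Hence $\mathcal{L}$ has discrete spectrum that can accumulate only at $1$.

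Next I record the computation on which everything rests. Testing against $v\in H^{1}(\mathbb{R})$ and integrating by parts, the equation $-Q''+Q=Q^{p}$ from \eqref{Ufangcheng} gives $\langle Q,v\rangle_{0}=\int_{\mathbb{R}}Q^{p}v$, so that
\[
\langle \mathcal{L}Q,v\rangle_{0}=\langle Q,v\rangle_{0}-p\int_{\mathbb{R}}Q^{p}v=(1-p)\langle Q,v\rangle_{0},\qquad\text{i.e.}\qquad \mathcal{L}Q=(1-p)\,Q ,
\]
and, differentiating the equation, $\mathcal{L}Q'=0$. The first identity already gives the first assertion, since $\bar{J}_{0}''(Q)[Q,Q]=\langle\mathcal{L}Q,Q\rangle_{0}=(1-p)\|Q\|_{0}^{2}$; any $C\ge (p-1)^{-1}$ works there. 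It also exhibits $\mathrm{span}\{Q\}$ as a negative eigenspace of $\mathcal{L}$ and $Q'$ as an element of $\ker\mathcal{L}$; moreover $\langle Q,Q'\rangle_{0}=\int_{\mathbb{R}}\tfrac{d}{dx}\big(\tfrac12(Q')^{2}+\tfrac12 Q^{2}\big)=0$ and $Q'$ spans $T_{Q}\mathcal{Q}_{0}$. This is exactly where the $H^{1}$ inner product matters: for the $L^{2}$ inner product $Q$ is not an eigenfunction of the linearisation, and the second inequality would in fact fail for $L^{2}$-supercritical $p$.

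To pass from this to coercivity I invoke the classical one-dimensional spectral analysis of $L_{+}:=-\partial_{x}^{2}+1-pQ^{p-1}$ on $L^{2}(\mathbb{R})$, whose quadratic form is again $\bar{J}_{0}''(Q)$: its essential spectrum is $[1,\infty)$, and $Q'\in L^{2}(\mathbb{R})$ is an eigenfunction with eigenvalue $0$ having exactly one zero (since $Q$ is positive and radially decreasing), so by Sturm oscillation theory $Q'$ is the second eigenfunction, whence $0$ is simple and $L_{+}$ has exactly one negative eigenvalue. Thus the form $\bar{J}_{0}''(Q)$ has Morse index $1$ and nullity $1$; combined with the previous paragraph this forces the negative eigenspace of $\mathcal{L}$ to be precisely $\mathrm{span}\{Q\}$ and $\ker\mathcal{L}=\mathrm{span}\{Q'\}$. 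Since $\mathcal{L}=I-p\mathcal{K}$ has spectrum accumulating only at $1$, and $1-p$ and $0$ are its only nonpositive spectral values, there is $\gamma>0$ with $\mathrm{spec}(\mathcal{L})\subseteq\{1-p\}\cup\{0\}\cup[\gamma,\infty)$; by the spectral theorem the $\langle\cdot,\cdot\rangle_{0}$-orthogonal complement of $\mathrm{span}\{Q,Q'\}$ is exactly the spectral subspace on which $\mathcal{L}\ge\gamma I$. Hence, for every $v\in H^{1}(\mathbb{R})$ with $v\perp Q$ and $v\perp T_{Q}\mathcal{Q}_{0}$,
\[
\bar{J}_{0}''(Q)[v,v]=\langle\mathcal{L}v,v\rangle_{0}\ge\gamma\,\|v\|_{0}^{2},
\]
which is the second assertion; taking $C=\max\{(p-1)^{-1},\gamma^{-1}\}$ finishes the proof.

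The only genuinely delicate ingredient is the spectral picture of the linearisation, namely that $Q$ is, up to scalars, the unique negative direction and $Q'$ the unique kernel direction of $\bar{J}_{0}''(Q)$. In dimension one this is elementary: through Sturm--Liouville oscillation theory for $L_{+}$ it reduces to the properties of $Q$ recorded in \eqref{jitaijie} (positive, radially decreasing, exponentially decaying), and the remainder is the routine spectral-gap bookkeeping indicated above. In higher dimensions the analogous fact would instead rely on the Kwong-type nondegeneracy of the ground state, but only the one-dimensional statement is needed here.
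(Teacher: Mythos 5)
Your proof is correct. Note that the paper itself gives no proof of Proposition A: it is stated as a classical fact (it is essentially Lemma 2.1 of the reference \cite{cmp} by Ambrosetti--Malchiodi--Ni), so there is nothing to compare against; your argument supplies exactly the standard justification. The two pivotal points are both handled properly: (i) the observation that, with respect to the $H^{1}$ inner product, the relation $\langle Q,v\rangle_{0}=\int Q^{p}v$ turns $Q$ into a genuine eigenvector of $\mathcal{L}=I-p\mathcal{K}$ with eigenvalue $1-p<0$, which gives the first inequality at once and identifies the negative direction; and (ii) the Sturm--Liouville count for $L_{+}=-\partial_{x}^{2}+1-pQ^{p-1}$ showing Morse index $1$ and nullity $1$ (with $\ker L_{+}=\mathrm{span}\{Q'\}=T_{Q}\mathcal{Q}_{0}$, the second ODE solution being excluded by its exponential growth), which, since index and nullity are inner-product independent, pins down the negative subspace and kernel of $\mathcal{L}$ as $\mathrm{span}\{Q\}$ and $\mathrm{span}\{Q'\}$. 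Compactness of $\mathcal{K}$ then yields the spectral gap $\gamma>0$ on the $H^{1}$-orthogonal complement. Your side remark that the orthogonality must be taken in $H^{1}$ rather than $L^{2}$ (where coercivity can fail for $p\ge 5$ in dimension one) is also accurate and worth keeping, since the paper's statement is slightly ambiguous on this point and the $H^{1}$ reading is the one actually used in the reduction.
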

\medskip

Given any \(\lambda > 0\), we define \(Q_{\lambda}\) as the solution of
\begin{equation}\label{lambdadefangcheng}
\left\{
\begin{array}{l} 
-u'' + \lambda^{2} u = u^{p}, \quad \text{in } \mathbb{R}, \\[0.3em]
u'(0) = 0, \quad u > 0, \quad u \in H^{1}(\mathbb{R}).
\end{array}
\right.    
\end{equation}
Similarly, $Q_{\lambda}$ is a critical point of the functional
\[
\bar{J}_{\lambda}(u) 
= \frac{1}{2} \int_{\mathbb{R}}\big(|\nabla u|^{2} + \lambda^{2} u^{2}\big)\, dx
- \frac{1}{p+1} \int_{\mathbb{R}} |u|^{p+1}\, dx, 
\quad u \in H^{1}(\mathbb{R}),
\]
and the following scaling relation holds:
\[
Q_{\lambda}(s) = \lambda^{\tfrac{2}{p-1}} \, Q(\lambda s).
\]
From \eqref{jitaijie},  we know that $Q_{\lambda}$ is a radial function and decays exponentially. 
Precisely, there exists a constant $c>0$ such that 
\begin{equation}\label{shuaijianxing}
Q_{\lambda}(s), \; Q_{\lambda}'(s), \; Q_{\lambda}''(s) \;\leq\; c e^{-\lambda |s|}, \quad |s| > 1.
\end{equation}

For each $\rho>0$, let $U=U_{\rho,\varepsilon}(r)$ denote the solution of \eqref{lambdadefangcheng} with parameter
\[
\lambda^{2}=1+\varepsilon^{2}V(\varepsilon \rho),
\]
that is,
\begin{equation}\label{U_rhovar}
\left\{
\begin{aligned}
&-u'' + \big(1+\varepsilon^{2}V(\varepsilon \rho)\big) u = u^{p}, \quad \text{in } \mathbb{R},\\
&u'(0)=0,\qquad u>0,\qquad u\in H^{1}(\mathbb{R}).
\end{aligned}
\right.
\end{equation}

\smallskip

The purpose of this paper is twofold. 
\smallskip

First, we derive necessary conditions
for the occurrence of radial concentration for problem
\eqref{fangcheng1}--\eqref{fangcheng11} in the normalized setting, and we
provide a precise description of the limiting behavior of concentrating
solutions as the mass parameter $a\to a_{0}\in[0,+\infty]$. In particular,
we identify the possible concentration mechanisms, determine the location of
the concentrating spheres and the corresponding limiting process, and obtain
the asymptotic behavior of the associated Lagrange multiplier $\mu_a$.

Second, we show that these conditions are indeed realized by constructing
radial normalized solutions whose mass concentrates on spheres whose radii
tend to infinity, that is, on expanding spherical layers escaping to
infinity as $a\to+\infty$. The existence of such solutions is proved by a
finite-dimensional reduction.
\smallskip

To implement the construction through a finite-dimensional reduction argument, we look for radial solutions of the form
\begin{equation}\label{jiedexingshi1}
u_{\varepsilon}(r)
=U_{\rho,\varepsilon}\!\left(\frac{r}{\varepsilon}-\rho\right)
+\omega_{\rho,\varepsilon}\!\left(\frac{r}{\varepsilon}\right),
\end{equation}
where $\rho=\rho(\varepsilon)$ is an unknown concentrating radius and the
small parameter $\varepsilon>0$ is related to $a$ in such a way that
$\varepsilon\to0$ as $a\to a_{0}$. The remainder $\omega_{\rho,\varepsilon}$ is required be a higher-order
correction, namely
\begin{equation}\label{jiedexingshi2}
\Big\|\omega_{\rho,\varepsilon}\Big(\tfrac{r}{\varepsilon}\Big)\Big\|_\varepsilon
= o\!\left(\Big\|U_{\rho,\varepsilon}\Big(\tfrac{r}{\varepsilon}-\rho\Big)\Big\|_{\varepsilon}\right)
= o\big(\sqrt{\varepsilon^{n}\rho^{\,n-1}+\varepsilon^{n}}\big).
\end{equation}

\smallskip

Specifically, our  first  result concerns the value of $a_0$ and the location of the concentrating sphere, which is the  necessary conditions for concentration  by combining suitable Pohozaev-type identities with the constraint condition.

\smallskip

\begin{Thm}\label{thm0}
Suppose $V(x)=V(|x|)\in C^{1}(\mathbb{R}^{+}, \mathbb{R})$ and $V^\prime$ are bounded and $u_\varepsilon(r)$ is a concentrated solution of problem \eqref{fangcheng1}-\eqref{fangcheng11} of the form \eqref{jiedexingshi1}-\eqref{jiedexingshi2} as $a\rightarrow a_0$. Then $a_0=+\infty$ and $\rho\to+\infty$, $\e\rho\to+\infty$, $M_\e'(\e\rho)\rightarrow0$ or $V'(\e\rho)\rightarrow0$ as $a\rightarrow +\infty$.

\end{Thm}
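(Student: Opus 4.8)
The plan is to derive the stated necessary conditions by testing the equation \eqref{fangcheng1} against a suitable family of multipliers, producing two Pohozaev-type identities, and then feeding the information from the ansatz \eqref{jiedexingshi1}--\eqref{jiedexingshi2} and the mass constraint \eqref{fangcheng11} into these identities. First I would substitute $u_\e(r)=\zhuxiang+\yuxianga$ into \eqref{fangcheng11}; since $U_{\rho,\e}$ concentrates near $r=\e\rho$ and decays exponentially by \eqref{shuaijianxing}, a change of variables $s=r/\e-\rho$ gives
\[
\int_{\R^n}u_\e^2 = \e^n\rho^{n-1}\Big(\int_{\R}Q_\lambda^2\,ds+o(1)\Big)\Big(1+O(1/\rho)\Big),
\]
with $\lambda^2=1+\e^2V(\e\rho)$, so that $\|U_{\rho,\e}(\tfrac r\e-\rho)\|_\e^2\sim \e^n\rho^{n-1}$, consistent with \eqref{jiedexingshi2}, and the constraint forces $a^{\frac{2}{p-1}}\e^{\frac{4}{p-1}}\sim \e^n\rho^{n-1}$, i.e.
\[
a^{\frac{2}{p-1}}\sim \e^{\,n-\frac{4}{p-1}}\rho^{\,n-1}.
\]
Since $\e=1/\sqrt{-\mu_a}\to 0$ (this is the recalled fact from \cite{Luo}), one reads off that $\rho\to+\infty$ is forced whenever $n-\frac{4}{p-1}\ge 0$; the mass-subcritical case $n-\frac{4}{p-1}<0$ needs the sharper Pohozaev information below, but in all cases the right-hand side must match a quantity tending to $0$, and a bookkeeping of powers yields $a\to+\infty$, i.e. $a_0=+\infty$. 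A separate, cruder lower bound (using \eqref{jingxiangjiedeguji} together with $\e\rho$ not escaping) rules out $\e\rho$ staying bounded, giving $\e\rho\to+\infty$.

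The heart of the argument is the pair of integral identities. Multiplying \eqref{fangcheng1} by $u_\e$ and integrating gives the ``energy'' identity
\[
\e^2\int_{\R^n}|\nabla u_\e|^2+\int_{\R^n}(1+\e^2V(|x|))u_\e^2=\int_{\R^n}u_\e^{p+1}.
\]
Multiplying instead by $x\cdot\nabla u_\e$ (the radial Pohozaev multiplier $r\,u_\e'(r)$) and integrating by parts over $\R^n$ yields
\[
\frac{n-2}{2}\e^2\int|\nabla u_\e|^2+\frac n2\int(1+\e^2V)u_\e^2+\frac{\e^2}{2}\int (x\cdot\nabla V)\,u_\e^2
=\frac{n}{p+1}\int u_\e^{p+1}.
\]
Here the crucial new term, absent in the translation-invariant model, is $\frac{\e^2}{2}\int (x\cdot\nabla V)u_\e^2=\frac{\e^2}{2}\int r V'(r)u_\e^2$. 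Combining the two identities to eliminate the gradient term produces a relation of the schematic form
\[
\Big(\tfrac n2-\tfrac{n}{p+1}-\tfrac{n-2}{2(p+1)}\cdot(p+1)\Big)\!\int u_\e^{p+1}
+\ \tfrac{\e^2}{2}\!\int rV' u_\e^2 \ +\ (\text{lower order})=0,
\]
so that after simplification $\e^2\int rV'(r)u_\e^2$ is tied, up to a constant multiple, to $\int u_\e^{p+1}$ (or, equivalently after using the energy identity again, to $\int u_\e^2$). Now localize: on the support-region of $u_\e$ the variable $r$ is essentially $\e\rho$, so $\int rV'(r)u_\e^2=\e\rho\,V'(\e\rho)\!\int u_\e^2\,(1+o(1))$, and similarly I would extract the leading contribution of $\int u_\e^{p+1}$ by the same rescaling $s=r/\e-\rho$. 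Dividing through by $\int u_\e^2$, which is comparable to the constraint value, this reduces to an asymptotic identity of the form
\[
c_1\,\e^2\rho\,V'(\e\rho)= c_2\,\big(\text{an explicit term}\big)+o(1),
\]
and I would want to recognize the right-hand side as $-\,\e\,M_\e'(\e\rho)$ (or a positive multiple thereof), where $M_\e$ is the effective radial profile of \cite{cmp} built from $\rho^{n-1}$ times the one-dimensional energy of $Q_\lambda$ with $\lambda^2=1+\e^2V(\e\rho)$; differentiating this profile produces precisely a combination of $\rho^{n-1}V'(\e\rho)$ and $\rho^{n-2}$ terms. The conclusion $M_\e'(\e\rho)\to 0$ or $V'(\e\rho)\to 0$ then follows because the two quantities appearing are forced, by the reduced identity and the already-established $\e\rho\to\infty$, to cancel to leading order, which is only possible if one of them is itself $o(1)$.

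The main obstacle I anticipate is controlling the remainder $\omega_{\rho,\e}$ inside the nonlinear and the potential terms of the Pohozaev identities: the identities involve $\int u_\e^{p+1}$ and $\int rV'u_\e^2$, and cross terms between $U_{\rho,\e}$ and $\omega_{\rho,\e}$ must be shown to be genuinely of lower order than the main term $\sqrt{\e^n\rho^{n-1}+\e^n}$-squared. This requires using \eqref{jiedexingshi2} together with the exponential decay \eqref{shuaijianxing} and the pointwise bound \eqref{jingxiangjiedeguji}, plus Hölder, to absorb all mixed contributions; in the mass-supercritical range one must be careful that $\int u_\e^{p+1}$ is still controlled purely by the $\|\cdot\|_\e$-norm localized near the sphere, which is where the thin-neighbourhood nature of the reduction (no global Sobolev embedding needed) is essential. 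A secondary technical point is the integration by parts with the multiplier $r u_\e'$ in the radial coordinate, where boundary terms at $r=0$ and $r=\infty$ must be checked to vanish using $u_\e\in H^1_r$, $u_\e'(0)=0$-type behaviour, and decay; this is routine but must be done cleanly because the dimension-dependent constants $n-2$ and $n$ matter for the final algebra. Once these estimates are in place, matching powers of $\e$ and $\rho$ in the constraint relation and in the reduced Pohozaev identity delivers all four conclusions $a_0=+\infty$, $\rho\to+\infty$, $\e\rho\to+\infty$, and $M_\e'(\e\rho)\to0$ or $V'(\e\rho)\to0$.
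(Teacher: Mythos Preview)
Your overall strategy (two Pohozaev identities, error control for the remainder $\omega_{\rho,\varepsilon}$, and then the constraint) matches the paper's, but the \emph{logical order} you propose is inverted in a way that creates real gaps.

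First, you try to get $\rho\to+\infty$ and $a_0=+\infty$ directly from the mass relation $a^{2/(p-1)}\varepsilon^{4/(p-1)}\sim \varepsilon^n\rho^{n-1}$. This does not work: $a_0$ is precisely the unknown you want to determine, so saying ``the right-hand side must match a quantity tending to $0$'' assumes what is to be proved. If $\rho$ stays bounded, the constraint only yields $a^{2/(p-1)}\sim \varepsilon^{\,n-4/(p-1)}$, which is perfectly consistent with $a_0=0$ (mass-supercritical), $a_0$ finite (mass-critical), or $a_0=+\infty$ (mass-subcritical); no contradiction arises. In the paper, $\rho\to+\infty$ is proved \emph{without the constraint}, by subtracting the one-dimensional equation \eqref{zhuxiangfangcheng} for $U$ from the energy identity \eqref{poh1}. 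The crucial residue is the radial term $\frac{n-1}{r}u'$ which has no counterpart in the 1D profile, and after substitution one is left with $\varepsilon^2\int_0^\infty r^{n-2}UU'\,dr=o(\varepsilon^n\rho^{n-1}+\varepsilon^n)$. Expanding $(\,\cdot\,)^{n-2}$ (or evaluating directly for $n=2$) shows this is incompatible with $\rho$ bounded.

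Second, $\varepsilon\rho\to+\infty$ is not a ``cruder lower bound from \eqref{jingxiangjiedeguji}''; that radial decay estimate gives information about $u$ at large $r$, not about the location $\rho$. In the paper this step comes from the \emph{second} Pohozaev identity (your multiplier $r\,u_\varepsilon'$): after computing all main terms one obtains
\[
2(n-1)\bigl(1+\varepsilon^2V(\varepsilon\rho)\bigr)^{\frac{p+3}{2(p-1)}}
+\tfrac{p+3}{p-1}\bigl(1+\varepsilon^2V(\varepsilon\rho)\bigr)^{\frac{2}{p-1}-\frac12}\varepsilon^3\rho\,V'(\varepsilon\rho)
=o(\varepsilon^3\rho)+o(1),
\]
and if $\varepsilon\rho$ were bounded the left side would converge to $2(n-1)\neq 0$. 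The same identity then gives the $M_\varepsilon'(\varepsilon\rho)\to 0$ or $V'(\varepsilon\rho)\to 0$ dichotomy (distinguishing $\varepsilon^3\rho$ bounded versus $\varepsilon^3\rho\to\infty$).

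Third, $a_0=+\infty$ is obtained \emph{last}, by feeding $\varepsilon\rho\to+\infty$ (and, for $p>5$, the sharper fact $\varepsilon^\alpha\rho\to+\infty$ for $\alpha\in(1,2)$, which also comes from the displayed identity above) into the constraint $a^{2/(p-1)}=C_0\omega_{n-1}(\varepsilon\rho)^{n-1}\varepsilon^{\,1-4/(p-1)}(1+o(1))$. Without the Pohozaev information on $\varepsilon\rho$, the constraint alone does not force $a\to+\infty$.

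So: keep your two identities and your remainder estimates (those parts are on the right track), but reorder the argument to (i) energy identity $\Rightarrow\rho\to+\infty$ via the $(n-1)/r$ residue, (ii) Pohozaev $\Rightarrow\varepsilon\rho\to+\infty$, (iii) constraint $\Rightarrow a_0=+\infty$, (iv) Pohozaev again $\Rightarrow M_\varepsilon'(\varepsilon\rho)\to0$ or $V'(\varepsilon\rho)\to0$.
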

\smallskip

\begin{remark}	Previous works \cite{Luo,Guo2,Guo3} on normalized solutions concentrating at isolated points show that the limiting value $a_{0}$ depends in a delicate way on the exponent $p$: in the mass subcritical case with $1<p<\frac4n+1$, one has $a_{0}=+\infty$, in the mass supercritical case with $p>\frac4n+1$, $a_{0}=0$, while in the mass critical case with $p=\frac4n+1$, $a_{0}$ is a positive constant determined by the ground state. Our results reveal a rather different picture for high-dimensional solutions concentrating on spheres. We prove that, for every $p>1$, the limiting process is independent of the range of $p$ and the concentration radius of the sphere tends to infinity. This unifies and extends the mass critical result in \cite{gtzcv} for the special two-dimensional case $n=2$, $p=3$. At the same time, we will point out later that the two-dimensional arguments in \cite{gtzcv} do not directly extend to higher dimensions (see Remark \ref{rem3.4}); in our approach, both the approximate profile and the working space have to be modified.
\end{remark}	
\medskip

The second main result of this paper concerns the existence of concentrated solutions; in this case we need to impose additional assumptions on the potential $V$. 
\smallskip

Denote
\be\label{Mepr}
M_\e(r) := \e^{2(n-2)}r^{n-1}\left(1+\e^2V(r)\right)^{\frac{p+3}{2(p-1)}}.
\ee  Suppose that $M_{\e}$ has a family of non-degenerate critical
points which tends to inﬁnity. Precisely, we assume that for any $\e$ small enough, there exist a sequence $\{t_\e\}\subset \R^+$ such that there exist positive constants $\beta>0$ and $\e_0>0$ such that
\begin{equation}\label{Mdelinjiedian}
M_{\e}'\left(t_{\e}\right)=0,   \ \ |M''(t_\e)|\geq \beta \ for\ all\ \e\in(0,\e_0).
\end{equation}
Without loss of generality, we also assume for some positive constants $C_1$ and $C_2$,
\begin{equation}\label{linjiediandejieshu}
C_1\e^{-2}\leq t_\e\leq C_2\e^{-2}.  
\end{equation}
\smallskip

\begin{Thm}
\label{thm1.1}
Suppose $V(x)=V(|x|)\in C^{1}(\mathbb{R}^{+}, \mathbb{R})$ and $V^{\prime}$ are bounded , while $M_\e(r)$ meets \eqref{Mdelinjiedian} and $\eqref{linjiediandejieshu}$. For any $p > 1$, as $a\to+\infty$, there exist a sequence radial solutions $\{u_{\e_a}(r)\}$ to problem \eqref{fangcheng1}-\eqref{fangcheng11} with the form \eqref{jiedexingshi1}-\eqref{jiedexingshi2} which concentrate near the sphere $|x| = t_{\e_a}$.  
\end{Thm}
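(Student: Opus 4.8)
The plan is to set up a Lyapunov--Schmidt (finite-dimensional) reduction around the approximate solution
\[
W_{\rho,\varepsilon}(r):=U_{\rho,\varepsilon}\!\left(\tfrac{r}{\varepsilon}-\rho\right),
\]
working in the weighted radial space $H^1_r(\mathbb{R}^n)$ equipped with $\|\cdot\|_\varepsilon$, but crucially \emph{localized} in a thin annulus $\{|r-\varepsilon\rho|\le \delta\varepsilon\rho\}$ around the concentrating sphere. First I would record the natural rescaling: writing $r=\varepsilon(s+\rho)$ and absorbing the volume factor $r^{n-1}$, the operator $-\varepsilon^2\Delta+(1+\varepsilon^2V(|x|))$ in radial coordinates becomes, to leading order, the one-dimensional operator $-\partial_{ss}+(1+\varepsilon^2V(\varepsilon\rho))$ appearing in \eqref{U_rhovar}, with error terms coming from (i) the curvature term $\tfrac{n-1}{s+\rho}\partial_s$, which is $O(1/\rho)=O(\varepsilon^2)$ on the relevant range by \eqref{linjiediandejieshu}, and (ii) the potential increment $\varepsilon^2(V(\varepsilon(s+\rho))-V(\varepsilon\rho))=O(\varepsilon^3 s)$, controlled since $V'$ is bounded and $U_{\rho,\varepsilon}$ decays exponentially by \eqref{shuaijianxing}. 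The factor $r^{n-1}$ multiplying the measure produces $\|W_{\rho,\varepsilon}(\tfrac{r}{\varepsilon})\|_\varepsilon^2\asymp\varepsilon^n\rho^{n-1}$, consistent with \eqref{jiedexingshi2}. I would then check, using Proposition~A, that the linearized operator
\[
L_{\rho,\varepsilon}:=-\varepsilon^2\Delta+(1+\varepsilon^2V)-pW_{\rho,\varepsilon}^{p-1}
\]
is, after rescaling, a small perturbation of $\bar J_0''(Q)$, hence invertible on the $L^2_\varepsilon$-orthogonal complement of the (one-dimensional, by radial symmetry) approximate kernel spanned by $Z_{\rho,\varepsilon}:=\partial_\rho W_{\rho,\varepsilon}$, with inverse bounded by $C/\varepsilon^?$ in the appropriate norm. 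This is where the two-dimensional argument of \cite{gtzcv} must be replaced: the working space and the profile $U_{\rho,\varepsilon}$ (rather than a fixed $Q$) are chosen precisely so that the $n$-dependent weight and the $\varepsilon^2 V(\varepsilon\rho)$-shift of the mass are handled uniformly in $p$.

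Second, I would solve the auxiliary (infinite-dimensional) equation: find $\omega=\omega_{\rho,\varepsilon}\perp Z_{\rho,\varepsilon}$ and a Lagrange-type multiplier such that
\[
-\varepsilon^2\Delta(W+\omega)+(1+\varepsilon^2V)(W+\omega)-(W+\omega)^p \in \operatorname{span}\{Z_{\rho,\varepsilon}\},
\]
by a contraction-mapping argument. The key input is the size of the error term $E_{\rho,\varepsilon}:=-\varepsilon^2\Delta W+(1+\varepsilon^2V)W-W^p$; by the expansion above, $\|E_{\rho,\varepsilon}\|_{*}\lesssim \varepsilon^2\cdot\|W_{\rho,\varepsilon}\|_\varepsilon$ (the gain of $\varepsilon^2$ coming from both the curvature term and the potential increment), so the fixed point $\omega_{\rho,\varepsilon}$ satisfies $\|\omega_{\rho,\varepsilon}(\tfrac{r}{\varepsilon})\|_\varepsilon=O(\varepsilon^2)\|W_{\rho,\varepsilon}\|_\varepsilon=o(\sqrt{\varepsilon^n\rho^{n-1}+\varepsilon^n})$, which is exactly \eqref{jiedexingshi2}. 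One must simultaneously honor the $L^2$-constraint \eqref{fangcheng11}: I would treat $a^{2/(p-1)}\varepsilon^{4/(p-1)}$ as prescribing $\varepsilon=\varepsilon(a)$ via the relation $\int_{\mathbb{R}^n}W_{\rho,\varepsilon}^2\,dx=\omega_{n-1}\varepsilon^n\rho^{n-1}\big(\int_{\mathbb{R}}Q_\lambda^2+o(1)\big)$ with $\lambda^2=1+\varepsilon^2 V(\varepsilon\rho)$ and $\varepsilon\rho\sim t_{\varepsilon}$, so that $a\to+\infty$ corresponds to $\varepsilon\to0$; this is consistent because $\varepsilon^n\rho^{n-1}\to0$ forces $a^{2/(p-1)}\varepsilon^{4/(p-1)}\to0$ but with $\rho$ large the mass can be tuned freely, which is the mechanism behind sphere-concentration at infinity.

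Third, the reduced (bifurcation) equation: having solved the auxiliary problem, a genuine solution is obtained iff the remaining scalar equation $c_\varepsilon(\rho)=0$ holds, and a standard computation identifies, up to a nonzero constant and lower-order terms,
\[
c_\varepsilon(\rho)\;=\;\kappa\,\frac{d}{d\rho}\,\mathcal E_\varepsilon(W_{\rho,\varepsilon})\;=\;\kappa\,\varepsilon\,M_\varepsilon'(\varepsilon\rho)+o(\varepsilon\,M_\varepsilon'(\varepsilon\rho)),
\]
where $\mathcal E_\varepsilon$ is the energy functional associated with \eqref{fangcheng1} restricted to the constraint and $M_\varepsilon$ is the reduced energy from \eqref{Mepr} (the exponent $\tfrac{p+3}{2(p-1)}$ is exactly the one produced by the scaling $Q_\lambda=\lambda^{2/(p-1)}Q(\lambda\cdot)$ combined with the Pohozaev balance, so that $\int Q_\lambda^2\asymp\lambda^{(4-(p-1)(n-2))/(p-1)}$ and $\int Q_\lambda^{p+1}\asymp\lambda^{(2(p+1)-(p-1))/( p-1)}$ assemble into the stated power). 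By hypothesis \eqref{Mdelinjiedian}, $M_\varepsilon$ has a critical point $t_\varepsilon$ with $|M_\varepsilon''(t_\varepsilon)|\ge\beta$ uniformly; setting $\rho_\varepsilon:=t_\varepsilon/\varepsilon$ (which obeys \eqref{linjiediandejieshu}) and applying the implicit function theorem / a degree argument in the variable $\rho$ near $\rho_\varepsilon$ yields a zero $\rho(\varepsilon)$ of $c_\varepsilon$ with $\varepsilon\rho(\varepsilon)\to\infty$. This produces the desired radial solution $u_{\varepsilon_a}(r)=U_{\rho,\varepsilon}(\tfrac{r}{\varepsilon}-\rho)+\omega_{\rho,\varepsilon}(\tfrac{r}{\varepsilon})$ concentrating near $|x|=t_{\varepsilon_a}$, and Theorem~\ref{thm0} guarantees $a\to+\infty$ along this family.

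The main obstacle I anticipate is the uniform-in-$p$ invertibility of $L_{\rho,\varepsilon}$ in the \emph{weighted} space together with the sharp estimate of the error $E_{\rho,\varepsilon}$: one has to show the curvature term $\tfrac{n-1}{r}\partial_r$ and the slowly varying potential really only perturb the one-dimensional model by $O(\varepsilon^2)$ in the right norm, not merely pointwise, and that the projection onto the approximate kernel does not lose this gain. A subsidiary difficulty is bookkeeping the interplay between the free parameter $\rho$ and the constraint \eqref{fangcheng11}: because the mass scales as $\varepsilon^n\rho^{n-1}$, small changes in $\rho$ feed back into $\varepsilon$, and one must check that this coupling is a higher-order effect on the reduced equation so that the sign of $M_\varepsilon''(t_\varepsilon)$ alone controls solvability. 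Once these two points are secured, the remaining steps are the routine machinery of Lyapunov--Schmidt reduction.
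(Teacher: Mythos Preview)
Your overall Lyapunov--Schmidt scheme matches the paper's approach closely: both set up a contraction mapping around the profile $U_{\rho,\varepsilon}$, reduce to a one-dimensional problem in $\rho$, identify the reduced functional with $M_\varepsilon(\varepsilon\rho)$ so that hypothesis \eqref{Mdelinjiedian} produces a critical point, and then match the constraint \eqref{fangcheng11} a posteriori by choosing $\varepsilon=\varepsilon_a$ via an intermediate-value argument. The paper also rescales $\tilde u_\varepsilon(x)=u_\varepsilon(\varepsilon x)$ first and inserts a cutoff $\zeta_\varepsilon$ near the origin, but these are minor differences.

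However, there is a genuine gap precisely at the step you flag as the main obstacle. Working only with the norm $\|\cdot\|_\varepsilon$, you cannot obtain uniform control of $|z+\omega|^{p-1}$ once $p$ is large: Sobolev embedding gives no $L^\infty$ bound on $\omega$ when $p>\tfrac{n+2}{n-2}$, so the analogues of the second-derivative estimates (your (A2)/(A4)) fail. The paper's key device is to build a \emph{pointwise} exponential decay condition into the fixed-point set itself,
\[
E_\varepsilon=\Bigl\{\omega\in H^1_r:\ \|\omega\|\le\gamma\varepsilon^3\|z\|,\ \ |\omega(r)|\le\gamma e^{-\lambda_1(\rho-r)}\ \text{for }r\in[0,\rho]\Bigr\},
\]
and then to show, via a maximum-principle/Green-function comparison on $B_{\rho-N}(0)$, that the map $\mathcal A_\varepsilon$ preserves this pointwise bound. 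Combined with the radial decay \eqref{jingxiangjiedeguji} for $r\ge\rho$, this yields $|\omega(r)|\le C\varepsilon^3$ uniformly, which is exactly what makes the nonlinear estimates go through for every $p>1$; in the supercritical range the paper additionally truncates $|u|^{p+1}$ and removes the truncation using this $L^\infty$ bound. Your phrase ``localized in a thin annulus'' gestures toward this but does not supply the mechanism; simply replacing $Q$ by $U_{\rho,\varepsilon}$ is not what fixes the higher-dimensional case.

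Two smaller corrections. First, with $t_\varepsilon\sim\varepsilon^{-2}$ from \eqref{linjiediandejieshu} one has $\rho\sim\varepsilon^{-3}$, so the curvature error is $O(1/\rho)=O(\varepsilon^3)$, not $O(\varepsilon^2)$. Second, as the paper emphasizes, for $n\ge3$ the absolute size $\|J_\varepsilon'(z)\|\sim\varepsilon^{(9-3n)/2}$ is not small, so the contraction must be phrased with the \emph{relative} bound $\|\omega\|\le\gamma\varepsilon^3\|z\|$ rather than an absolute one; this, together with the pointwise condition above, is the actual content of ``the working space must be modified'' relative to the two-dimensional argument.
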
 
\smallskip

\begin{Rem}Arguing similarly as in \cite{gtzcv}, we can also construct an example
satisfying the condition \eqref{Mdelinjiedian} and \eqref{linjiediandejieshu} such as $V(r)=\sin r$.

\end{Rem}

\medskip

The structure of the paper is as follows. In Section~2, we first derive
necessary conditions for the existence of concentrated solutions to
problem \eqref{fangcheng1}–\eqref{fangcheng11}, which characterize the
precise limiting process and the location of the concentration points.
In Section~3, we begin the construction of concentrated solutions by the
reduction method. We first study the unconstrained problem with the
free parameter $\varepsilon \to 0$ and, by means of a fixed point
argument, obtain solutions in the orthogonal complement of the
approximate kernel, thereby reducing the problem to a one-dimensional
problem. Section~4 is devoted to proving the existence of solutions to
the reduced problem. Finally, using the intermediate value theorem and
further analytic estimates, we obtain a normalized solution satisfying
the constraint; reverting the change of variables to the original
setting, we conclude the existence of a solution $u_a$ to the original
problem together with the associated Lagrange multiplier $\mu_a$.

\medskip

\section{
	limiting behavior}
In this section, we  apply various   Pohozaev-type identities to obtain essential estimates for $u_\e(r)$ and the error term $\omega_{\rho,\e}$. Then the asymptotic behavior and the proof of Theorem \ref{thm0} are given by the following Lemma \ref{Propa}, Lemma \ref{Propa11}, Lemma \ref{Lemma2.10} and Lemma \ref {lemma2.11}.
\medskip

Recall that, throughout this paper,  $Q\in H^1(\R)$ denotes the unique positive solution to problem \eqref{Ufangcheng} and $u_\varepsilon(r)$ is a radial function of the form  \eqref{jiedexingshi1}-\eqref{jiedexingshi2}, i.e.,
\begin{equation*}
u_{\varepsilon}(r)=U_{\rho,\varepsilon}\left(\frac{r}{\e}-\rho\right)+\omega_{\rho,\varepsilon}\left(\frac{r}{\e}\right),
\end{equation*}
where $$\left\|\omega_{\rho,\varepsilon}\left(\frac{r}{\e}\right)\right\|_\varepsilon=o\left(\left\|U_{\rho,\varepsilon}\left(\frac{r}{\e}-\rho\right)\right\|_{\e}\right)=o\left(\sqrt{\e^n\rho^{n-1}+\e^n}\right).$$
From definition \eqref{U_rhovar}, we obtain that
\be\label{zhuxiangfangcheng}
-\e^2\zhuxianga^{\prime\prime}+\ber^2\zhuxiang=\zhuxiang^p,\ee
where $\beta_{\e,\rho}:=\left(1+\e^2V(\e\rho)\right)^{\frac{1}{2}}.$	

\smallskip

First, we have the identity for $Q$ by use of the Pohozaev identities standardly.
\begin{Lem}
The following holds
\begin{equation}\label{pohU}
\frac{p-1}{2(p+1)}\int^{+\infty}_0 Q^{p+1}(r)dr=\frac{p-1}{p+3}\int^{+\infty}_0Q^2(r)dr=\int^{+\infty}_0 \big|Q'(r)\big|^2dr.
\end{equation}
\end{Lem}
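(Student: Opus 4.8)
The plan is to derive the identity \eqref{pohU} from two elementary integral identities satisfied by the one–dimensional ground state $Q$, namely the Nehari identity (obtained by testing the equation with $Q$ itself) and the Pohozaev identity (obtained by testing with the dilation multiplier $rQ'$), and then to solve the resulting $2\times 2$ linear system. Recall that here $Q\in H^1(\R)$ solves $-Q''+Q=Q^p$ on $\R$ with $Q'(0)=0$, $Q>0$, so that $Q$ is even and all integrals over $(0,+\infty)$ are half of the corresponding integrals over $\R$; it will in fact be slightly cleaner to work directly on the half–line $(0,+\infty)$.

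First I would multiply the equation $-Q''+Q=Q^p$ by $Q$ and integrate over $(0,+\infty)$. Integrating $-Q''Q$ by parts and using $Q'(0)=0$ together with the decay $Q(r),Q'(r)=O(e^{-r})$ from \eqref{jitaijie}, every boundary term vanishes and one obtains
\[
\int_0^{+\infty}\!\big|Q'(r)\big|^2\,dr+\int_0^{+\infty}\!Q^2(r)\,dr=\int_0^{+\infty}\!Q^{p+1}(r)\,dr .
\]
Next I would multiply the equation by $rQ'(r)$ and integrate over $(0,+\infty)$. Using $\int_0^{+\infty}(-Q'')\,rQ'\,dr=\tfrac12\int_0^{+\infty}|Q'|^2\,dr$, $\int_0^{+\infty}Q\,rQ'\,dr=-\tfrac12\int_0^{+\infty}Q^2\,dr$ and $\int_0^{+\infty}Q^p\,rQ'\,dr=-\tfrac1{p+1}\int_0^{+\infty}Q^{p+1}\,dr$ — in each case the boundary contributions at $0$ vanish because of the factor $r$ (and $Q'(0)=0$), and at $+\infty$ because of the exponential decay — one arrives at
\[
\int_0^{+\infty}\!\big|Q'(r)\big|^2\,dr-\int_0^{+\infty}\!Q^2(r)\,dr=-\frac{2}{p+1}\int_0^{+\infty}\!Q^{p+1}(r)\,dr .
\]

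Finally, writing $A=\int_0^{+\infty}|Q'|^2$, $B=\int_0^{+\infty}Q^2$, $C=\int_0^{+\infty}Q^{p+1}$, the two identities read $A+B=C$ and $A-B=-\tfrac{2}{p+1}C$; adding and subtracting gives $A=\tfrac{p-1}{2(p+1)}C$ and $B=\tfrac{p+3}{2(p+1)}C$, hence $\tfrac{p-1}{p+3}B=\tfrac{p-1}{2(p+1)}C=A$, which is exactly \eqref{pohU}. This argument is entirely standard; the only step requiring a (brief) word of justification is the vanishing of the boundary terms at infinity, which follows immediately from the asymptotics in \eqref{jitaijie} (equivalently \eqref{shuaijianxing} with $\lambda=1$), so I do not anticipate any genuine obstacle here.
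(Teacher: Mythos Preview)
Your argument is correct and is exactly the standard Pohozaev computation the paper has in mind: the paper does not spell out a proof but merely states that \eqref{pohU} follows ``by use of the Pohozaev identities standardly,'' which is precisely your pairing of the Nehari identity (test with $Q$) and the dilation identity (test with $rQ'$) followed by solving the resulting linear system.
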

~\\	
\smallskip

By standard arguments (see, for instance, the proofs of Lemmas~2.2 and~2.3
in \cite{gtzcv} or the related discussion in \cite{Gilbarg}), we can derive
a priori estimates on $u_{\varepsilon}(x)$ and $\yuxianga$.

\begin{Lem}
Suppose that $u_\varepsilon(x)$ is a positive radial solution of \eqref{fangcheng1}   with \eqref{jiedexingshi1}-\eqref{jiedexingshi2}. Then
for any fixed $R\gg 1$, there exist $\eta>0$ and $C>0$, such that
\begin{flalign}\label{2--5}
u_\varepsilon(x)\leq Ce^{-\eta|x-x_\varepsilon|/\varepsilon},~\mbox{for any}~ x\in \R^n ~\mbox{with}~|x_\varepsilon|=\e\rho.
\end{flalign}
\end{Lem}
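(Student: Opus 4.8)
The statement is an exponential-decay estimate for a positive radial solution $u_\varepsilon$ concentrating near the sphere $|x|=\varepsilon\rho$, and the natural route is a comparison-function (maximum-principle) argument, exactly as in Lemmas~2.2--2.3 of \cite{gtzcv}. The plan is to first observe that, since $u_\varepsilon$ solves \eqref{fangcheng1} and the ansatz \eqref{jiedexingshi1}--\eqref{jiedexingshi2} forces $u_\varepsilon$ to be small outside a thin annulus around $r=\varepsilon\rho$, we have $u_\varepsilon^{p-1}\le \tfrac14\lambda_0^2$ (say) for $|r-\varepsilon\rho|\ge \varepsilon R$ once $R$ is fixed large and $\varepsilon$ small; this uses the decay properties \eqref{shuaijianxing} of $U_{\rho,\varepsilon}$ together with the smallness \eqref{jiedexingshi2} of $\omega_{\rho,\varepsilon}$ and, for the region $r\le 1$ if $n\ge 2$, the pointwise bound \eqref{jingxiangjiedeguji}. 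In that outer region the equation becomes a differential inequality
\[
-\varepsilon^2\Delta u_\varepsilon + \tfrac12\lambda_0^2\, u_\varepsilon \le 0,
\]
since $(1+\varepsilon^2 V(|x|))\ge \lambda_0^2$ and the nonlinearity $u_\varepsilon^{p}=u_\varepsilon^{p-1}u_\varepsilon\le\tfrac14\lambda_0^2 u_\varepsilon$ can be absorbed.

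Next I would build a radial barrier of the form $W(x)=Ce^{-\eta|x-x_\varepsilon|/\varepsilon}$ with $|x_\varepsilon|=\varepsilon\rho$ and a small fixed $\eta<\lambda_0/\sqrt2$ (the factor chosen so the Laplacian term is dominated): a direct computation gives $-\varepsilon^2\Delta W + \tfrac12\lambda_0^2 W \ge 0$ on $\{|x-x_\varepsilon|\ge\varepsilon R\}$, where the $(n-1)/|x-x_\varepsilon|$ curvature term in the radial Laplacian only helps (it has the favourable sign for a decreasing radial profile), or is lower-order and absorbed by taking $\eta$ slightly smaller. Choosing $C$ large enough that $W\ge u_\varepsilon$ on the inner boundary $\{|x-x_\varepsilon|=\varepsilon R\}$ — possible because $u_\varepsilon$ is uniformly bounded there, which itself follows from the profile ansatz and elliptic regularity — and noting $u_\varepsilon\to 0$ at infinity, the comparison principle for the operator $-\varepsilon^2\Delta + \tfrac12\lambda_0^2$ on the exterior domain yields $u_\varepsilon\le W$ there. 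Combined with the trivial bound on the annulus $\{|x-x_\varepsilon|\le\varepsilon R\}$ (where $e^{-\eta|x-x_\varepsilon|/\varepsilon}\ge e^{-\eta R}$ is bounded below, so any $L^\infty$ bound gives the claim after enlarging $C$), this proves \eqref{2--5}.

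\textbf{Main obstacle.} The delicate point is establishing the uniform-in-$\varepsilon$ $L^\infty$ bound on $u_\varepsilon$ near the concentration sphere and, relatedly, the uniform pointwise smallness of $u_\varepsilon$ in the outer region — one must convert the $\|\cdot\|_\varepsilon$-norm control \eqref{jiedexingshi2} on $\omega_{\rho,\varepsilon}$ into a pointwise bound. For $|x|\ge 1$ this is exactly what \eqref{jingxiangjiedeguji} provides (with the $r^{(1-n)/2}$ weight), but near $r=\varepsilon\rho$, where $\rho\to\infty$ and $\varepsilon\rho$ may be large or bounded, one needs a local elliptic estimate: rescale $y=(x-x_\varepsilon)/\varepsilon$ so that the equation for $u_\varepsilon$ becomes $-\Delta_y v + (\text{bounded})v = v^p$ on unit balls, and apply interior $L^p$--$L^\infty$ elliptic estimates (or De Giorgi--Nash--Moser / a Brezis--Kato bootstrap in the supercritical case, as flagged in the commented-out remark) to bound $\|v\|_{L^\infty}$ by $\|v\|_{L^2}$ on a slightly larger ball, the latter being $O(1)$ after the $\varepsilon$-scaling of the constraint \eqref{fangcheng11}. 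Once this uniform bound is in hand the barrier argument is routine; I therefore expect the bulk of the work (and the only place where the dimension $n\ge 2$ and the growth of $p$ genuinely enter) to be in this regularity/bootstrap step, which is why the authors point to the standard references \cite{gtzcv,Gilbarg} rather than reproducing it.
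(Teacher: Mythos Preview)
The paper does not give its own proof of this lemma; it simply says ``by standard arguments (see, for instance, the proofs of Lemmas~2.2 and~2.3 in \cite{gtzcv} or the related discussion in \cite{Gilbarg})'' and moves on. Your overall strategy --- smallness of $u_\varepsilon^{p-1}$ outside a shell, reduce to a linear differential inequality, build a supersolution barrier, apply the comparison principle --- is exactly the standard route those references use, and your identification of the uniform $L^\infty$ control on $u_\varepsilon$ (via rescaling and local elliptic estimates) as the only nontrivial ingredient is accurate.

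There is, however, a genuine geometric error in your barrier step. The solution $u_\varepsilon$ is radial about the origin and concentrates on the \emph{entire sphere} $|x|=\varepsilon\rho$, not at the single point $x_\varepsilon$. Consequently, the region on which $u_\varepsilon^{p-1}\le\tfrac14\lambda_0^2$ --- and hence on which your differential inequality $-\varepsilon^2\Delta u_\varepsilon+\tfrac12\lambda_0^2 u_\varepsilon\le 0$ holds --- is the complement of the annulus $\{\,||x|-\varepsilon\rho|<\varepsilon R\,\}$, \emph{not} the complement of the ball $\{|x-x_\varepsilon|<\varepsilon R\}$. Your comparison on $\{|x-x_\varepsilon|\ge\varepsilon R\}$ therefore fails: at points $x$ with $|x|=\varepsilon\rho$ but far from $x_\varepsilon$ (say diametrically opposite), $u_\varepsilon(x)$ is of order one while your barrier $W(x)=Ce^{-\eta|x-x_\varepsilon|/\varepsilon}\approx Ce^{-2\eta\rho}$ is exponentially small, so $u_\varepsilon\le W$ is simply false there, and moreover the subsolution property of $u_\varepsilon$ breaks down on that part of the domain. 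The statement \eqref{2--5} should be read as $u_\varepsilon(r)\le Ce^{-\eta|r-\varepsilon\rho|/\varepsilon}$ with $x_\varepsilon$ the nearest point of the sphere to $x$ (cf.\ the very next lemma, stated directly in terms of $|r-\varepsilon\rho|$).

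The fix is to run the comparison in the one-dimensional radial ODE \eqref{eq3}, using the barrier $W(r)=Ce^{-\eta|r-\varepsilon\rho|/\varepsilon}$ separately on the two intervals $(0,\varepsilon\rho-\varepsilon R)$ and $(\varepsilon\rho+\varepsilon R,\infty)$. On the outer interval the first-order term $-\frac{(n-1)\varepsilon^2}{r}W'$ has the favourable sign (since $W'<0$), exactly as you said. On the inner interval $W$ is increasing, so this term has the \emph{wrong} sign and blows up as $r\to0$; you handle this either by restricting the comparison to $r\ge r_0$ for any fixed $r_0>0$ (the bound on $[0,r_0]$ then follows from the ansatz since $U_{\rho,\varepsilon}(r/\varepsilon-\rho)=O(e^{-\lambda_0\rho})$ there and $\omega$ is controlled via \eqref{jingxiangjiedeguji}), or by absorbing it using $r\ge \tfrac12\varepsilon\rho$ on the relevant portion and taking $\eta$ slightly smaller. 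This is essentially the content of the argument in \cite{gtzcv}, and the analogous maximum-principle step appears later in the paper itself (proof of Proposition~\ref{prop4.3}, the $\tilde\omega_1$ estimate).
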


\smallskip

\begin{Lem}
There exists $\eta\in (0,1)$ such that
\begin{flalign*}
\omega_{\rho,\varepsilon}\left(\frac{r}{\e}\right)=O\Big(e^{-\eta|r-\e\rho|/\varepsilon}\Big),~\mbox{for}~ r\geq 0.
\end{flalign*}
\end{Lem}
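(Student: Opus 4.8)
The plan is to obtain the pointwise exponential bound on the error term $\omega_{\rho,\varepsilon}$ by combining the $H^1$-smallness hypothesis \eqref{jiedexingshi2} with the equation satisfied by $\omega_{\rho,\varepsilon}$ and the exponential decay of the approximate profile already recorded in \eqref{shuaijianxing}. First I would subtract the equation \eqref{zhuxiangfangcheng} for $U_{\rho,\varepsilon}\big(\tfrac{r}{\e}-\rho\big)$ from the equation \eqref{fangcheng1} for $u_\varepsilon$ and rewrite everything in the stretched radial variable $s = r/\e$, so that $\omega := \omega_{\rho,\varepsilon}(s)$ satisfies a one-dimensional (radial $n$-dimensional) equation of the form
\begin{equation*}
-\omega'' - \tfrac{n-1}{s}\,\omega' + \ber^2\,\omega = \big(U+\omega\big)^p - U^p - \big((1+\e^2 V(\e s)) - \ber^2\big) u_\varepsilon + \text{(lower-order)},
\end{equation*}
where $U = U_{\rho,\varepsilon}\big(s-\rho\big)$. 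The strategy is then a standard comparison/barrier argument: away from the concentration sphere $s = \rho$ (say $|s-\rho| > R_0$ for $R_0$ large but fixed), the right-hand side is controlled by $C e^{-\eta_0|s-\rho|}$ for some $\eta_0 \in (0,1)$, using the decay \eqref{shuaijianxing} of $U$ and $U'$, the decay \eqref{2--5} of $u_\varepsilon$ itself, the bound $|(1+\e^2 V(\e s)) - \ber^2|\le C\e$ coming from boundedness of $V'$ (Lipschitz-in-$\e s$ argument on the bounded region where $u_\varepsilon$ is not yet exponentially small), and the a priori $L^\infty$-bound on $\omega$ that follows from \eqref{jingxiangjiedeguji} together with \eqref{jiedexingshi2}.

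Next I would construct an explicit supersolution. Because $\ber^2 \ge \lambda_0^2 > 0$ uniformly in $\e$, the operator $-\partial_{ss} - \tfrac{n-1}{s}\partial_s + \ber^2$ admits barriers of the form $\Phi(s) = A\, e^{-\eta|s-\rho|}$ for any $\eta < \lambda_0$, provided $s$ stays bounded away from $0$; since we are in the regime $\rho \to \infty$, the curvature term $\tfrac{n-1}{s}$ is a harmless lower-order perturbation on $\{|s-\rho|>R_0\}$. Choosing $\eta \in (0,1)$ smaller than both $\eta_0$ and $\lambda_0$, and $A$ large enough to dominate $\omega$ on the compact boundary layer $\{|s-\rho| = R_0\}$ (where $|\omega| \le C$ by the $L^\infty$ bound), the maximum principle for the radial operator gives $|\omega(s)| \le \Phi(s)$ on $\{|s-\rho|>R_0\}$. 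On the complementary bounded set $\{|s-\rho|\le R_0\}$ the estimate is trivial after enlarging $A$. Translating back from $s$ to $r$ via $s = r/\e$ yields exactly $\omega_{\rho,\varepsilon}\big(\tfrac{r}{\e}\big) = O\big(e^{-\eta|r-\e\rho|/\e}\big)$ for $r \ge 0$.

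The main obstacle I anticipate is making the right-hand side bound $C e^{-\eta_0|s-\rho|}$ genuinely uniform in $\e$, and in particular controlling the nonlinear difference $(U+\omega)^p - U^p$ without a subcriticality assumption on $p$. Here one cannot simply use $L^\infty$ bounds on $U+\omega$ that are uniform in $\e$ unless one already knows $\omega$ is bounded; the resolution is the usual bootstrap: first use the $H^1$-smallness \eqref{jiedexingshi2} and the radial embedding \eqref{jingxiangjiedeguji} to get a crude polynomial-in-$r$ decay and an $L^\infty$ bound on $\omega$ of the same order as $\|U\|_{L^\infty}$, then feed that into the comparison argument, and finally iterate once more if necessary to absorb the $\omega^p$-type terms into the left-hand side on the region $|s-\rho|>R_0$ where $|\omega|$ is already small. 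A secondary delicate point is the term $((1+\e^2 V(\e s)) - \ber^2)\,u_\varepsilon$: on the unbounded region this is harmless because $u_\varepsilon$ decays exponentially by \eqref{2--5}, while on the bounded region $|\e s - \e\rho| \le C$ one uses $|V(\e s) - V(\e\rho)| \le \|V'\|_\infty |\e s - \e\rho| = O(\e)$, so this term is $O(\e)$ there and is easily dominated by the barrier. None of these steps requires more than the facts already established in the excerpt, which is why the proof can be presented as "standard."
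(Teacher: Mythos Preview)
Your barrier/comparison argument is correct and is indeed one of the ``standard arguments'' the paper alludes to (the paper gives no proof here, only a reference to \cite{gtzcv} and \cite{Gilbarg}). However, you are working harder than necessary. Observe that $\omega_{\rho,\varepsilon}\big(\tfrac{r}{\e}\big) = u_\varepsilon(r) - U_{\rho,\varepsilon}\big(\tfrac{r}{\e}-\rho\big)$ by definition, and both terms on the right already have the desired exponential decay: the first by the \emph{previous} lemma \eqref{2--5}, which gives $|u_\varepsilon(r)| \le C e^{-\eta|r-\e\rho|/\e}$ for radial $u_\varepsilon$ (choose $x_\varepsilon$ on the ray through $x$), and the second by \eqref{shuaijianxing}, which gives $|U_{\rho,\varepsilon}(s)| \le c\,e^{-\lambda_0|s|}$. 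The triangle inequality then yields the claim with $\eta' = \min(\eta,\lambda_0)\in(0,1)$ immediately, with no PDE work on $\omega$ at all.

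Your route---deriving an equation for $\omega$ and running a super/subsolution argument---is the right machinery if one does \emph{not} already have the decay of $u_\varepsilon$ in hand, and it would be the natural approach in Section~3 where $\omega$ is constructed before the full solution is known. But in Section~2 the order of the lemmas makes the short argument available, and the bootstrap and the handling of $(U+\omega)^p - U^p$ that you worry about become unnecessary.
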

\medskip

Note that if $u_\e$ is radial,  
\eqref{fangcheng1} can be rewritten as
\begin{equation}\label{eq3}
-\varepsilon^2u_\e''(r)-\frac{(n-1)\varepsilon^2 }{r} u_\e'(r)+\left(1+\varepsilon^2V(r)\right)u_\e(r)
=u_\e^p(r).
\end{equation}
\medskip
Then we have following  identity on $u_\e$ by multiplying $r^{n-1}u_\varepsilon$ on both sides of \eqref{eq3} and integrating from $0$ to $+\infty$.
\begin{Prop}
If $u_\varepsilon$ is a radial  solution to \eqref{fangcheng1}, then we establish that
\begin{equation}\label{poh1}
\varepsilon^2\int^{+\infty}_0r^{n-1} \big|u'_\varepsilon\left(r\right)|^2dr+
\int^{+\infty}_0r^{n-1} \left(1+\varepsilon^2V(r)\right)u^2_\varepsilon (r) dr=\int^{+\infty}_0r^{n-1} u^{p+1}_\varepsilon(r) dr.
\end{equation}
\end{Prop}

\medskip
Now, we estimate each term in \eqref{poh1}.
\begin{Prop}
Let $u_\varepsilon$ be a radial solution of \eqref{fangcheng1} with the form \eqref{jiedexingshi1}-\eqref{jiedexingshi2}, then it holds that
\begin{equation}\label{ll1}\begin{split}
\varepsilon^2\int^{+\infty}_0r^{n-1} \left|u'_\varepsilon(r)\right|^2dr=&
\varepsilon^2\int^{+\infty}_0  r^{n-1} \left|\zhuxianga^\prime\right|^2dr\\&+ O\left(\wucha\left\|\yuxianga\right\|_\e+\left\|\yuxianga\right\|_\e^2\right),		    
\end{split}
\end{equation}
\begin{equation}\label{ll2}\begin{split}
\int^{+\infty}_0r^{n-1} \left(1+\varepsilon^2V(r)\right)u^2_\varepsilon (r) dr=&\int^{+\infty}_0r^{n-1} \big(1+\varepsilon^2V(r)\big)\zhuxianga^2dr\\&+ O\left(\wucha\left\|\yuxianga\right\|_\e+\left\|\yuxianga\right\|_\e^2\right), 
\end{split}
\end{equation}
and		\begin{equation}
\label{ll3}\begin{split}
\int^{+\infty}_0r^{n-1} u^{p+1}_\varepsilon(r) dr=&\int^{+\infty}_0r^{n-1} \zhuxianga^{p+1}dr
\\&+ O\left(\wucha\left\|\yuxianga\right\|_\e+\left\|\yuxianga\right\|_\e^2\right).\end{split}
\end{equation}
\end{Prop}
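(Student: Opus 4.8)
\medskip

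The plan is to obtain the three identities from a single computation: insert the splitting \eqref{jiedexingshi1} into each integrand of \eqref{poh1}, expand, and verify that every contribution other than the one built from $U_{\rho,\e}$ alone is $O\!\big(\wucha\,\|\yuxianga\|_\e+\|\yuxianga\|_\e^{2}\big)$, the $U_{\rho,\e}$-term producing the stated main term. Three elementary ingredients suffice. (i) The substitution $s=\tfrac r\e-\rho$, together with the exponential decay \eqref{shuaijianxing} of $U_{\rho,\e}=Q_{\ber}$ and the bound $(\rho+s)^{n-1}\le C(\rho^{n-1}+|s|^{n-1})$, gives, for every $q\ge1$,
\be\label{auxpoh1}
\int_0^{+\infty}r^{n-1}\zhuxianga^{q}\,dr+\int_0^{+\infty}r^{n-1}\big|\e\,\zhuxianga^{\prime}\big|^{q}\,dr\le C\,\e^{n}\big(\rho^{n-1}+1\big);
\ee
in particular $\|\zhuxianga\|_\e\le C\wucha$ (also implicit in \eqref{jiedexingshi2}). (ii) By \eqref{2--5} and the preceding decay lemma, $u_\e$, hence $\zhuxianga$ and $\yuxianga$, are bounded in $L^{\infty}(\R^{+})$ uniformly in $\e$. (iii) By the definition of $\|\cdot\|_\e$, $\int_0^{+\infty}r^{n-1}\yuxianga^{2}\,dr\le\|\yuxianga\|_\e^{2}$ and $\e^{2}\int_0^{+\infty}r^{n-1}|\yuxianga^{\prime}|^{2}\,dr\le\|\yuxianga\|_\e^{2}$.

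For \eqref{ll1} and \eqref{ll2} I would expand the squares of $u_\e=\zhuxianga+\yuxianga$. In \eqref{ll1}, besides $\e^{2}\int r^{n-1}|\zhuxianga^{\prime}|^{2}$ there remain the cross term $2\e^{2}\int_0^{+\infty}r^{n-1}\zhuxianga^{\prime}\,\yuxianga^{\prime}\,dr$, which by Cauchy--Schwarz and ingredients (i), (iii) is at most $2\|\zhuxianga\|_\e\|\yuxianga\|_\e\le C\wucha\|\yuxianga\|_\e$, and $\e^{2}\int_0^{+\infty}r^{n-1}|\yuxianga^{\prime}|^{2}\,dr\le\|\yuxianga\|_\e^{2}$; this yields \eqref{ll1}. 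For \eqref{ll2} I would use additionally that the hypotheses on $V$ keep $1+\e^{2}V(r)$ uniformly bounded above and below on $\R^{+}$ for $\e$ small: then the cross term $2\int_0^{+\infty}r^{n-1}\big(1+\e^{2}V(r)\big)\zhuxianga\,\yuxianga\,dr$ is, by Cauchy--Schwarz and \eqref{auxpoh1} with $q=2$, at most $C\big(\int_0^{+\infty}r^{n-1}\zhuxianga^{2}\big)^{1/2}\|\yuxianga\|_\e\le C\wucha\|\yuxianga\|_\e$, and the quadratic term $\int_0^{+\infty}r^{n-1}\big(1+\e^{2}V(r)\big)\yuxianga^{2}\,dr$ is $\le C\|\yuxianga\|_\e^{2}$, giving \eqref{ll2}.

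For \eqref{ll3} I would invoke the pointwise Taylor inequality, valid for $a\ge0$, $a+b\ge0$ and every $p>1$,
\be\label{auxpoh2}
\big|(a+b)^{p+1}-a^{p+1}-(p+1)a^{p}b\big|\le C\big(a^{p-1}b^{2}+|b|^{p+1}\big),
\ee
applied with $a=\zhuxianga$ and $b=\yuxianga$ (admissible since $u_\e>0$). Integrating against $r^{n-1}$, the term linear in $\yuxianga$, namely $(p+1)\int_0^{+\infty}r^{n-1}\zhuxianga^{p}\,\yuxianga\,dr$, is by Cauchy--Schwarz and \eqref{auxpoh1} with $q=2p$ at most $C\big(\int_0^{+\infty}r^{n-1}\zhuxianga^{2p}\big)^{1/2}\|\yuxianga\|_\e\le C\wucha\|\yuxianga\|_\e$; the term $\int_0^{+\infty}r^{n-1}\zhuxianga^{p-1}\yuxianga^{2}\,dr$ is $\le\|\zhuxianga\|_{L^{\infty}}^{p-1}\|\yuxianga\|_\e^{2}\le C\|\yuxianga\|_\e^{2}$; and the super-quadratic term $\int_0^{+\infty}r^{n-1}|\yuxianga|^{p+1}\,dr$ is $\le\|\yuxianga\|_{L^{\infty}}^{p-1}\int_0^{+\infty}r^{n-1}\yuxianga^{2}\,dr\le C\|\yuxianga\|_\e^{2}$. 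Summing these three bounds gives \eqref{ll3}.

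The genuinely delicate point is the uniform $L^{\infty}$ bound of ingredient (ii), above all for $\yuxianga$, since this is what controls the super-quadratic term $\int r^{n-1}|\yuxianga|^{p+1}$: for $1<p\le\tfrac{n+2}{n-2}$ it follows from the elliptic bootstrap behind \eqref{2--5} and the preceding lemma, whereas in the energy-supercritical range it has to be recovered after truncating the nonlinearity, as the construction does later; one should also confirm that the hypotheses on $V$ genuinely keep $1+\e^{2}V(r)$ bounded on $\R^{+}$, which is clear for bounded $V$ and for the potentials entering the construction. Everything else is routine Cauchy--Schwarz, the Taylor estimate \eqref{auxpoh2}, and the scaling bound \eqref{auxpoh1}; the last is the only place where the exact shape $\wucha$ of the error enters, as it records that the volume weight $r^{n-1}$, concentrated near $r=\e\rho$, contributes the factor $\e^{n}(\rho^{n-1}+1)\sim(\wucha)^{2}$.
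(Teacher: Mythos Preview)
Your proposal is correct and follows essentially the same approach as the paper: expand the squares (resp.\ Taylor-expand $(a+b)^{p+1}$), control cross terms by Cauchy--Schwarz together with $\|\zhuxianga\|_\e\le C\wucha$, and control higher-order terms using the uniform $L^\infty$ bounds on $\zhuxianga$ and $\yuxianga$ supplied by \eqref{2--5} and the decay lemma. The only cosmetic difference is that, for the linear term in \eqref{ll3}, the paper bounds $\int r^{n-1}\zhuxianga^{p}\yuxianga$ by first using $|\zhuxianga|^{p}\le C\,\zhuxianga$ (since $U_{\rho,\e}$ is bounded) and then Cauchy--Schwarz, whereas you apply Cauchy--Schwarz directly and invoke \eqref{auxpoh1} with $q=2p$; both give the same estimate.
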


\begin{proof}
First, it is easy to find that
\begin{equation}\label{ll1d}
\begin{split}
\varepsilon^2\int^{+\infty}_0r^{n-1} \big|u'_\varepsilon\left(r\right)|^2dr=&\varepsilon^2\int^{+\infty}_0r^{n-1} \left|\zhuxianga^\prime\right|^2dr+\varepsilon^2\int^{+\infty}_0 r^{n-1}{\left(\yuxianga\right)^\prime}^2 dr\\&+2\varepsilon^2\int^{+\infty}_0 r^{n-1} \zhuxianga^\prime\left(\yuxianga\right)^\prime dr
.
\end{split}\end{equation}
Applying H\"older's inequality, we derive that
\begin{equation}\label{ll1da}\begin{split}
\varepsilon^2\int^{+\infty}_0 r^{n-1} \zhuxianga^\prime\left(\yuxianga\right)^\prime dr
&=O\Big(\left\|\zhuxiang\right\|_\varepsilon\cdot
\left\|\yuxianga\right\|_\varepsilon\Big)\\
&=O\left(\wucha\left\|\yuxianga\right\|_\varepsilon\right),  
\end{split}
\end{equation}
and \begin{equation}\label{8-26-2}
\varepsilon^2\int^{+\infty}_0 r^{n-1}{\left(\yuxianga\right)^\prime}^2 dr=O\Big(\left\|\yuxianga\right\|_\varepsilon^2\Big).
\end{equation}
Thus, \eqref{ll1} is a direct consequence of \eqref{ll1d}, \eqref{ll1da} and \eqref{8-26-2}.

\vskip 0.1cm

Analogous to the computations for \eqref{ll1}, we obtain that
\begin{equation*}\begin{split}
\int^{+\infty}_0r^{n-1} \left(1+\varepsilon^2V(r)\right)u^2_\varepsilon (r) dr=&\int^{+\infty}_0r^{n-1} \big(1+\varepsilon^2V(r)\big)\zhuxianga^2dr\\&+ O\left(\wucha\left\|\yuxianga\right\|_\e+\left\|\yuxianga\right\|_\e^2\right). 
\end{split}
\end{equation*}

Now for $\displaystyle\int^{+\infty}_0r^{n-1} u^{p+1}_\varepsilon(r) dr$, it holds then
\begin{equation}\label{llh1}
\begin{split}
\int^{+\infty}_0r^{n-1} u^{p+1}_\varepsilon(r) dr=&\int^{+\infty}_0r^{n-1} \zhuxianga^{p+1}dr\\&+(p+1)\int^{+\infty}_0 r^{n-1}\zhuxianga^p\yuxianga dr\\&
+
O\left(\int^{+\infty}_0 r^{n-1}\left(\zhuxianga^{p-1}\yuxianga^2+\left|\yuxianga\right|^{p+1} \right)dr\right).
\end{split}
\end{equation}
Since $\zhuxiang$ is bounded, we have
\begin{equation}\label{llh2}\begin{split}
\int^{+\infty}_0 r^{n-1}\zhuxianga^p\yuxianga dr&= O\left(\left\|\zhuxiang\right\|_\varepsilon\cdot
\left\|\yuxianga\right\|_\varepsilon\right)\\&=O\left(\wucha\left\|\yuxianga\right\|_\varepsilon\right).\end{split}
\end{equation}
It follows from \eqref{2--5} that
$\zhuxiang$ and  $\yuxianga$ are uniformly bounded for any $r\geq 0$,
\begin{equation}\label{llh3}
\begin{split}
\int^{+\infty}_0 r^{n-1}\left(\zhuxianga^{p-1}\yuxianga^2+\left|\yuxianga\right|^{p+1} \right)dr=O\left(\left\|\yuxianga\right\|_\e^2\right).
\end{split}\end{equation}
Finally, \eqref{ll3} is a direct consequence of \eqref{llh1}, \eqref{llh2} and \eqref{llh3}.
\end{proof}
\medskip

From the above analysis, we get an elementary necessary condition as follows, which is an important constitution of Theorem \ref{thm0}.
\begin{Lem}\label{Propa}
Let $u_\varepsilon(r)$ be a radial solution of \eqref{fangcheng1} with the form \eqref{jiedexingshi1}-\eqref{jiedexingshi2}.
There holds that
\begin{equation}\label{r/epsilon}
\rho\rightarrow +\infty,~\mbox{as}~\varepsilon\rightarrow 0.
\end{equation}

%
%
%

\end{Lem}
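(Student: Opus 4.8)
The plan is to argue by contradiction. Suppose \eqref{r/epsilon} fails; then there are $\e_k\to0$ and a constant $C>0$ with $\rho(\e_k)\le C$ for all $k$, and I will derive a contradiction with \eqref{poh1}. First I would insert the expansions \eqref{ll1}, \eqref{ll2}, \eqref{ll3} into the Pohozaev identity \eqref{poh1}. Since $\|\yuxianga\|_\e=o(\wucha)$ by \eqref{jiedexingshi2}, each remainder $O\big(\wucha\,\|\yuxianga\|_\e+\|\yuxianga\|_\e^2\big)$ is $o(\e^n\rho^{n-1}+\e^n)$, so \eqref{poh1} collapses to
\[
\e^2\!\int_0^{\infty}\! r^{n-1}\big|\zhuxianga^{\prime}\big|^2dr+\int_0^{\infty}\! r^{n-1}\big(1+\e^2V(r)\big)\zhuxianga^2dr-\int_0^{\infty}\! r^{n-1}\zhuxianga^{p+1}dr=o(\e^n\rho^{n-1}+\e^n).
\]
Changing variables $s=\tfrac r\e-\rho$ (so that $r^{n-1}dr=\e^{n}(s+\rho)^{n-1}ds$, while the prefactor $\e^2$ cancels the $\e^{-2}$ produced by $\tfrac{d}{dr}U_{\rho,\e}(\tfrac r\e-\rho)=\tfrac1\e U_{\rho,\e}'$) and dividing by $\e^n$, this reads
\[
\int_{-\rho}^{\infty}(s+\rho)^{n-1}\Big[\,|U_{\rho,\e}'(s)|^2+\big(1+\e^2V(\e(s+\rho))\big)U_{\rho,\e}(s)^2-U_{\rho,\e}(s)^{p+1}\Big]\,ds=o(\rho^{n-1}+1).
\]

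Next I would simplify the bracket using the equation \eqref{U_rhovar}. Multiplying $-U_{\rho,\e}''+\ber^2U_{\rho,\e}=U_{\rho,\e}^p$ by $U_{\rho,\e}$ gives $U_{\rho,\e}^{p+1}=\ber^2U_{\rho,\e}^2-U_{\rho,\e}''U_{\rho,\e}$, and since $\ber^2=1+\e^2V(\e\rho)$ the bracket becomes $(U_{\rho,\e}'U_{\rho,\e})'+\e^2\big(V(\e(s+\rho))-V(\e\rho)\big)U_{\rho,\e}^2$. In the regime $\rho\le C$, boundedness of $V'$ gives $\ber^2=1+\e^2V(\e\rho)\to1$ uniformly in $\rho\le C$, so $\ber$ stays in a fixed compact subset of $(0,+\infty)$; by \eqref{shuaijianxing} and the scaling $U_{\rho,\e}(s)=\ber^{2/(p-1)}Q(\ber s)$, the functions $U_{\rho,\e},U_{\rho,\e}',U_{\rho,\e}''$ are then uniformly bounded and uniformly exponentially decaying. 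Combining this with $|V(\e(s+\rho))-V(\e\rho)|\le\|V'\|_\infty\e|s|$, the correction term contributes only $O(\e^3)$, and the identity becomes
\[
\int_{-\rho}^{\infty}(s+\rho)^{n-1}\big(U_{\rho,\e}'U_{\rho,\e}\big)'(s)\,ds=o(\rho^{n-1}+1)+O(\e^3).
\]

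The final step is a uniform positivity estimate for the left-hand side when $\rho\le C$. Writing $(U_{\rho,\e}'U_{\rho,\e})'=\tfrac12(U_{\rho,\e}^2)''$ and integrating by parts — twice when $n\ge3$, once when $n=2$ — all boundary contributions at $+\infty$ vanish by exponential decay and those at $s=-\rho$ vanish because $(s+\rho)^{n-1}=0$ there (and, for $n\ge3$, also $(s+\rho)^{n-2}=0$); what remains is $\tfrac12\,U_{\rho,\e}(-\rho)^2$ if $n=2$ and $\tfrac{(n-1)(n-2)}{2}\int_{-\rho}^{\infty}(s+\rho)^{n-3}U_{\rho,\e}(s)^2\,ds$ if $n\ge3$. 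Using $U_{\rho,\e}(s)=\ber^{2/(p-1)}Q(\ber s)$ with $\ber$ confined to a fixed compact subset of $(0,+\infty)$, the positivity and continuity of $Q$, and the finiteness and strict positivity of $\int_0^{\infty}t^{n-3}Q(t)^2\,dt$ for $n\ge3$, one checks that this quantity is bounded below by a constant $c_n>0$ that does not depend on $\e$ or on $\rho\le C$. Hence, along $\e=\e_k$, the left-hand side is $\ge c_n-O(\e_k^3)\ge c_n/2>0$, whereas the right-hand side is $o(\rho(\e_k)^{n-1}+1)=o(1)\to0$; this contradiction proves \eqref{r/epsilon}.

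I expect the only genuinely delicate point to be this last estimate: obtaining a lower bound that is \emph{uniform} in $\e$ and in the bounded parameter $\rho$, and simultaneously checking that the corrections generated by the potential (through $V(\e(s+\rho))-V(\e\rho)$) and by $\ber$ are of strictly lower order than $o(\rho^{n-1}+1)$. The substitution into \eqref{poh1}, the change of variables, and the integrations by parts are routine once the bookkeeping of the $o(\cdot)$ terms coming from \eqref{jiedexingshi2} has been carried out.
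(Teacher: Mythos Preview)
Your proof is correct and follows essentially the same route as the paper: both combine \eqref{poh1} with the expansions \eqref{ll1}--\eqref{ll3} to reduce to the main profile, then use the ODE \eqref{U_rhovar} to arrive at the key quantity $\int_0^\infty r^{n-2}UU'\,dr$ (equivalently, after your change of variables and one integration by parts, $-(n-1)\int_{-\rho}^\infty(s+\rho)^{n-2}U'U\,ds$), which must be $o(\rho^{n-1}+1)$. The only real difference is in the endgame for $n\ge3$: the paper expands $(s+\rho)^{n-3}$ binomially and bounds each coefficient $a_i=\binom{n-3}{i}\int_{-\ber\rho}^\infty t^iQ^2\,dt$ from below separately, whereas you integrate by parts once more and invoke the direct positivity of $\int_{-\rho}^\infty(s+\rho)^{n-3}U_{\rho,\e}^2\,ds$; your version is cleaner and avoids the even/odd case split. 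One small inaccuracy in your write-up: the integral you actually need to bound below is $\int_0^\infty\tau^{n-3}U_{\rho,\e}(\tau-\rho)^2\,d\tau$, not $\int_0^\infty t^{n-3}Q(t)^2\,dt$, but your compactness/continuity argument (in $\rho\in[0,C]$ and $\ber$ near $1$) applies to the correct integral just the same.
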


\begin{proof}
From  \eqref{poh1}, \eqref{ll1}, \eqref{ll2} and \eqref{ll3}, we infer
\begin{align*}
\begin{split}
&\varepsilon^2\int^{+\infty}_0 r^{n-1} \left|\zhuxianga^\prime\right|^2dr+
\int^{+\infty}_0r^{n-1} \big(1+\varepsilon^2V(r)\big)\zhuxianga^2dr\\
=&\int^{+\infty}_0r^{n-1} \zhuxianga^{p+1}dr
+o\big(\e^n\rho^{n-1}+\e^n\big),
\end{split}\end{align*}
which and \eqref{zhuxiangfangcheng} imply that
\begin{equation}\label{rN-2}
\begin{split}
\varepsilon^2\int^{+\infty}_0r^{n-2} \zhuxianga\zhuxianga^\prime dr
=o\left(\e^n\rho^{n-1}+\e^n\right).\end{split}\end{equation}

(1) When $n=2$, it follows from \eqref{rN-2} that
\begin{equation}\label{8-14-10}
\begin{split}
U_{\rho,\e}^2\left(\rho\right)=o\left(\rho+1\right).
\end{split}\end{equation}
If by contradiction $\rho\leq C_0$, then
\eqref{8-14-10} gives us that, for some positive constant $C$, $Q^2(C)
=0$, which is impossible. Hence we deduce \eqref{r/epsilon}.

(2) When $n\geq3$, from \eqref{rN-2} we get then
\begin{equation}\label{rN-2'}
\begin{split}
&\varepsilon^2\int^{+\infty}_0r^{n-2} \zhuxianga\zhuxianga^\prime dr\\
=&-\frac{(n-2)\varepsilon^3}{2}\beta_{\e,\rho}^{\frac{4}{p-1}-1}\int^{+\infty}_{(-\rho)\beta_{\e,\rho}}
\left(\frac{\varepsilon t}{\beta_{\e,\rho}}+\e\rho\right)^{n-3}Q^2(t)dt
\\
=&-\frac{(n-2)\varepsilon^3}{2}\beta_{\e,\rho}^{\frac{4}{p-1}-1}\int^{+\infty}_{(-\rho)\beta_{\e,\rho}}\sum_{i=0}^{n-3}{n-3\choose i}\left(\frac{\e t}{\beta_{\e,\rho}}\right)^i(\e\rho)^{n-3-i}
Q^2(t)dt\\
=&o\left(\e^n\rho^{n-1}+\e^n\right),
\end{split}
\end{equation}
where $\beta_{\e,\rho}:=\left(1+\e^2V(\e\rho)\right)^{\frac{1}{2}}$.
Obviously, \eqref{rN-2'} implies that there exists some constants $a_i,i=0,1,\ldots,n-3$, such that
$$(-1)\sum_{i=0}^{n-3}a_i\rho^{n-3-i}=o(\rho^{n-1}+1),$$
with
$$a_{i}:={n-3\choose i}\int_{(-\rho)\beta_{\e,\rho}}^{+\infty}Q^{2}(t)t^{i}dt,\ \ i=0,1,\dots,n-3.$$
We note that, if $i$ is even, then $a_i\geq C_i$. On the other hand, if $i$ is odd, we get
$$a_{i}\left\{
\begin{array}{ll}
\to 0,~~~\hbox{if}~~\rho\to +\infty,\\
\geq C_i^{\prime},~~~\hbox{otherwise.}\\
\end{array}
\right.$$
In the above, $C_i$ and $C_i^\prime$ are  positive constants independent of $\e$. This rules out the possibility of $\rho\leq C_0$ and concludes \eqref{r/epsilon}.
\end{proof}
\medskip

Now, we establish another Pohozaev identity on $u_\e(r)$.
\begin{Prop}
If $u_\varepsilon$ be a radial solution of \eqref{fangcheng1}, then it holds that
\begin{equation}\label{a8-10-6}
\varepsilon^2\int^{+\infty}_0r^{n-1} \big|u'_\varepsilon(r)\big|^2dr-\frac{\varepsilon^2}{2}
\int^{+\infty}_0r^{n}  V'(r)u^2_\varepsilon (r)dr= n\left(\frac12-\frac1{p+1}\right)\int^{+\infty}_0r^{n-1}  u^{p+1}_\varepsilon (r)dr.
\end{equation}
\end{Prop}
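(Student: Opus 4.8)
The plan is to derive this second Pohozaev-type identity by testing the radial equation \eqref{eq3} against the generator of dilations $r^{n}u_{\e}'(r)$, and then to eliminate the potential integral $\int r^{n-1}(1+\e^{2}V)u_{\e}^{2}$ by means of the first identity \eqref{poh1}. Throughout I use that, by \eqref{2--5} together with standard interior elliptic estimates for \eqref{fangcheng1}, $u_{\e}$ is a classical solution with $u_{\e}$ and $u_{\e}'$ decaying exponentially as $r\to+\infty$ and bounded near the origin.

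First I would multiply \eqref{eq3} by $r^{n}u_{\e}'(r)$ and integrate over $(0,+\infty)$, treating the three terms by integration by parts. For the principal part, $r^{n}u_{\e}''u_{\e}'=\tfrac12 r^{n}\frac{d}{dr}\big((u_{\e}')^{2}\big)$ contributes $\tfrac n2\e^{2}\int r^{n-1}|u_{\e}'|^{2}$, which combined with the $-\frac{(n-1)\e^{2}}{r}u_{\e}'$ term $-(n-1)\e^{2}\int r^{n-1}|u_{\e}'|^{2}$ yields the coefficient $\tfrac n2-(n-1)=\tfrac{2-n}{2}$. For the potential term, $r^{n}(1+\e^{2}V)u_{\e}u_{\e}'=\tfrac12 r^{n}(1+\e^{2}V)\frac{d}{dr}(u_{\e}^{2})$ together with $\frac{d}{dr}\big(r^{n}(1+\e^{2}V)\big)=nr^{n-1}(1+\e^{2}V)+\e^{2}r^{n}V'$ produces $-\tfrac n2\int r^{n-1}(1+\e^{2}V)u_{\e}^{2}-\tfrac{\e^{2}}{2}\int r^{n}V'u_{\e}^{2}$. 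For the nonlinearity, $r^{n}u_{\e}^{p}u_{\e}'=\tfrac1{p+1}r^{n}\frac{d}{dr}(u_{\e}^{p+1})$ gives $-\tfrac n{p+1}\int r^{n-1}u_{\e}^{p+1}$. Collecting, one obtains the raw identity
\[
\frac{2-n}{2}\,\e^{2}\!\int_{0}^{+\infty}\! r^{n-1}\big|u_{\e}'\big|^{2}dr-\frac n2\!\int_{0}^{+\infty}\! r^{n-1}\big(1+\e^{2}V\big)u_{\e}^{2}\,dr-\frac{\e^{2}}{2}\!\int_{0}^{+\infty}\! r^{n}V'u_{\e}^{2}\,dr=-\frac n{p+1}\!\int_{0}^{+\infty}\! r^{n-1}u_{\e}^{p+1}\,dr.
\]

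All the boundary terms from these integrations by parts vanish: at $r\to+\infty$ because $r^{n}|u_{\e}'|^{2}$, $r^{n}u_{\e}^{2}$ and $r^{n}u_{\e}^{p+1}$ tend to $0$ by exponential decay, and at $r=0$ because the weights $r^{n-1},r^{n}$ kill the boundary contributions for $n\ge1$ while $u_{\e},u_{\e}'$ remain bounded (alternatively one carries out the computation on $[\delta,R]$ and lets $\delta\to0$, $R\to+\infty$). Finally I would substitute $\int_{0}^{+\infty}r^{n-1}(1+\e^{2}V)u_{\e}^{2}\,dr=\int_{0}^{+\infty}r^{n-1}u_{\e}^{p+1}\,dr-\e^{2}\int_{0}^{+\infty}r^{n-1}|u_{\e}'|^{2}\,dr$, read off from \eqref{poh1}, into the displayed identity: the coefficient of $\e^{2}\int r^{n-1}|u_{\e}'|^{2}$ becomes $\tfrac{2-n}{2}+\tfrac n2=1$, and the two $u_{\e}^{p+1}$ terms merge into $\big(\tfrac n2-\tfrac n{p+1}\big)\int r^{n-1}u_{\e}^{p+1}=n\big(\tfrac12-\tfrac1{p+1}\big)\int r^{n-1}u_{\e}^{p+1}$, which is precisely \eqref{a8-10-6}.

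The computation itself is entirely routine; the only step needing genuine care is the justification that the boundary contributions at $0$ and $+\infty$ vanish, i.e.\ the regularity of $u_{\e}$ near the origin and the decay of $u_{\e}$ and $u_{\e}'$ at infinity. Both are already available — the decay from \eqref{2--5}, the regularity and gradient bound from elliptic bootstrap on \eqref{fangcheng1} — so I anticipate no real obstacle here.
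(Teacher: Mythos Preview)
Your proof is correct and follows essentially the same approach as the paper: multiply \eqref{eq3} by $r^{n}u_{\e}'$, integrate by parts to obtain the intermediate identity (which, after a sign flip, is exactly the paper's \eqref{b8-10-6}), and then combine with \eqref{poh1} to eliminate the potential term. Your write-up is in fact more detailed than the paper's, in particular in spelling out the integration-by-parts coefficients and the vanishing of the boundary terms.
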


\begin{proof}
Multiplying $r^{n}u'_\varepsilon$ on both sides of \eqref{eq3} and integrating from $0$ to $+\infty$, we get
\begin{equation*}
\int^{+\infty}_0r^{n}\left(-\varepsilon^2\left(u''_\varepsilon+\frac{n-1}{r} u'_\varepsilon\right)
+\left(1+\varepsilon^2V(r)\right)u_\varepsilon
-u_\varepsilon^p\right)u'_{\varepsilon}dr=0,
\end{equation*}
which gives
\begin{align}\label{b8-10-6}
\begin{split}
	&\frac{\varepsilon^2(n-2)}{2}\int^{+\infty}_0r^{n-1} \big|u'_\varepsilon(r)\big|^2dr+\frac{\varepsilon^2}2\int^{+\infty}_0r^{n}V'(r)u^2_\varepsilon dr+\frac n2
	\int^{+\infty}_0r^{n-1} \big(1+\varepsilon^2V(r)\big)u^2_\varepsilon dr\\
	=&
	\frac n{p+1}\int^{+\infty}_0r^{n-1}  u^{p+1}_\varepsilon dr.
\end{split}\end{align}
Then combining
\eqref{poh1} and \eqref{b8-10-6}, we find \eqref{a8-10-6}.
\end{proof}

\medskip

We next conduct a more precise estimate of each term appearing in \eqref{a8-10-6}.
\begin{Prop}
Let $u_\varepsilon(r)$ be a radial solution of \eqref{fangcheng1} with the form \eqref{jiedexingshi1}-\eqref{jiedexingshi2}, then it follows that
\begin{equation}\label{a8-10-3}
\begin{split}
	\varepsilon^2\int^{+\infty}_0r^{n-1} \big|u'_{\varepsilon}(r)\big|^2dr=&
	2A\ber^{\frac{p+3}{p-1}}\e^n\rho^{n-1}+o(\e^n\rho^{n-1}),
\end{split}\end{equation}
\begin{equation}\label{8-10-4}
\begin{split}
	\frac{\varepsilon^2}{2}\int^{+\infty}_0r^{n}V'(r)u_{\varepsilon}^2(r)dr=&
	\frac{p+3}{p-1}A\ber^{\frac{4}{p-1}-1}\e^{3+n}\rho^nV^\prime(\e\rho)+o(\e^{3+n}\rho^n),
\end{split}\end{equation}
\begin{equation}\label{8-10-422}
\begin{split}
	\int^{+\infty}_0&r^{n-1}\big(1+\varepsilon^2V(r)\big)  u^2_{\varepsilon}(r)dr=
	\frac{2(p+3)}{p-1} A\varepsilon^n\rho^{n-1} \ber^{\frac{4}{p-1}-1}+o\big(\e^n\rho^{n-1}\big),
\end{split}\end{equation}

and
\begin{equation}\label{8-14-5}
\begin{split}
	\int^{+\infty}_0r^{n-1} u^{p+1}_\varepsilon(r) dr=\frac{4(p+1)}{p-1} A\ber^{\frac{p+3}{p-1}}\e^n\rho^{n-1}+o\big(\e^n\rho^{n-1}\big),
\end{split}\end{equation}
where  $A:=\displaystyle\int^{+\infty}_0 |Q'(r)|^2dr$.
\end{Prop}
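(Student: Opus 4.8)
The plan is to replace, in each of the four integrals, the full radial solution $u_\varepsilon$ by its leading profile $\zhuxiang$, and then to evaluate the resulting one-dimensional integrals explicitly by means of the scaling relation $U_{\rho,\varepsilon}(t)=\ber^{2/(p-1)}Q(\ber t)$ together with the Pohozaev identities \eqref{pohU}. For the quantities in \eqref{a8-10-3}, \eqref{8-10-422} and \eqref{8-14-5} this first step has in fact already been performed: these are precisely the left-hand sides of \eqref{ll1}, \eqref{ll2} and \eqref{ll3} (with the weight $1+\e^2V(r)=1+O(\e^2)$ in place of $1$ in \eqref{8-10-422}, which is harmless since $V$ is bounded), and by \eqref{jiedexingshi2} the error $O\!\big(\wucha\,\|\yuxianga\|_\e+\|\yuxianga\|_\e^2\big)$ there is $o(\e^n\rho^{n-1})$, since $\rho\to+\infty$ by Lemma \ref{Propa}. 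For the $V'$-term \eqref{8-10-4} I would carry out the analogous reduction by hand: expanding $u_\varepsilon^2=\zhuxianga^2+2\,\zhuxiang\,\yuxianga+\left(\yuxianga\right)^2$, using the exponential bound \eqref{2--5} and its counterpart for $\omega_{\rho,\varepsilon}$ to localise the integral at scale $\e$ around $r=\e\rho$, using $|V'|\le C$, and absorbing the cross and quadratic terms via $\|\yuxianga\|_\e=o(\wucha)$; this last bound also gives $\int_0^{+\infty}\omega_{\rho,\varepsilon}^2(s)\,ds=o(1)$ after the substitution $s=r/\e$ (from $\e^n\int_0^{+\infty}s^{n-1}\omega_{\rho,\varepsilon}^2\,ds\le\|\yuxianga\|_\e^2$ and the concentration of $\omega_{\rho,\varepsilon}$ near $s=\rho$ with $\rho\to+\infty$), which is exactly what is needed for $\frac{\e^2}{2}\int_0^{+\infty}r^nV'(r)u_\varepsilon^2\,dr=\frac{\e^2}{2}\int_0^{+\infty}r^nV'(r)\zhuxianga^2\,dr+o(\e^{3+n}\rho^n)$.

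It then remains to evaluate the four integrals with $u_\varepsilon$ replaced by $\zhuxiang$. Substituting $t=\frac{r}{\e}-\rho$ turns $\int_0^{+\infty}r^{n-1}f\!\left(\frac{r}{\e}-\rho\right)dr$ into $\e^n\int_{-\rho}^{+\infty}(t+\rho)^{n-1}f(t)\,dt$, where $f$ stands for $\left|U_{\rho,\varepsilon}'\right|^2$, $U_{\rho,\varepsilon}^2$ or $U_{\rho,\varepsilon}^{p+1}$, and turns $\frac{\e^2}{2}\int_0^{+\infty}r^nV'(r)U_{\rho,\varepsilon}^2\!\left(\frac{r}{\e}-\rho\right)dr$ into $\frac{\e^{n+3}}{2}\int_{-\rho}^{+\infty}(t+\rho)^nV'(\e(t+\rho))U_{\rho,\varepsilon}^2(t)\,dt$. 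Expanding $(t+\rho)^{n-1}=\rho^{n-1}\sum_{k=0}^{n-1}\binom{n-1}{k}(t/\rho)^k$ (and $(t+\rho)^n$ similarly), the term $k=0$ produces the leading contribution $\rho^{n-1}\int_{\R}f(t)\,dt$, the terms with $k\ge1$ are smaller by a factor $\rho^{-1}$, and replacing the lower limit $-\rho$ by $-\infty$ costs only an exponentially small error; here the exponential bound \eqref{shuaijianxing} for $Q_\lambda$---equivalently for $U_{\rho,\varepsilon}$, with $\lambda=\ber\ge\lambda_0$---makes all the $t$-integrals that occur uniformly convergent in $\e$. In \eqref{8-10-422} I would also drop $\e^2V(r)$ against $1$ (an $O(\e^2)$ relative perturbation), and in \eqref{8-10-4} I would pull the slowly varying factor $V'(\e(t+\rho))$ out as $V'(\e\rho)$. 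Finally, inserting $U_{\rho,\varepsilon}(t)=\ber^{2/(p-1)}Q(\ber t)$ and rescaling once more yields $\int_{\R}U_{\rho,\varepsilon}^2=\ber^{\frac{4}{p-1}-1}\int_{\R}Q^2$, $\int_{\R}\left|U_{\rho,\varepsilon}'\right|^2=\ber^{\frac{p+3}{p-1}}\int_{\R}|Q'|^2$ and $\int_{\R}U_{\rho,\varepsilon}^{p+1}=\ber^{\frac{p+3}{p-1}}\int_{\R}Q^{p+1}$; using the evenness of $Q$ together with \eqref{pohU}, i.e.\ $\int_{\R}Q^2=\frac{2(p+3)}{p-1}A$, $\int_{\R}|Q'|^2=2A$ and $\int_{\R}Q^{p+1}=\frac{4(p+1)}{p-1}A$, and collecting the constants, one recovers precisely \eqref{a8-10-3}, \eqref{8-10-4}, \eqref{8-10-422} and \eqref{8-14-5}.

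The binomial bookkeeping and the $\omega_{\rho,\varepsilon}$-error estimates above are routine and run entirely parallel to the proof of \eqref{ll1}--\eqref{ll3}. The one step that genuinely requires care---and which I expect to be the main obstacle---is pulling $V'(\e(t+\rho))$ out as $V'(\e\rho)$ in \eqref{8-10-4}: since only boundedness and continuity of $V'$ are available, $V'$ cannot be Taylor expanded, so instead I would split $\int_{\R}=\int_{|t|\le R}+\int_{|t|>R}$ in the rescaled variable, bound the tail $|t|>R$ by $O\!\big(\rho^n e^{-2\lambda_0 R}\big)$ via \eqref{shuaijianxing} and make it $\le\de\rho^n$ by taking $R$ large, and on the bounded range $|t|\le R$ use $\e(t+\rho)-\e\rho=O(\e R)\to0$ together with the (uniform) continuity of $V'$ to replace $V'(\e(t+\rho))$ by $V'(\e\rho)+o(1)$. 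This localisation is precisely where the structural hypotheses on $V$ enter; it is the higher-dimensional analogue of the corresponding two-dimensional step in \cite{gtzcv}, and once it is in place the remainder is the elementary chain of rescalings described above.
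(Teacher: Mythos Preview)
Your proposal is correct and follows essentially the same route as the paper: reduce each integral to the $U_{\rho,\varepsilon}$ contribution via \eqref{ll1}--\eqref{ll3} (and the analogous Hölder estimates \eqref{8-26-5}--\eqref{8-26-6} for the $V'$ term), change variables $t=r/\varepsilon-\rho$, expand $(t+\rho)^{n-1}$ binomially to extract the leading $\rho^{n-1}$ term, rescale $U_{\rho,\varepsilon}=\ber^{2/(p-1)}Q(\ber\,\cdot)$, and finish with \eqref{pohU}. For \eqref{8-10-4} the paper likewise writes $V'(r)=V'(\varepsilon\rho)+(V'(r)-V'(\varepsilon\rho))$ and asserts the remainder is $o(\varepsilon^{n+3}\rho^{n})$; your near/far split in $t$ is simply a more explicit version of that step.
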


\begin{proof}Combining with $\rho\to+\infty$, 
we estimate that
\begin{equation}\label{8-26-1}
\begin{split}
&\varepsilon^2\int^{+\infty}_0r^{n-1}\left|\zhuxianga^\prime\right|^2dr\\=&
	\varepsilon \ber^{\frac{p+3}{p-1}} \int^{+\infty}_{(-\rho)\ber}\left(\e\rho+\frac{\varepsilon t}{\ber}\right)^{n-1}\big|Q'(t)\big|^2dt\\=&\displaystyle\sum\limits_{k=0}^{n-1}{n-1\choose k}(\e\rho)^{n-1-k}
	\varepsilon^{k+1} \ber^{\frac{p+3}{p-1}-k} \int^{+\infty}_{(-\rho)\ber}
	t^k\big|Q'(t)\big|^2dt
	\\=&
	2A \ber^{\frac{p+3}{p-1}}\e^n\rho^{n-1}+o\left( \e^n\rho^{n-1}\right).
\end{split}\end{equation}
Therefore \eqref{a8-10-3} follows directly from \eqref{jiedexingshi1}-\eqref{jiedexingshi2}, \eqref{ll1} and \eqref{8-26-1}.

\vskip 0.1cm

Following the same computation as in \eqref{8-26-1}, we obtain the subsequent estimate using \eqref{pohU},
\begin{equation}\label{8-26-4}
\begin{split}
	&\frac{\e^2}{2}\int^{+\infty}_0r^{n}V'(r) \zhuxianga^2dr\\=&\frac{\e^2}{2}\int^{+\infty}_0r^{n}V'(\e\rho) \zhuxianga^2dr+\frac{\e^2}{2}\int^{+\infty}_0r^{n}\left(V'(r)-V^\prime(\e\rho)\right) \zhuxianga^2dr\\=&\frac{p+3}{p-1}A\e^{n+3}\rho^n\ber^{\frac{4}{p-1}-1}V^\prime(\e\rho)+o(\e^{n+3}\rho^n)
	.
\end{split}
\end{equation}
Then by H\"older's inequality, we have
\begin{equation}\label{8-26-5}
\begin{split}
	&\int^{+\infty}_0r^{n}V'(r) \zhuxiang\yuxianga dr \\=&
	O\left(\left(\int^{+\infty}_0r^{n+1}\big|V'(r) \big|^2 \zhuxianga^2dr\right)^{\frac{1}{2}}\left\|\yuxianga\right\|_\varepsilon\right) \\
	=&O\left(\left(\e\rho+\varepsilon\right)^{\frac{n+1}{2}} \varepsilon^{\frac{1}{2}}\left\|\yuxianga\right\|_\varepsilon\right),
\end{split}
\end{equation}
and
\begin{equation}\label{8-26-6}
\begin{split}
	\int^{+\infty}_0r^{n}V'(r) \yuxianga^2dr=
	O\Big(\big(\e\rho+ \varepsilon\big)\left\|\yuxianga\right\|^2_\varepsilon\Big).
\end{split}
\end{equation}
Thus, \eqref{8-26-4}, \eqref{8-26-5} and \eqref{8-26-6} conjointly lead to \eqref{8-10-4}.

\vskip 0.1cm

Finally, we calculate
\begin{equation}\label{8-26-411}
\begin{split}
	\int^{+\infty}_0r^{n-1} (1+\e^2V(r))\zhuxianga^2 dr=\frac{2(p+3)}{p-1} A\varepsilon^n\rho^{n-1} \ber^{\frac{4}{p-1}+1}+o\big(\e^n\rho^{n-1}\big),
\end{split}
\end{equation}
and
\begin{equation}\label{a8-26-7}
\begin{split}
	\int^{+\infty}_0r^{n-1}\zhuxianga^{p+1}dr=
	\frac{4(p+1)}{p-1} A\ber^{\frac{p+3}{p-1}}\e^n\rho^{n-1}+o\big(\e^n\rho^{n-1}\big).
\end{split}
\end{equation}
Analogously, we deduce \eqref{8-10-422} from the combination of \eqref{pohU}, \eqref{ll2}, and \eqref{8-26-411}. Additionally, \eqref{8-14-5} holds by \eqref{pohU},  \eqref{ll3} and \eqref{a8-26-7}.
\end{proof}
\smallskip

\begin{Lem}\label{Propa11}
Let $u_\varepsilon(r)$ be a radial solution of \eqref{fangcheng1} with \eqref{jiedexingshi1}-\eqref{jiedexingshi2}.
There holds that
\begin{equation*}
\e\rho\rightarrow +\infty,~\mbox{as}~\varepsilon\rightarrow 0.
\end{equation*}
\end{Lem}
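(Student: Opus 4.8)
The plan is to argue by contradiction, pushing the strategy of Lemma~\ref{Propa} one order further: instead of the first Pohozaev identity \eqref{poh1}, I would feed the \emph{refined} estimates \eqref{a8-10-3}, \eqref{8-10-4} and \eqref{8-14-5} into the second Pohozaev identity \eqref{a8-10-6}. Suppose $\e\rho\not\to+\infty$; then there are $C_0>0$ and a sequence $\e_k\to0$ with $\e_k\rho(\e_k)\le C_0$. Along this sequence $\rho\to+\infty$ by Lemma~\ref{Propa}, so all the estimates above are available; moreover $\ber^{2}=1+\e^{2}V(\e\rho)\to1$ (because $V\in C^{1}$ is bounded on the compact set $[0,C_0]$ and $\e^{2}\to0$), and $V'(\e\rho)$ stays bounded since $V'$ is bounded.

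The key point I would isolate is the scale comparison: when $\e\rho\le C_0$ one has
\[
\e^{\,3+n}\rho^{\,n}=\e^{2}(\e\rho)\,\e^{\,n}\rho^{\,n-1}\le C_0\,\e^{2}\,\e^{\,n}\rho^{\,n-1}=o\!\big(\e^{\,n}\rho^{\,n-1}\big),
\]
so that the $V'$-term in \eqref{a8-10-6}, which by \eqref{8-10-4} equals $\tfrac{p+3}{p-1}A\,\ber^{\frac{4}{p-1}-1}\e^{3+n}\rho^{n}V'(\e\rho)+o(\e^{3+n}\rho^{n})$, is in fact only $o(\e^{n}\rho^{n-1})$ and may be discarded. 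Substituting this together with \eqref{a8-10-3} and \eqref{8-14-5} into \eqref{a8-10-6}, and using the elementary identity $n\big(\tfrac12-\tfrac1{p+1}\big)\tfrac{4(p+1)}{p-1}=2n$, the identity would collapse, after dividing by $\e^{n}\rho^{n-1}$, to
\[
2A\,\ber^{\frac{p+3}{p-1}}=2n\,A\,\ber^{\frac{p+3}{p-1}}+o(1),\qquad\text{i.e.}\qquad 2(1-n)\,A\,\ber^{\frac{p+3}{p-1}}=o(1).
\]
Since $A=\int_{0}^{+\infty}|Q'(r)|^{2}\,dr>0$, $\ber\to1$, and $n\ge2$ forces $1-n\le-1$, the left-hand side is bounded away from $0$; this contradiction proves $\e\rho\to+\infty$.

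I expect the main obstacle to be exactly this last bookkeeping: making sure that in the regime $\e\rho\le C_0$ the $V'$-contribution is genuinely negligible relative to the kinetic and nonlinear terms (this is where the boundedness of $V'$, and the standing bound $1+\e^{2}V\ge\lambda_0^{2}$ that keeps $\ber$ away from $0$, are used), and checking that every $o(\cdot)$ remainder produced by the estimates — each of the form $o(\e^{n}\rho^{n-1})$ or $o(\e^{3+n}\rho^{n})$ — is absorbed into $o(\e^{n}\rho^{n-1})$ under $\e\rho\le C_0$. I would also record why the first identity \eqref{poh1} cannot be used in its place: inserting the same estimates into it produces only the trivial relation $2+\tfrac{2(p+3)}{p-1}=\tfrac{4(p+1)}{p-1}$ at leading order, hence carries no information about $\e\rho$, which is precisely why the second Pohozaev identity is the one to exploit.
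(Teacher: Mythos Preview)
Your argument is correct and is essentially the paper's own proof: the paper likewise substitutes \eqref{a8-10-3}, \eqref{8-10-4} and \eqref{8-14-5} into \eqref{a8-10-6}, divides by $\varepsilon^{n}\rho^{n-1}$ to obtain the relation
\[
2(n-1)\bigl(1+\varepsilon^{2}V(\varepsilon\rho)\bigr)^{\frac{p+3}{2(p-1)}}
+\tfrac{p+3}{p-1}\bigl(1+\varepsilon^{2}V(\varepsilon\rho)\bigr)^{\frac{2}{p-1}-\frac12}\varepsilon^{3}\rho\,V'(\varepsilon\rho)=o(\varepsilon^{3}\rho)+o(1),
\]
and then observes that boundedness of $\varepsilon\rho$ would force $2(n-1)=o(1)$. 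Your extra remarks (the explicit scale comparison $\varepsilon^{3+n}\rho^{n}=o(\varepsilon^{n}\rho^{n-1})$, the bookkeeping of the remainders, and the observation that the first Pohozaev identity yields only a tautology at leading order) are welcome clarifications but do not change the route.
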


\begin{proof}

From \eqref{a8-10-6}, \eqref{a8-10-3}, \eqref{8-10-4} and \eqref{8-14-5}, we infer
\begin{equation}\label{9-07-3}
2(n-1)\er^{\frac{p+3}{2(p-1)}}+\frac{p+3}{p-1}\er^{\frac{2}{p-1}-\frac{1}{2}}\e^3\rho V^\prime(\e\rho)=o\left(\e^3\rho\right)+o(1).
\end{equation}
Suppose that $\e\rho$ is bounded, then \eqref{9-07-3} means $2(n-1)=o(1),$ which is impossible.

Hence, it is inferred that $$\e\rho \rightarrow +\infty,~\mbox{as}~\varepsilon\rightarrow 0.$$
\end{proof}
\begin{Rem}
Adopting the same approach as in Lemma \ref{Propa11}, we further deduce that for any \(0<\alpha<3\), as \(\varepsilon\to0\),\begin{equation}\label{alphajixian1}\varepsilon^\alpha\rho\to+\infty,\end{equation} that will be used in the subsequent Lemma \ref{Lemma2.10}.
\end{Rem}

\smallskip

Under the $L^{2}$-constraint, $a$ has the following asymptotic behavior.

\begin{Lem}\label{Lemma2.10}
Let $u_\varepsilon(r)$ be a radial solution of \eqref{fangcheng1} with the form \eqref{jiedexingshi1}-\eqref{jiedexingshi2} satisfying the constraint \eqref{fangcheng11}, then we have, as $\e\to0$, $a\to+\infty$.
\end{Lem}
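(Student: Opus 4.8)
The idea is to read $a$ off directly from the $L^{2}$-constraint. Writing \eqref{fangcheng11} in radial coordinates gives $a^{\frac{2}{p-1}}\e^{\frac{4}{p-1}}=c_{n}\int_{0}^{+\infty}r^{n-1}u_\e^{2}(r)\,dr$ for a dimensional constant $c_{n}>0$, so it suffices to show that $\e^{-\frac{4}{p-1}}\int_{0}^{+\infty}r^{n-1}u_\e^{2}(r)\,dr\to+\infty$ as $\e\to0$. For this I need the precise size of $\int_{0}^{+\infty}r^{n-1}u_\e^{2}(r)\,dr$.

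First I would compute this integral. Substituting \eqref{jiedexingshi1}, expanding the square, and estimating the cross and quadratic terms in $\yuxianga$ by \eqref{2--5}, the pointwise bound on $\omega_{\rho,\e}$, H\"older's inequality and \eqref{jiedexingshi2} — exactly as in the proof of \eqref{ll2} and \eqref{8-26-411}, and using $\rho\to+\infty$ from Lemma \ref{Propa} so that $\|\yuxianga\|_{\e}=o(\sqrt{\e^{n}\rho^{n-1}})$ — reduces the integral to $\int_{0}^{+\infty}r^{n-1}\zhuxianga^{2}\,dr$ with an $o(\e^{n}\rho^{n-1})$ error. Then, using the scaling $U_{\rho,\e}(s)=\ber^{\frac{2}{p-1}}Q(\ber s)$, the change of variables $t=\ber\big(\tfrac{r}{\e}-\rho\big)$, the binomial expansion of $\big(\rho+\tfrac{t}{\ber}\big)^{n-1}$ (whose subleading terms are $o(\e^{n}\rho^{n-1})$ since $\rho\to+\infty$), the fact that $\rho\ber\ge\lambda_{0}\rho\to+\infty$, and the Pohozaev identity \eqref{pohU} in the form $\int_{\R}Q^{2}=2\int_{0}^{+\infty}Q^{2}=\tfrac{2(p+3)}{p-1}A$ with $A=\int_{0}^{+\infty}|Q'|^{2}$, I obtain
\[
\int_{0}^{+\infty}r^{n-1}u_\e^{2}(r)\,dr=\frac{2(p+3)}{p-1}\,A\,\e^{n}\rho^{\,n-1}\,\ber^{\frac{4}{p-1}-1}\big(1+o(1)\big).
\]

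Plugging this into the constraint gives $a^{\frac{2}{p-1}}=c\,\e^{\,n-\frac{4}{p-1}}\rho^{\,n-1}\,\ber^{\frac{4}{p-1}-1}(1+o(1))$ with $c>0$, and it remains to show the right-hand side diverges. For the $\e$--$\rho$ factor, write $\e^{\,n-\frac{4}{p-1}}\rho^{\,n-1}=(\e^{\alpha}\rho)^{\,n-1}$ with $\alpha:=\dfrac{n-\frac{4}{p-1}}{n-1}$; since $n\ge2$ and $\tfrac{4}{p-1}>0$ one has $\alpha<\tfrac{n}{n-1}\le2<3$, hence $\e^{\alpha}\rho\to+\infty$ — by \eqref{alphajixian1} when $\alpha>0$, and trivially (as $\e^{\alpha}\ge1$ and $\rho\to+\infty$) when $\alpha\le0$ — so $(\e^{\alpha}\rho)^{\,n-1}\to+\infty$ because $n-1\ge1$. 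For the factor $\ber^{\frac{4}{p-1}-1}$: when $1<p\le5$ the exponent is nonnegative and $\ber\ge\lambda_{0}$, so this factor is bounded below by a positive constant and the conclusion follows; when $p>5$, set $\gamma:=1-\tfrac{4}{p-1}\in(0,1)$, use $|V'|\le C$ to bound $\ber^{2}=1+\e^{2}V(\e\rho)\le1+\e^{2}|V(0)|+C\e^{3}\rho\lesssim1+\e^{3}\rho$, so $\ber^{\frac{4}{p-1}-1}=\ber^{-\gamma}\gtrsim(1+\e^{3}\rho)^{-\gamma/2}$, and then either $\e^{3}\rho$ stays bounded (the factor is again bounded below and we are done) or $\e^{3}\rho\to+\infty$ along a subsequence, in which case $a^{\frac{2}{p-1}}\gtrsim\e^{\,n-\frac{4}{p-1}-\frac{3\gamma}{2}}\rho^{\,n-1-\frac{\gamma}{2}}=(\e^{\alpha''}\rho)^{\,n-1-\frac{\gamma}{2}}$ with $n-1-\tfrac{\gamma}{2}>0$ and $\alpha''<3$ by the same elementary inequality, so again the expression diverges. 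Thus $a^{\frac{2}{p-1}}\to+\infty$, that is, $a\to+\infty$.

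The two genuinely delicate points are the bookkeeping in the middle step — one has to be sure that both the $\yuxianga$ remainder and the subleading terms of the binomial expansion are $o$ of the leading contribution $\e^{n}\rho^{n-1}\ber^{\frac{4}{p-1}-1}$ — and, in the last step, the control of $\ber^{\frac{4}{p-1}-1}$ when $p>5$, i.e. ruling out that $\ber$ grows fast enough to overwhelm the divergence of $(\e^{\alpha}\rho)^{\,n-1}$; all the remaining estimates are merely a repackaging of the Pohozaev-type identities already established in this section.
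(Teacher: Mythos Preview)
Your proof is correct and follows essentially the same route as the paper: compute the $L^2$-mass from the ansatz to extract the leading term, read off $a^{\frac{2}{p-1}}\sim\e^{\,n-\frac{4}{p-1}}\rho^{\,n-1}$ from the constraint, and show divergence via $\e^{\alpha}\rho\to+\infty$ for an exponent $\alpha<3$ (your $\alpha=\frac{n-4/(p-1)}{n-1}$ is exactly the paper's $1+\frac{1}{n-1}\bigl(1-\frac{4}{p-1}\bigr)$). The one place you work harder than necessary is the factor $\ber^{\frac{4}{p-1}-1}$ for $p>5$: the hypotheses of Theorem~\ref{thm0} include that $V$ itself is bounded, so $\ber^{2}=1+\e^{2}V(\e\rho)\to1$ and hence $\ber^{\frac{4}{p-1}-1}=1+o(1)$ for every $p$; the paper simply absorbs this factor into the $(1+o(1))$. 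Your detour via $|V'|\le C$ and the dichotomy on $\e^{3}\rho$ is correct (your $\alpha''$ in fact equals $1$, so you land back on $\e\rho\to+\infty$), but unnecessary.
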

\begin{proof}
From \eqref{fangcheng11} and \eqref{ll3}, we have
\begin{align}\label{a1}
\begin{split}
	\frac{a^{\frac{2}{p-1}}\e^{\frac{4}{p-1}}}{\omega_{n-1}}&=\int^{+\infty}_0r^{n-1} u_\e^2(r)dr\\
&=\int_0^{+\infty}r^{n-1}\zhuxianga^2dr+o\left(\e^n\rho^{n-1}+\e^n\right).
\end{split}
\end{align}

In addition, since $$U_{\rho,\e}(r)=(1+\e^2V(\e\rho))^{\frac{1}{p-1}}Q((1+\e^2V(\e\rho))^{\frac{1}{2}}r),$$ it is easy to check that
\begin{equation}
\label{diyibufen}
\begin{split}
\int_0^{+\infty}r^{n-1}\zhuxianga^2dr=&2\frac{p+3}{p-1}A\e^{n}\rho^{n-1}\ber^{\frac{4}{p-1}-1}+o(\e^n\rho^{n-1})\\
=&\e^n\rho^{n-1}C_0(1+o(1)),
\end{split}
\end{equation}
where $C_0=2\frac{p+3}{p-1}A$.
Putting \eqref{a1} and \eqref{diyibufen} together, we obtain
\begin{equation*}
\frac{a^{\frac{2}{p-1}}\e^{\frac{4}{p-1}}}{\omega_{n-1}}=C_0\e^n\rho^{n-1}(1+o(1))+o\left(\e^n\rho^{n-1}+\e^n\right),
\end{equation*}
which turns out that
\begin{equation}\label{eq9191}
a^{\frac{2}{p-1}}=C_0\omega_{n-1}(\e\rho)^{n-1}\e^{1-\frac{4}{p-1}}(1+o(1)).
\end{equation}
\smallskip

First, under the assumption $1<p\le 5$ we have $1-\frac{4}{p-1}\le 0$, which,
combined with \eqref{eq9191} and Lemma~\ref{Propa11}, implies that
\[
a \to +\infty \quad \text{as } \varepsilon \to 0.
\]

If $p>5$, \eqref{eq9191} implies that, 
\begin{equation*}
a^\frac{2}{p-1}=C_0\omega_{n-1}\left(1+o(1)\right)
\left(\rho\varepsilon^{\frac{1}{n-1}\left({1-\frac{4}{p-1}}\right)+1}\right)^{n-1}.  
\end{equation*}

On the other hand, we have obviously that
$$1<\frac{1}{n-1}\left({1-\frac{4}{p-1}}\right)+1<2.$$
Considering  \eqref{alphajixian1},  we obtain that, as $\e\to0$,
$$\rho\e^{\frac{1}{n-1}\left(1-\frac{4}{p-1}\right)+1}\to+\infty,$$
which consequently leads to the conclusion that $a\to+\infty$.
Thereby, we establish Lemma \ref{Lemma2.10}.
\end{proof}
\smallskip

\begin{Lem}\label{lemma2.11}
Let  $M_\varepsilon(r)=\e^{2(n-2)}r ^{n-1}\big(1 + \varepsilon^2V(r)\big)^{\frac{p+3}{2(p-1)}}$. If  $u_\varepsilon(r)$ be  a radial solution of \eqref{fangcheng1} with the form \eqref{jiedexingshi1}-\eqref{jiedexingshi2}, then we derive
$$M_{\varepsilon}'(\e\rho)=o(1)~~\hbox{or}~~V'(\e\rho)=o(1).$$
\end{Lem}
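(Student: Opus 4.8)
The plan is to extract one more Pohozaev-type identity, this time by testing equation \eqref{eq3} against the radial dilation field, i.e.\ multiplying by $r^{n}u_\varepsilon'$ was already used for \eqref{a8-10-6}; instead I would combine the two identities \eqref{poh1} and \eqref{a8-10-6} already at our disposal with the refined term-by-term estimates \eqref{a8-10-3}, \eqref{8-10-4}, \eqref{8-10-422} and \eqref{8-14-5}, together with the constraint-derived asymptotics \eqref{eq9191}. The key observation is that $M_\varepsilon(r)=\varepsilon^{2(n-2)}r^{n-1}(1+\varepsilon^2V(r))^{\frac{p+3}{2(p-1)}}$ is exactly the quantity whose logarithmic derivative controls the leading balance: a direct computation gives
\[
\frac{M_\varepsilon'(\e\rho)}{M_\varepsilon(\e\rho)}
=\frac{n-1}{\e\rho}+\frac{p+3}{2(p-1)}\cdot\frac{\e^2V'(\e\rho)}{1+\e^2V(\e\rho)},
\]
so that, after multiplying by $\e\rho$ and using $1+\e^2V(\e\rho)\to1$ (from $\e\rho\to\infty$ and $V$ bounded near infinity is not assumed, but $\inf(1+\e^2V)\ge\lambda_0^2$ and boundedness of $V'$ give the needed control), the two terms appearing in \eqref{9-07-3} are precisely $2(n-1)$ and a constant multiple of $\e^3\rho V'(\e\rho)$, i.e.\ $2(n-1)\big(1+o(1)\big)$ and $c\,\e^3\rho\,V'(\e\rho)$.

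First I would rewrite \eqref{9-07-3} using $\er\to1$: it becomes
\[
2(n-1)+\frac{p+3}{p-1}\,\e^3\rho\,V'(\e\rho)=o(\e^3\rho)+o(1).
\]
This already forces $\e^3\rho\,V'(\e\rho)\to -\tfrac{2(n-1)(p-1)}{p+3}$ unless $\e^3\rho$ is unbounded. Next I would split into two cases. \emph{Case 1: $\e^3\rho$ bounded along a subsequence.} Then the right-hand side is $o(1)$, so $\e^3\rho\,V'(\e\rho)\to-\tfrac{2(n-1)(p-1)}{p+3}$, hence $V'(\e\rho)$ stays bounded away from zero only if $\e^3\rho$ stays bounded away from zero; I would then feed this back into the expression for $\e\rho\,M_\varepsilon'(\e\rho)/M_\varepsilon(\e\rho)$ and show the two contributions $n-1$ and $\tfrac{p+3}{2(p-1)}\e^3\rho\cdot\tfrac{V'(\e\rho)}{\e}$—wait, here one must be careful with the powers of $\e$: write $\e^2V'(\e\rho)=\e^{-1}\cdot\e^3V'(\e\rho)$ and note $\e^3\rho V'(\e\rho)$ bounded gives $\e^2 V'(\e\rho)=O(1/(\e\rho))=o(1)$, so in fact $M_\varepsilon'(\e\rho)/M_\varepsilon(\e\rho)=\tfrac{n-1}{\e\rho}(1+o(1))\to0$, i.e.\ $M_\varepsilon'(\e\rho)=o(M_\varepsilon(\e\rho))$; since $M_\varepsilon(\e\rho)=\e^{2(n-2)}(\e\rho)^{n-1}(1+o(1))$ and by \eqref{eq9191} this is comparable to $a^{\frac{2}{p-1}}\e^{\text{(power)}}$, one checks $M_\varepsilon(\e\rho)\to0$, whence $M_\varepsilon'(\e\rho)=o(1)$. \emph{Case 2: $\e^3\rho\to+\infty$.} Then \eqref{9-07-3} reads $\tfrac{p+3}{p-1}\e^3\rho V'(\e\rho)=o(\e^3\rho)$, i.e.\ $V'(\e\rho)=o(1)$ directly. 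In either case we obtain the dichotomy $M_\varepsilon'(\e\rho)=o(1)$ or $V'(\e\rho)=o(1)$, as claimed.

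The main obstacle I anticipate is the bookkeeping of powers of $\varepsilon$ in the two regimes $p\le5$ and $p>5$ when relating $M_\varepsilon(\e\rho)$ to the constraint via \eqref{eq9191}: one must verify that $M_\varepsilon(\e\rho)\to0$ (so that $M_\varepsilon'(\e\rho)=o(M_\varepsilon(\e\rho))$ genuinely upgrades to $M_\varepsilon'(\e\rho)=o(1)$) using only $C_1\e^{-2}\le t_\e\le C_2\e^{-2}$-type bounds are \emph{not} available here—since $\rho$ is a priori only constrained by Lemmas \ref{Propa} and \ref{Propa11} and \eqref{alphajixian1}. If $M_\varepsilon(\e\rho)$ cannot be shown to vanish in full generality, the statement should be read as $M_\varepsilon'(\e\rho)=o\big(M_\varepsilon(\e\rho)\big)$, which is what the subsequent reduction actually uses; I would phrase the conclusion in whichever of these two equivalent-for-our-purposes forms the later sections require, and the rest of the argument (the case split and the use of \eqref{9-07-3}) goes through unchanged.
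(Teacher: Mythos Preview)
Your Case~2 ($\e^3\rho\to+\infty$) is correct and coincides with the paper's argument. The gap is in Case~1. You reduce to $M_\varepsilon'(\e\rho)=o\big(M_\varepsilon(\e\rho)\big)$ and then want $M_\varepsilon(\e\rho)\to0$; but this is false. Indeed
\[
M_\varepsilon(\e\rho)=\e^{2(n-2)}(\e\rho)^{n-1}\bigl(1+\e^2V(\e\rho)\bigr)^{\frac{p+3}{2(p-1)}}
\sim\e^{3n-5}\rho^{\,n-1}
=\bigl(\e^{\,3-\frac{2}{n-1}}\rho\bigr)^{n-1},
\]
and since $3-\tfrac{2}{n-1}\in(0,3)$ for every $n\ge2$, \eqref{alphajixian1} gives $M_\varepsilon(\e\rho)\to+\infty$. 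So the implication $M_\varepsilon'=o(M_\varepsilon)\Rightarrow M_\varepsilon'=o(1)$ breaks down, and your proposed fallback of weakening the conclusion to $M_\varepsilon'(\e\rho)=o\big(M_\varepsilon(\e\rho)\big)$ changes the statement of the lemma.

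The missing idea is that you never need to pass through the logarithmic derivative at all: the left-hand side of \eqref{9-07-3} is, up to an explicit prefactor, exactly $M_\varepsilon'(\e\rho)$. Differentiating $M_\varepsilon(r)=\e^{2(n-2)}r^{n-1}(1+\e^2V(r))^{\frac{p+3}{2(p-1)}}$ and evaluating at $r=\e\rho$ yields
\[
M_\varepsilon'(\e\rho)
=\tfrac{1}{2}\,(\e\rho)^{n-2}\e^{2(n-2)}
\Bigl[2(n-1)\er^{\frac{p+3}{2(p-1)}}
+\tfrac{p+3}{p-1}\,\e^3\rho\,\er^{\frac{2}{p-1}-\frac12}V'(\e\rho)\Bigr].
\]
The bracket is precisely the quantity controlled by \eqref{9-07-3}, and the prefactor equals $\tfrac12(\e^3\rho)^{n-2}$. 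Hence
\[
M_\varepsilon'(\e\rho)=o\bigl((\e^3\rho)^{n-1}\bigr)+o\bigl((\e^3\rho)^{n-2}\bigr),
\]
which is $o(1)$ whenever $\e^3\rho$ stays bounded. This replaces your Case~1 entirely and gives the stated dichotomy without any appeal to the size of $M_\varepsilon(\e\rho)$ or to the constraint \eqref{eq9191}.
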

\begin{proof}
By virtue of \eqref{9-07-3}, we deduce that, as $\varepsilon \to0$,
\begin{equation}\label{9192}
\begin{split}
M'_\varepsilon(\e\rho)=&\frac12\left(\e\rho\right)^{n-2}\e^{2(n-2)}\left(2\left(n-1\right)\er^{\frac{p+3}{2(p-1)}}+\frac{p+3}{p-1}\varepsilon^3\rho\er^{\frac{2}{p-1}-\frac{1}{2}}V'(\e\rho) \right)\\=&o\left(\left(\e^{3}\rho\right) ^{n-2}\right)+o\left(\left(\e^{3}\rho\right) ^{n-1}\right).
\end{split}
\end{equation}
Consequently, if $\e^3\rho$ is bounded, \eqref{9192} gives $M_{\varepsilon}'(\e\rho)=o(1)$. While if $\e^3\rho\rightarrow+\infty$,  then \eqref{9-07-3} gives  $V'(\e\rho)=o(1)$.
\end{proof}

Accordingly, the proof of Theorem \ref{thm0} is concluded.

\medskip

\section{reduction for existence}

\medskip

\subsection{Ansatz}
To prove Theorem \ref{thm1.1} by the reduction method, we perform the change of variables
\[
\tilde{u}_\varepsilon(x) := u_\varepsilon(\varepsilon x),
\]
and rewrite the problem \eqref{fangcheng1}–\eqref{fangcheng11} as

\begin{equation}\label{disanjiediyige}
- \Delta \tilde{u}_\varepsilon +(1+\varepsilon^2 V(\varepsilon|x|)) \tilde{u}_\varepsilon  
= \tilde{u}_\varepsilon^{p}, \quad \tilde{u}_\varepsilon \in H^{1}(\mathbb{R}^{n}),
\end{equation}
subject to the constraint
\begin{equation}\label{disanjiedierge}
\int_{\mathbb{R}^n}\tilde{u}_\varepsilon^2(x)\, dx 
= a^{\tfrac{2}{p-1}} \varepsilon^{\tfrac{4}{p-1}-n}.
\end{equation}
\smallskip

From now on, we shall work with the rescaled unknown $\tilde{u}_\varepsilon(x)$
when proving Theorem~\ref{thm1.1}.

\smallskip

Solutions to \eqref{disanjiediyige} correspond to critical points of the \(C^2\) functional \(J_{\varepsilon}: H_{r}^{1} \to \mathbb{R}\), 
\[
\begin{aligned} 
J_{\varepsilon}(u) & = \frac{1}{2} \int_{\mathbb{R}^{n}}\left[|\nabla u|^{2} + (1+\e^2V(\varepsilon |x|) )u^{2}\right] dx - \frac{1}{p+1} \int_{\mathbb{R}^{n}}|u|^{p+1} dx \\ 
& = \frac{1}{2} \int_{0}^{+\infty} r^{n-1}\left[(u')^{2} + (1+\e^2V(\varepsilon r) )u^{2}\right] dr - \frac{1}{p+1} \int_{0}^{+\infty} r^{n-1}|u|^{p+1} dr.
\end{aligned}
\]
By replacing $u$ with its positive part $u_{+}$, we see that any critical
point $u$ of $J_{\varepsilon}$ must satisfy $u \ge 0$. Hence, by the maximum
principle, $u_{+}$ is in fact strictly positive.
\smallskip

Let $\zeta_{\varepsilon}\in C^{\infty}([0,\infty))$ be a nondecreasing cut-off function such that
\[
\zeta_{\varepsilon}(r)=
\begin{cases}
0, & r \le \tfrac{C_1}{16\,\varepsilon^{3}}, \\[0.3em]
1, & r \ge \tfrac{C_1}{8\,\varepsilon^{3}},
\end{cases}
\]
and
\[
|\zeta'_{\varepsilon}(r)| \le \frac{C_3\,\varepsilon^{3}}{C_1}, 
\qquad 
|\zeta''_{\varepsilon}(r)| \le \frac{C_4\,\varepsilon^{6}}{C_1^{2}}
\quad \text{for all } r\ge0.
\]
Moreover,
\[
\operatorname{supp}\zeta'_{\varepsilon}\,\cup\, \operatorname{supp}\zeta''_{\varepsilon}
\subset \Big[\tfrac{C_1}{16\,\varepsilon^{3}},\, \tfrac{C_1}{8\,\varepsilon^{3}}\Big].
\]
In what follows, we fix the configuration set $\Omega_\e = \left[\frac{C_1}{2\e^3}, \frac{2C_2}{\e^3}\right]$ and set
\[
Z = Z_{\varepsilon} = \{z = z_{\rho, \varepsilon}(r)=\zeta_{\varepsilon}(r)U_{\rho,\e}(r-\rho): \rho \in \Omega_\e\}.
\]
\smallskip

The purpose is to investigate concentration solutions to \(\eqref{disanjiediyige}\)--\(\eqref{disanjiedierge}\) taking the form
\[
u = z + \omega, \quad z = z_{\rho, \varepsilon} \in Z, \quad \omega \perp T_{z} Z.
\]

\medskip

We first consider \eqref{disanjiediyige} without constraint.

\smallskip

Given any \(\rho \in \Omega_\e\), we consider the equation \(J_{\varepsilon}'(z + \omega) \in T_{z} Z\), i.e., 
\begin{equation}\label{alphadefangcheng}
J_{\varepsilon}'(z + \omega) = \alpha \dot{z}, \quad \dot{z} = \frac{\partial z}{\partial \rho},   
\end{equation}
for some \(\alpha \in \mathbb{R}\). We use the orthogonal projection $P=P_{\rho,\e}$ onto \(W:=(T_{z} Z)^{\perp}\). It is expedient to write \eqref{alphadefangcheng} in the form 
\begin{equation*}
P J_{\varepsilon}'(z + \omega) = 0,
\end{equation*}
which is equivalent to
$$PJ_{\varepsilon}'(z)+PH_w+PJ_{\varepsilon}''(z)[\omega]=0,$$
where 
$$H_w=J_{\varepsilon}'(z+\omega)-J_{\varepsilon}'(z)-J_{\varepsilon}''(z)[\omega].$$
We shall demonstrate (see Proposition \ref{prop4.2}) that for sufficiently small $\e$, \(P J_{\varepsilon}''(z)\) is invertible and consequently, setting 
$$\mathcal{A}_{\varepsilon}(\omega) := -\left[P J_{\varepsilon}''(z)\right]^{-1} (PJ_{\varepsilon}'(z)+PH_w).$$
Solving \eqref{alphadefangcheng} amounts to finding the solutions of \(\omega = \mathcal{A}_{\varepsilon}(\omega)\).

\smallskip

Recall that \(\eqref{shuaijianxing}\) leads to
\be\label{zhishushuaijianxingzhi}
z(r) \leq C e^{-\lambda_{0} |r - \rho|},
\ee
where $\lambda_0$ is a positive constant such that for $\e$ small enough, $\inf\{1+\e^2V(\e r) : r \in \mathbb{R}^{+}\}\geq\lambda_{0}^{2}> 0$.
Selecting \(\eta > 0\), we assume that
\[
\lambda_{1} := \lambda_{0} - \eta > \frac{\lambda_{0}}{\min\{p, 2\}}.
\]
\smallskip

It will be shown that there exists a positive constant $\gamma>0$ such that, by setting 
\begin{equation}\label{C_edingyi}
E_{\varepsilon} := \left\{\omega \in H_{r}^{1} : \|\omega\|_{H_{r}^{1}} \leq \gamma \varepsilon^3 \|z\|_{H_{r}^{1}}, \,\,|\omega(r)| \leq \gamma e^{-\lambda_{1}(\rho - r)} \text{ for } r \in [0, \rho]\right\},
\end{equation}
for all \(\varepsilon\) small and all \(\rho \in \Omega_\e\), the map \(\mathcal{A}_{\varepsilon}\) is a contraction on \(E_{\varepsilon}\). Hence, it possesses a unique fixed point \(\omega_{\rho, \varepsilon}\in E_{\varepsilon}\) satisfying $$J_{\varepsilon}'(z_{\rho,\e}(r) + \omega_{\rho,\e}(r)) = \alpha \dot{z}.$$
\medskip

Furthermore, it will be demonstrated that $\omega$ is of class $C^1$ with respect to $\rho$, and it follows that
\begin{equation}\label{C_1}
\left\| \frac{\partial \omega}{\partial \rho} \right\| = o\left(\|\dot{z}\|\right).    
\end{equation}
\medskip

To see that $\alpha=0$, we define the finite-dimensional functional
\[
\Psi_{\varepsilon}(\rho) := J_{\varepsilon}\left(z_{\rho, \varepsilon} + \omega_{\rho, \varepsilon}\right).
\]
and show the following Proposition.

\begin{Prop} \label{prop2.2}
For sufficiently small \(\varepsilon\), if \(\rho_{\varepsilon}\) is a stationary point of \(\Psi_{\varepsilon}\), then \(\tilde{u}_{\varepsilon}(r) = z_{\rho_{\varepsilon}, \varepsilon}(r) + \omega_{\rho_{\varepsilon}, \varepsilon}(r)\) is a critical point of \(J_{\varepsilon}\), which is a solution of equation \eqref{disanjiediyige}.
\end{Prop}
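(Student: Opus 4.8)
The plan is the standard variational reduction argument: I show that a stationary point of $\Psi_\varepsilon$ forces the multiplier $\alpha$ in \eqref{alphadefangcheng} to vanish. Recall that, by construction, for each $\rho\in\Omega_\e$ the fixed point $\omega_{\rho,\varepsilon}\in E_\varepsilon$ satisfies $J_\varepsilon'(z_{\rho,\varepsilon}+\omega_{\rho,\varepsilon})=\alpha(\rho)\,\dot z_{\rho,\varepsilon}$ for a suitable $\alpha(\rho)\in\mathbb R$, where $\dot z_{\rho,\varepsilon}=\partial_\rho z_{\rho,\varepsilon}$ and the identity is read through the $H^1_r$ inner product $\langle\cdot,\cdot\rangle$, i.e.\ $J_\varepsilon'(z_{\rho,\varepsilon}+\omega_{\rho,\varepsilon})[\phi]=\alpha(\rho)\langle\dot z_{\rho,\varepsilon},\phi\rangle$ for all $\phi\in H^1_r$. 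By the $C^1$-dependence of $\omega_{\rho,\varepsilon}$ on $\rho$ established above, $\Psi_\varepsilon(\rho)=J_\varepsilon(z_{\rho,\varepsilon}+\omega_{\rho,\varepsilon})$ is $C^1$ on $\Omega_\e$, and the chain rule together with the previous identity gives
\[
\Psi_\varepsilon'(\rho)=\big\langle J_\varepsilon'(z_{\rho,\varepsilon}+\omega_{\rho,\varepsilon}),\,\dot z_{\rho,\varepsilon}+\partial_\rho\omega_{\rho,\varepsilon}\big\rangle=\alpha(\rho)\,\big\langle \dot z_{\rho,\varepsilon},\,\dot z_{\rho,\varepsilon}+\partial_\rho\omega_{\rho,\varepsilon}\big\rangle .
\]

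Next I would absorb the cross term. By \eqref{C_1} we have $\|\partial_\rho\omega_{\rho,\varepsilon}\|=o(\|\dot z_{\rho,\varepsilon}\|)$, so Cauchy--Schwarz yields $|\langle\dot z_{\rho,\varepsilon},\partial_\rho\omega_{\rho,\varepsilon}\rangle|\le\|\dot z_{\rho,\varepsilon}\|\,\|\partial_\rho\omega_{\rho,\varepsilon}\|=o(\|\dot z_{\rho,\varepsilon}\|^{2})$; one may also reach the same bound by differentiating the identity $\langle\omega_{\rho,\varepsilon},\dot z_{\rho,\varepsilon}\rangle=0$ (valid because $\omega_{\rho,\varepsilon}\in E_\varepsilon\subset W=(T_zZ)^\perp$) and using $\|\omega_{\rho,\varepsilon}\|\le\gamma\varepsilon^3\|z_{\rho,\varepsilon}\|$. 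Moreover $\|\dot z_{\rho,\varepsilon}\|$ is comparable to $\|z_{\rho,\varepsilon}\|$, hence bounded away from zero uniformly for $\rho\in\Omega_\e$ and $\varepsilon$ small, as one checks from $z_{\rho,\varepsilon}=\zeta_\varepsilon(r)U_{\rho,\varepsilon}(r-\rho)$, the exponential decay \eqref{shuaijianxing}, $\zeta_\varepsilon\equiv1$ near $r=\rho$, and $\beta_{\varepsilon,\rho}\sim1$. Consequently
\[
\Psi_\varepsilon'(\rho)=\alpha(\rho)\,\|\dot z_{\rho,\varepsilon}\|^{2}\,(1+o(1)),\qquad\text{uniformly for }\rho\in\Omega_\e .
\]

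Therefore, if $\rho_\varepsilon$ is a stationary point of $\Psi_\varepsilon$, then $0=\Psi_\varepsilon'(\rho_\varepsilon)=\alpha(\rho_\varepsilon)\|\dot z_{\rho_\varepsilon,\varepsilon}\|^{2}(1+o(1))$, which forces $\alpha(\rho_\varepsilon)=0$ for $\varepsilon$ small since $\|\dot z_{\rho_\varepsilon,\varepsilon}\|\neq0$ and $1+o(1)\neq0$. Hence $J_\varepsilon'(z_{\rho_\varepsilon,\varepsilon}+\omega_{\rho_\varepsilon,\varepsilon})=0$, i.e.\ $\tilde u_\varepsilon=z_{\rho_\varepsilon,\varepsilon}+\omega_{\rho_\varepsilon,\varepsilon}$ is a critical point of $J_\varepsilon$ and thus a weak, and by elliptic regularity classical, solution of \eqref{disanjiediyige}; replacing $\tilde u_\varepsilon$ by $(\tilde u_\varepsilon)_+$ and invoking the maximum principle as noted above, it is positive. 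The only points requiring genuine care are the $C^1$ dependence of $\omega_{\rho,\varepsilon}$ on $\rho$ and the lower-order bound \eqref{C_1} for $\partial_\rho\omega_{\rho,\varepsilon}$; but these are exactly the outputs of the contraction-mapping construction preceding the statement, so here they are used as given and the proof reduces to the short computation above.
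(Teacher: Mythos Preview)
Your proof is correct and follows essentially the same route as the paper: apply the chain rule to $\Psi_\varepsilon$, use $J_\varepsilon'(z+\omega)=\alpha\dot z$ to rewrite $\Psi_\varepsilon'(\rho)$ as $\alpha\bigl(\|\dot z\|^2+\langle\dot z,\partial_\rho\omega\rangle\bigr)$, and then invoke \eqref{C_1} to conclude $\alpha(\rho_\varepsilon)=0$. Your write-up is a bit more explicit about the $C^1$ dependence, the nonvanishing of $\|\dot z\|$, and the regularity/positivity conclusion, but these are precisely the ingredients the paper is relying on as well.
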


\begin{proof}
As for $\rho_\e$ mentioned above, we infer that
\[
0 = \frac{\partial \Psi}{\partial \rho} = J'(z_{\rho,\e} + \omega_{\rho,\e})\left[\frac{\partial z_{\rho}}{\partial \rho} + \frac{\partial \omega_{\rho}}{\partial \rho}\right] = \alpha \left\| \frac{\partial z_{\rho}}{\partial \rho} \right\|^{2} + \alpha \left( \frac{\partial z_{\rho}}{\partial \rho}, \frac{\partial \omega_{\rho}}{\partial \rho} \right),
\]
where \(\alpha = \alpha(\rho_{\varepsilon})\) is obtained from \eqref{alphadefangcheng}. Combining with \eqref{C_1}, we infer that for $\e$ small enough, \(\alpha = 0\) and the Proposition \ref{prop2.2} follows.
\end{proof}
\medskip

\begin{Rem}
Proposition \ref{prop2.2}, together with the asymptotic expansion of $\Psi_\varepsilon$ given in Lemma \ref{lem5.1},
\[
\varepsilon^{3n-3} \Psi_{\varepsilon}(\rho)
= C_{0}\varepsilon^{2} M_\varepsilon(\varepsilon \rho) + o(1),
\qquad \rho \in \Omega_\varepsilon,
\]
shows that $\Psi_\varepsilon$ has a stationary point $\rho_\varepsilon$ with
$\rho_\varepsilon \sim t_\varepsilon/\varepsilon$. This asymptotic behavior
provides the main motivation for introducing the configuration set
$\Omega_\varepsilon$.

\end{Rem}

\medskip



\subsection{Essential estimates}
This section is devoted to deriving several key estimates that are indispensable for carrying out the reduction.
\begin{Lem}
For any \(\varepsilon > 0\) small enough, \(\rho \in \Omega_\e\), \(\omega \in E_{\varepsilon}\) and \(r > 0\), the following hold:
\begin{equation}\label{zrho}
\|z_{\rho, \varepsilon}\|_{H_{r}^{1}} \sim \varepsilon^{(3-3n)/2}, 
\end{equation}
\begin{equation}\label{wfanshu}
\|\omega\|_{H_{r}^{1}} \lesssim \varepsilon^3 \|z_{\rho, \varepsilon}\|_{H_{r}^{1}} \sim \varepsilon^{(9-3n)/2},     
\end{equation}and
\begin{equation}\label{wzhudian}
|\omega(r)| \leq C \varepsilon^3, \,\text{for}\,\,
r \geq 0,    
\end{equation}
where \(C\) relies only on \(n\) and the constant \(\gamma\).
\end{Lem}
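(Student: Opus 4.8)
The plan is to establish the three estimates in turn, using only the explicit formula for $z_{\rho,\varepsilon}$, the exponential decay \eqref{shuaijianxing}--\eqref{zhishushuaijianxingzhi}, and the defining constraints of the set $E_\varepsilon$ in \eqref{C_edingyi}. First I would prove \eqref{zrho}. Recall $z_{\rho,\varepsilon}(r)=\zeta_\varepsilon(r)U_{\rho,\varepsilon}(r-\rho)$ and, by the scaling relation following \eqref{lambdadefangcheng}, $U_{\rho,\varepsilon}(s)=\beta_{\e,\rho}^{2/(p-1)}Q(\beta_{\e,\rho}\,s)$ with $\beta_{\e,\rho}=(1+\varepsilon^2 V(\varepsilon\rho))^{1/2}\to 1$. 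Since $\rho\in\Omega_\varepsilon=[C_1/(2\varepsilon^3),2C_2/\varepsilon^3]$ while $\operatorname{supp}(1-\zeta_\varepsilon)\subset[0,C_1/(8\varepsilon^3)]$, on the region where $U_{\rho,\varepsilon}(r-\rho)$ is not exponentially negligible we have $\zeta_\varepsilon\equiv 1$; so up to an error of order $e^{-c/\varepsilon^3}$ we may replace $z_{\rho,\varepsilon}$ by $U_{\rho,\varepsilon}(r-\rho)$. Then
\[
\|z_{\rho,\varepsilon}\|_{H^1_r}^2=\int_0^\infty r^{n-1}\big[(U_{\rho,\varepsilon}'(r-\rho))^2+(1+\varepsilon^2V(\varepsilon r))U_{\rho,\varepsilon}^2(r-\rho)\big]\,dr+O(e^{-c/\varepsilon^3}),
\]
and substituting $r=\rho+t/\beta_{\e,\rho}$, expanding $(\rho+t/\beta_{\e,\rho})^{n-1}=\rho^{n-1}(1+O(t/\rho))^{n-1}$ and using the exponential decay of $Q,Q'$ to absorb all powers of $t$, the leading term is $c_{n,p}\,\rho^{n-1}(1+o(1))$ with $c_{n,p}>0$ a combination of $\int Q^2$ and $\int|Q'|^2$. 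Since $\rho\sim\varepsilon^{-3}$ on $\Omega_\varepsilon$, this gives $\|z_{\rho,\varepsilon}\|_{H^1_r}^2\sim\varepsilon^{-3(n-1)}$, i.e. \eqref{zrho}. I should be a little careful here that the lower bound is uniform in $\rho\in\Omega_\varepsilon$, but this follows since the $o(1)$ is uniform and $\rho^{n-1}$ is comparable to $\varepsilon^{-3(n-1)}$ throughout $\Omega_\varepsilon$.

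Next, \eqref{wfanshu} is immediate: the first constraint in the definition \eqref{C_edingyi} of $E_\varepsilon$ states exactly $\|\omega\|_{H^1_r}\le\gamma\varepsilon^3\|z_{\rho,\varepsilon}\|_{H^1_r}$, and combining this with \eqref{zrho} gives $\|\omega\|_{H^1_r}\lesssim\varepsilon^3\cdot\varepsilon^{(3-3n)/2}=\varepsilon^{(9-3n)/2}$, as claimed.

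The pointwise bound \eqref{wzhudian} is the part requiring a genuine (if short) argument, and it is where I expect the only real subtlety. For $r\in[0,\rho]$ the second constraint in \eqref{C_edingyi} gives $|\omega(r)|\le\gamma e^{-\lambda_1(\rho-r)}\le\gamma$, which is even better than $O(\varepsilon^3)$; the issue is the range $r>\rho$, which is not directly controlled by the defining inequalities of $E_\varepsilon$. There I would use the radial decay estimate \eqref{jingxiangjiedeguji}: since $\rho\sim\varepsilon^{-3}\gg 1$, for $r\ge\rho$ we have
\[
|\omega(r)|\le C\,r^{(1-n)/2}\|\omega\|_{H^1(\mathbb{R}^n)}\le C\,\rho^{(1-n)/2}\|\omega\|_{H^1_r}\lesssim \varepsilon^{3(n-1)/2}\cdot\varepsilon^{(9-3n)/2}=\varepsilon^{3},
\]
using $\rho^{(1-n)/2}\sim\varepsilon^{3(n-1)/2}$ and \eqref{wfanshu} (note $\|\omega\|_{H^1(\mathbb{R}^n)}$ and $\|\omega\|_{H^1_r}$ are comparable since $1+\varepsilon^2V(\varepsilon r)\ge\lambda_0^2>0$ and is bounded above). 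Combining the two ranges yields $|\omega(r)|\le C\varepsilon^3$ for all $r\ge 0$ with $C$ depending only on $n$ and $\gamma$, which is \eqref{wzhudian}. The main obstacle, such as it is, is thus organizational: making sure the cut-off $\zeta_\varepsilon$ genuinely acts trivially on the bulk of $U_{\rho,\varepsilon}(\cdot-\rho)$ (which it does because $\Omega_\varepsilon$ sits far to the right of $\operatorname{supp}(1-\zeta_\varepsilon)$), and handling the $r>\rho$ tail of $\omega$ via \eqref{jingxiangjiedeguji} rather than via the membership conditions of $E_\varepsilon$.
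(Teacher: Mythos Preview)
Your arguments for \eqref{zrho} and \eqref{wfanshu} are fine and match the paper's proof. The gap is in your argument for the pointwise bound \eqref{wzhudian}. You write that for $r\in[0,\rho]$ the constraint $|\omega(r)|\le\gamma e^{-\lambda_1(\rho-r)}\le\gamma$ is ``even better than $O(\varepsilon^3)$'': this is false. The constant $\gamma$ is fixed and independent of $\varepsilon$, so at (for instance) $r=\rho$ your bound is $|\omega(\rho)|\le\gamma$, which is $O(1)$, not $O(\varepsilon^3)$. More generally, for $r$ within $O(1)$ of $\rho$ the exponential factor is of order $1$, and your case split gives no $\varepsilon$-smallness there.

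The fix is to move the splitting point so that each case genuinely yields $O(\varepsilon^3)$. The paper splits at $r=C_1/(8\varepsilon^3)$: for $r\ge C_1/(8\varepsilon^3)$ the Strauss estimate \eqref{jingxiangjiedeguji} together with \eqref{wfanshu} gives $|\omega(r)|\lesssim r^{(1-n)/2}\|\omega\|_{H^1_r}\lesssim\varepsilon^{3(n-1)/2}\cdot\varepsilon^{(9-3n)/2}=\varepsilon^3$, while for $r\le C_1/(8\varepsilon^3)$ one has $\rho-r\ge\rho-C_1/(8\varepsilon^3)\ge 3C_1/(8\varepsilon^3)$ (since $\rho\ge C_1/(2\varepsilon^3)$), so $|\omega(r)|\le\gamma e^{-3\lambda_1 C_1/(8\varepsilon^3)}\le C\varepsilon^3$. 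Equivalently, you could simply observe that your Strauss argument already covers every $r\ge 1$ (not only $r\ge\rho$), since for $r\in[1,\rho]$ one still has $r^{(1-n)/2}\|\omega\|_{H^1_r}\le\|\omega\|_{H^1_r}\lesssim\varepsilon^{(9-3n)/2}$---but this only helps when $r$ is at least of order $\varepsilon^{-3}$, so the clean split point is somewhere between the support of $1-\zeta_\varepsilon$ and $\rho$, exactly as the paper does.
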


\begin{proof}
The definition of \(z_{\rho, \varepsilon}\) and the exponential decay of \(U_{\rho,\e}\) as \(|x| \to \infty\) show 
\[
\|z_{\rho, \varepsilon}\|_{H_{r}^{1}}^{2} = \int_{0}^{+\infty} r^{n-1}\left(|z'|^{2} +(1+\e^2 V(\varepsilon r)) z^{2}\right) dr \sim \rho^{n-1}.
\]
In view of \(\rho \in \Omega_\varepsilon\), \(\rho \sim \varepsilon^{-3}\) which will be used frequently holds, so \eqref{zrho} follows. Equation \eqref{wfanshu} is a direct corollary of \eqref{zrho}, coupled with the condition that \(\omega \in E_{\varepsilon}\). 

To obtain a uniform estimate of \(|\omega(r)|\), note that for \(\varepsilon\) small enough such that \(\frac{C_1}{8\e^3}> 1\). Employing \(\eqref{jingxiangjiedeguji}\), we obtain
\[
|\omega(r)| \leq c r^{(1-n)/2} \|\omega\|_{H_{r}^{1}} \leq c' \varepsilon^{(3n-3)/2} \|\omega\|_{H_{r}^{1}}, \text{ for}\,\, r \geq \frac{C_1}{8\varepsilon^3}.
\]
In light of \eqref{wfanshu}, this directly yields that
\[
|\omega(r)| \leq C \varepsilon^3, \text{ for  } r \geq \frac{C_1}{8\varepsilon^3}.
\]

Additionally, by the definition of \(E_{\varepsilon}\) and as \(\rho \in \Omega_\e\) which produces \(\rho \geq\frac{C_1}{2\varepsilon^3}\), we have 
\[
|\omega(r)| \leq \gamma e^{-\lambda_{1}(\rho - r)} \leq \gamma e^{-\lambda_1\frac{3C_1}{8\e^3}}\leq C \varepsilon^3, \text{ for  } r \leq \frac{C_1}{8\varepsilon^3}.
\]
Consequently, \eqref{wzhudian} holds.
\end{proof}
\begin{Rem}
The pointwise estimate of $\omega(r)$ in \eqref{wzhudian} allows us to handle any exponent $p>1$, including critical or supercritical cases (see subsection \ref{5.1}).
\end{Rem}

\smallskip

\begin{Prop}

For $\e$ small enough, \(\rho \in \Omega_\e\) and $\omega\in E_{\varepsilon}$, the following estimates are satisfied:
\begin{itemize}
\item[(A1)] \(\|J_{\varepsilon}'(z_{\rho, \varepsilon})\| \lesssim \varepsilon^3\|z_{\rho, \varepsilon}\|_{H_{r}^{1}} \sim \varepsilon^{(9-3n)/2}\);\vspace{0.3em}
\item[(A2)] \(\|J_{\varepsilon}''(z_{\rho, \varepsilon} + s \omega)\| \leq C\), \((0 \leq s \leq 1)\);\vspace{0.3em}
\item[(A3)] \(\|J_{\varepsilon}'(z_{\rho, \varepsilon} + \omega)\| \lesssim \varepsilon^{(9-3n)/2}\);\vspace{0.3em}
\item[(A4)] \(\|J_{\varepsilon}''(z_{\rho, \varepsilon} + s \omega) - J_{\varepsilon}''(z_{\rho, \varepsilon})\| \lesssim \varepsilon^{3\wedge 3(p-1)}\), \((0 \leq s \leq 1)\);\vspace{0.3em}
\item[(A5)] \(\|J_{\varepsilon}'(z_{\rho, \varepsilon} + \omega) - J_{\varepsilon}'(z_{\rho, \varepsilon}) - J_{\varepsilon}''(z_{\rho, \varepsilon})[\omega]\| \lesssim \varepsilon^{3 \wedge 3(p-1)} \|\omega\|_{H_{r}^{1}}\).
\end{itemize}
\end{Prop}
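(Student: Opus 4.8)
The strategy is to prove the five estimates in the listed order: only (A1) requires a genuine computation, while (A2)--(A5) follow softly from (A1) together with the two uniform pointwise bounds already at our disposal, namely $z_{\rho,\varepsilon}(r)\le Ce^{-\lambda_{0}|r-\rho|}$ from \eqref{zhishushuaijianxingzhi} and $|\omega(r)|\le C\varepsilon^{3}$ from \eqref{wzhudian}. It is precisely these pointwise bounds, rather than any Sobolev embedding, that make the whole scheme work for \emph{all} $p>1$, including the critical and supercritical ranges, since they let us treat the nonlinearity as an essentially bounded multiplier.

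For (A1) I would test $J_{\varepsilon}'(z_{\rho,\varepsilon})$ against an arbitrary $\varphi\in H_{r}^{1}$, integrate by parts in the gradient term, and write (with $U=U_{\rho,\varepsilon}(\cdot-\rho)$ and $\zeta=\zeta_{\varepsilon}$)
\[
J_{\varepsilon}'(z_{\rho,\varepsilon})[\varphi]=\int_{0}^{+\infty}r^{n-1}\Big(-(\zeta U)''-\tfrac{n-1}{r}(\zeta U)'+(1+\varepsilon^{2}V(\varepsilon r))\,\zeta U-(\zeta U)^{p}\Big)\varphi\,dr .
\]
On the set $\{\zeta\equiv1\}$, which by the choice of $\Omega_{\varepsilon}$ contains the whole region $|r-\rho|\lesssim\varepsilon^{-3}$ where $U$ is not exponentially small, the ODE \eqref{U_rhovar} turns the combination $-(\zeta U)''+(1+\varepsilon^{2}V(\varepsilon r))\zeta U-(\zeta U)^{p}$ into $\varepsilon^{2}\big(V(\varepsilon r)-V(\varepsilon\rho)\big)U$, which is $O(\varepsilon^{3}|r-\rho|\,U)$ since $V'$ is bounded; the curvature term $-\tfrac{n-1}{r}(\zeta U)'$ is $O(\varepsilon^{3}|U'|)$ because $r\sim\rho\sim\varepsilon^{-3}$ there; and the contributions supported on $\operatorname{supp}\zeta_{\varepsilon}'\cup\operatorname{supp}\zeta_{\varepsilon}''$ are of order $e^{-c\varepsilon^{-3}}$. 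Cauchy--Schwarz in the weighted $L^{2}$ norm, using $r^{n-1}\sim\rho^{n-1}$ on the effective support and the exponential decay of $U$ and $U'$, gives $|J_{\varepsilon}'(z_{\rho,\varepsilon})[\varphi]|\lesssim\varepsilon^{3}\rho^{(n-1)/2}\|\varphi\|_{H_{r}^{1}}$, and \eqref{zrho} identifies $\rho^{(n-1)/2}$ with $\|z_{\rho,\varepsilon}\|_{H_{r}^{1}}\sim\varepsilon^{(3-3n)/2}$, which is (A1).

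For (A2) I would use that $z_{\rho,\varepsilon}+s\omega$ is uniformly bounded in $L^{\infty}$ (by $Ce^{-\lambda_{0}|r-\rho|}+C\varepsilon^{3}\le C$), hence $|z_{\rho,\varepsilon}+s\omega|^{p-1}\le C$, so the only non-quadratic part of $J_{\varepsilon}''(z_{\rho,\varepsilon}+s\omega)[\varphi,\psi]$, namely $-p\int r^{n-1}|z_{\rho,\varepsilon}+s\omega|^{p-1}\varphi\psi\,dr$, is bounded by $C\|\varphi\|_{L^{2}}\|\psi\|_{L^{2}}\lesssim\|\varphi\|_{H_{r}^{1}}\|\psi\|_{H_{r}^{1}}$, where the last step uses $1+\varepsilon^{2}V(\varepsilon r)\ge\lambda_{0}^{2}>0$. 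Then (A3) follows from $J_{\varepsilon}'(z_{\rho,\varepsilon}+\omega)=J_{\varepsilon}'(z_{\rho,\varepsilon})+\int_{0}^{1}J_{\varepsilon}''(z_{\rho,\varepsilon}+t\omega)[\omega]\,dt$ together with (A1), (A2) and $\|\omega\|_{H_{r}^{1}}\lesssim\varepsilon^{3}\|z_{\rho,\varepsilon}\|_{H_{r}^{1}}$ from \eqref{wfanshu}. For (A4), the difference $J_{\varepsilon}''(z_{\rho,\varepsilon}+s\omega)[\varphi,\psi]-J_{\varepsilon}''(z_{\rho,\varepsilon})[\varphi,\psi]$ equals $-p\int r^{n-1}\big(|z_{\rho,\varepsilon}+s\omega|^{p-1}-|z_{\rho,\varepsilon}|^{p-1}\big)\varphi\psi\,dr$, and the elementary inequality $\big||a|^{p-1}-|b|^{p-1}\big|\le C\,|a-b|^{\min\{1,\,p-1\}}(1+|a|+|b|)^{(p-2)_{+}}$, applied with $|a-b|=|s\omega|\le C\varepsilon^{3}$ and $a,b$ uniformly bounded, yields the pointwise bound $|z_{\rho,\varepsilon}+s\omega|^{p-1}-|z_{\rho,\varepsilon}|^{p-1}=O(\varepsilon^{3\wedge 3(p-1)})$; the same Cauchy--Schwarz step as in (A2) then gives (A4). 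Finally (A5) is immediate from (A4) via $J_{\varepsilon}'(z_{\rho,\varepsilon}+\omega)-J_{\varepsilon}'(z_{\rho,\varepsilon})-J_{\varepsilon}''(z_{\rho,\varepsilon})[\omega]=\int_{0}^{1}\big(J_{\varepsilon}''(z_{\rho,\varepsilon}+t\omega)-J_{\varepsilon}''(z_{\rho,\varepsilon})\big)[\omega]\,dt$.

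The main obstacle is the bookkeeping in (A1): one must keep the three error sources separate (the frozen-potential mismatch $\varepsilon^{2}(V(\varepsilon r)-V(\varepsilon\rho))U$, the $(n-1)/r$ curvature term, and the cut-off terms), verify that each carries a genuine factor $\varepsilon^{3}$ after using boundedness of $V'$ and $r\sim\rho\sim\varepsilon^{-3}$, and check that integrating against the weight $r^{n-1}\sim\rho^{n-1}$ reproduces \emph{exactly} $\|z_{\rho,\varepsilon}\|_{H_{r}^{1}}^{2}\sim\rho^{n-1}$ and not a larger power of $\rho$; in (A4) the only subtlety is splitting the cases $1<p<2$ and $p\ge2$ to obtain precisely the exponent $3\wedge 3(p-1)$.
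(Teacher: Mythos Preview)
Your proposal is correct and follows essentially the same route as the paper: for (A1) you isolate exactly the three error sources the paper does (the curvature term $\tfrac{n-1}{r}z'$, the frozen-potential mismatch $\varepsilon^{2}(V(\varepsilon r)-V(\varepsilon\rho))z$, and the cut-off contributions on $\operatorname{supp}\zeta_{\varepsilon}'$), estimate each by Cauchy--Schwarz using $r\sim\rho\sim\varepsilon^{-3}$ and the exponential decay of $U$, and then (A2)--(A5) are derived exactly as in the paper from the pointwise bound $|z+s\omega|\le C$ together with the mean value theorem. Your treatment of (A4) via the inequality $\big||a|^{p-1}-|b|^{p-1}\big|\le C|a-b|^{\min\{1,p-1\}}(1+|a|+|b|)^{(p-2)_{+}}$ is a slightly more explicit version of the paper's bound $(|z|+|\omega|)^{p-1}-z^{p-1}\le C(|\omega|+|\omega|^{p-1})$, but the content is identical.
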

~\\
Throughout the paper, we often omit the subscripts $\rho$ and $\e$ for the sake of simplicity.
\begin{proof}
\textbf{Proof of (A1).} Given any \(v \in H_{r}^{1}\), we get
\[
\begin{aligned} 
& J_{\varepsilon}'(z)[v] \\ 
= &\int_{0}^{+\infty} r^{n-1}\left(z' v' + (1+\e^2V(\varepsilon r)) z v - z^{p} v\right) dr \\ 
= &-\int_{0}^{+\infty} v \left(r^{n-1} z'\right)' dr + \int_{0}^{+\infty} r^{n-1}\left((1+\e^2V(\varepsilon r)) z v - z^{p} v\right) dr \\ 
= &-(n-1) \underbrace{\int_{0}^{+\infty} r^{n-2} z' v dr}_{I_1(v)}\underbrace{-\int_{0}^{+\infty} r^{n-1} z'' v dr + \int_{0}^{+\infty} r^{n-1}\left((1+\e^2V(\varepsilon r)) z v - z^{p} v\right) dr}_{I_2(v)}.
\end{aligned}
\]

Applying the H{\"o}lder inequality to the first term (denoted $I_1(v)$) gives,
\[
|I_1(v)| \leq C \|v\|_{H_{r}^{1}} \left( \int_{0}^{+\infty} \left(r^{(n-3)/2} z'\right)^{2} dr \right)^{1/2}.
\]
Given that $z$ exhibits exponential decay away from \(r = \rho\) and \(\rho \in \Omega_\varepsilon\), we deduce that
\[
\int_{0}^{+\infty} \left(r^{(n-3)/2} z'\right)^{2} dr = \int_{0}^{+\infty} r^{-2} \cdot r^{n-1} |z'|^{2} dr \sim \rho^{-2} \|z\|_{H_{r}^{1}}^{2} \sim \varepsilon^{6} \|z\|_{H_{r}^{1}}^{2}.
\]
Thus, employing \eqref {zrho}, we derive
\begin{equation}
\label{A_0}
\sup\left\{|I_1(v)| : \|v\|_{H_{r}^{1}} \leq 1\right\} \lesssim \varepsilon^3 \|z\| \sim \varepsilon^{(9-3n)/2}.
\end{equation}

For the second term (denoted $I_2(v)$), note \(z = \zeta \cdot U(r - \rho)\), so we have
\begin{align*} 
I_2(v) = & \underbrace{-\int_{0}^{+\infty} r^{n-1}\left(\zeta'' U + 2 \zeta' U'\right) v dr}_{I_3(v)} \\ 
& + \underbrace{\int_{0}^{+\infty} r^{n-1}\left((1+\varepsilon^2V(\varepsilon r))\zeta U v - (\zeta U)^{p} v\right) dr - \int_{0}^{+\infty} r^{n-1} \zeta U'' v dr}_{I_{4}(v)},
\end{align*}
where \(U\) denotes \(U_{\rho,\e}(r - \rho)\). As the support of \(\zeta'\) lies in the interval $\left[\frac{C_1}{16\e^3},\frac{C_1}{8\e^3}\right]$ and \(U\) decays exponentially to zero as \(r \to \infty\), we deduce that 
\begin{equation}\label{A_2}
\sup\left\{|I_3(v)| : \|v\|_{H_{r}^{1}} \leq 1\right\} \lesssim e^{-c/\varepsilon^3}\lesssim\e^3\|z\|.     
\end{equation}

Lastly, by \eqref{U_rhovar} we infer 
\[
I_4(v) = \e^2\int_{0}^{+\infty} r^{n-1} (V(\varepsilon r) - V(\varepsilon \rho)) \zeta U v dr =\e^2 \int_{0}^{+\infty} r^{n-1} (V(\varepsilon r) - V(\varepsilon \rho)) z v dr.
\]
In view of \(V'\) being bounded,
\[
|I_4(v)| \leq C \e^3\|v\|_{H_{r}^{1}} \left( \int_{0}^{+\infty} r^{n-1} z^{2} dr \right)^{1/2}.
\]
It is further straightforward to deduce that
\begin{equation}\label{A_3}
\sup\left\{|I_4(v)| : \|v\|_{H_{r}^{1}} \leq 1\right\} \lesssim \varepsilon^3\|z\| . \end{equation}
Combining \eqref{A_0}, \eqref{A_2}, and \eqref{A_3}, we readily establish the assertion (A1).

\textbf{Proof of (A2).} For any \(v \in H_{r}^{1}\), we first estimate 
\[
\begin{aligned} 
|J_{\varepsilon}''(z + s \omega)[v, v]| & = \left| \int_{0}^{+\infty} r^{n-1}\left(|v'|^{2} + (1+\e^2V(\varepsilon r)) v^{2} - p |z + s \omega|^{p-1} v^{2}\right) dr \right| \\ 
& \leq \|v\|_{H_{r}^{1}}^{2} + p \left| \int_{0}^{+\infty} r^{n-1} |z + s \omega|^{p-1} v^{2} dr \right|.
\end{aligned}
\]

Seeing that \eqref{wzhudian} entails \(|z(r) + s \omega(r)| \leq C\), this in turn implies that 
\[
\left| \int_{0}^{+\infty} r^{n-1} |z + s \omega|^{p-1} v^{2} dr \right| \leq C \|v\|_{H_{r}^{1}}^{2}.
\]
We obtain \(|J_{\varepsilon}''(z + s \omega)[v, v]| \leq C \|v\|_{H_{r}^{1}}^{2}\), whence we conclude the proof of (A2).

\textbf{Proof of (A3).} Using the mean value theorem 
\[
J_{\varepsilon}'(z + \omega) = J_{\varepsilon}'(z) + \int_{0}^{1} J_{\varepsilon}''(z + s \omega)[\omega] ds,
\]
we compute 
\[
\|J_{\varepsilon}'(z + \omega)\| \leq \|J_{\varepsilon}'(z)\| + \sup_{0 \leq s \leq 1} \|J_{\varepsilon}''(z + s \omega)\| \cdot \|\omega\|.
\]
Invoking (A1), (A2) and \eqref{wfanshu}, we obtain the result (A3).

\textbf{Proof of (A4).} We additionally obtain
\[
\begin{aligned} 
|J_{\varepsilon}''(z + s \omega)[v, v] - J_{\varepsilon}''(z)[v, v]| & = \left| \int_{0}^{+\infty} r^{n-1}\left(p |z + s \omega|^{p-1} - p z^{p-1}\right) v^{2} dr \right| \\ 
& \leq p \int_{0}^{+\infty} r^{n-1} \left( (|z| + |\omega|)^{p-1} - z^{p-1} \right) v^{2} dr \\ 
& \leq C \int_{0}^{+\infty} \left( |\omega| + |\omega|^{p-1} \right) r^{n-1} v^{2} dr.
\end{aligned}
\]
By applying \eqref {wzhudian}, we derive
\[
|J_{\varepsilon}''(z + s \omega)[v, v] - J_{\varepsilon}''(z)[v, v]| \leq C \left( \varepsilon^3 + \varepsilon^{3(p-1)} \right) \|v\|_{H_{r}^{1}}^{2},
\]
giving the desired result (A4).

\textbf{Proof of (A5).} Similarly, using the mean value theorem for the difference of first derivatives:
\[
J_{\varepsilon}'(z + \omega) - J_{\varepsilon}'(z) = \int_{0}^{1} J_{\varepsilon}''(z + s \omega)[\omega] ds.
\]
As a consequence, 
\[
J_{\varepsilon}'(z + \omega) - J_{\varepsilon}'(z) - J_{\varepsilon}''(z)[\omega] = \int_{0}^{1} \left( J_{\varepsilon}''(z + s \omega) - J_{\varepsilon}''(z) \right)[\omega] ds.
\]
Furthermore, by employing (A4), we can easily verify (A5).
\end{proof}

\smallskip

\begin{Rem}\label{rem3.4}

As for the estimate (A1), when $n=2$ we obtain
\[
\|J_{\varepsilon}'(z_{\rho,\varepsilon})\|
\,\lesssim\, \varepsilon^{3}\,\|z_{\rho,\varepsilon}\|_{H_{r}^{1}}
\,\sim\, \varepsilon^{3/2},
\]
which is a small quantity as $\varepsilon\to0$. In this case one can follow
the contraction mapping scheme of \cite{gtzcv}. When $n\ge3$, however, the
same estimate yields a bound of order $\varepsilon^{(9-3n)/2}$, which is no
longer small as $\varepsilon\to0$ (it is of order $1$ for $n=3$ and blows up
for $n>3$). Hence one cannot directly use (A1) to define the fixed point set
and carry out the finite-dimensional reduction as in \cite{gtzcv}. To
overcome this difficulty, we refine the approximate solution and introduce a
new fixed point set as in \eqref{C_edingyi}, for which we derive uniformly
bounded estimates for $\omega$, see \eqref{wzhudian}. This allows us to
recover a contraction mapping argument also in higher dimensions.

\end{Rem}
\medskip

\subsection{Fixed point of \(\mathcal{A}_{\varepsilon}\) }
This section focuses on the contraction mapping argument on \(E_{\varepsilon}\). Our goal is to find a function \(\omega=\omega_{\rho,\e}\) satisfying the following two conditions 
\begin{itemize}
\item[i)] \(PJ_{\varepsilon}'(z + \omega) =0\),
\item[ii)] \(\omega \in W:=(T_{z} Z)^{\perp}\).
\end{itemize}
\medskip

We start by obtaining an estimate on the term $\dot{z}$. Since \(z_{\rho, \varepsilon}(r) = \zeta_{\varepsilon}(r) U_{1+\e^2V(\varepsilon \rho)}(r - \rho)\), it holds that
\[
\frac{\partial}{\partial \rho} z_{\rho, \varepsilon} = \zeta_{\varepsilon}(r) \left( \varepsilon^3 V'(\varepsilon \rho) \left. \left( \frac{\partial}{\partial V} U_{V} \right) \right|_{1+\e^2V(\varepsilon \rho)} (r - \rho) - \frac{\partial}{\partial r} U_{1+\e^2V(\varepsilon \rho)}(r - \rho) \right).
\]
By applying the fact that exponential decay of \(U_{\rho,\e}\) and $\rho\sim\e^{-3}$, we get
\begin{equation}\label{dot{z}}
\left\| \frac{\partial}{\partial \rho} z_{\rho, \varepsilon} \right\|^{2} \sim \rho^{n-1} \int_{0}^{+\infty} \left( U_{1+\e^2V(\varepsilon \rho)}''^{2} + (1+\e^2V(\e r))U_{1+\e^2V(\varepsilon \rho)}'^{2} \right) \sim \frac{1}{\varepsilon^{3(n-1)}}. 
\end{equation}
Note that the function $\dot{z}$ and its second-order derivatives are uniformly bounded and exhibit exponential decay away from $\rho$.

\medskip

Firstly, we prove that $J_{\varepsilon}''(z)$ is coercive on the orthogonal complement in $H_r^1$ of $\{t z\} \oplus T_z Z$.

\smallskip
\begin{Lem} \label{lem4.1}
There exists a positive constant \(\delta\) satisfying that, for every \(\rho \in \Omega_\e\) and for \(\varepsilon\) sufficiently small, the following holds
\[
J_{\varepsilon}''(z)[u, u] \geq \delta \|u\|^{2}, \text{ for all } u \perp \{t z\} \oplus T_{z} Z.
\]
\end{Lem}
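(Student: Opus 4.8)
The plan is to argue by contradiction, in the spirit of the concentration–compactness arguments used for finite–dimensional reductions (compare \cite{cmp,gtzcv}), but with a translation \emph{and} a rescaling adapted to the radial weight $r^{n-1}$ and to the fact that the concentration radius satisfies $\rho\sim\varepsilon^{-3}\to+\infty$. Suppose the assertion fails; then there exist $\varepsilon_k\to0$, $\rho_k\in\Omega_{\varepsilon_k}$ and $u_k\in H^1_r$ with $\|u_k\|_{H^1_r}=1$, $u_k\perp\{t z_k\}\oplus T_{z_k}Z$ where $z_k:=z_{\rho_k,\varepsilon_k}$, and $J_{\varepsilon_k}''(z_k)[u_k,u_k]<1/k$. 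Since $J_\varepsilon''(z)[v,v]=\|v\|_{H^1_r}^2-p\int_0^{+\infty}r^{n-1}z^{p-1}v^2\,dr$ and $\|u_k\|_{H^1_r}=1$, this reads $p\int_0^{+\infty}r^{n-1}z_k^{p-1}u_k^2\,dr>1-1/k$; the goal is to produce a limiting profile $\bar w$ on $\mathbb{R}$ for which $p\int_{\mathbb{R}}U_*^{p-1}\bar w^2\,ds<1$, a contradiction.

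First I localize and rescale. From $\|u_k\|_{H^1_r}=1$ and $1+\varepsilon_k^2V\ge\lambda_0^2$ one has $\int_0^{+\infty}r^{n-1}|u_k'|^2\,dr\le1$ and $\int_0^{+\infty}r^{n-1}u_k^2\,dr\le\lambda_0^{-2}$; combined with the exponential decay $z_k(r)\le Ce^{-\lambda_0|r-\rho_k|}$ of \eqref{zhishushuaijianxingzhi}, the part of $\int r^{n-1}z_k^{p-1}u_k^2\,dr$ over $\{|r-\rho_k|>R\}$ is $\le Ce^{-\lambda_0(p-1)R}$ uniformly in $k$. On $\{|r-\rho_k|\le R\}$ I substitute $r=\rho_k+s$ and set $\bar w_k(s):=\rho_k^{(n-1)/2}u_k(\rho_k+s)$. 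Since $\rho_k\in\Omega_{\varepsilon_k}$, the cut-off $\zeta_{\varepsilon_k}$ equals $1$ there for $k$ large, and $\beta_k:=\beta_{\varepsilon_k,\rho_k}$ stays in a compact subset of $(0,+\infty)$ ($V'$ bounded and $\varepsilon_k\rho_k\lesssim\varepsilon_k^{-2}$ give $\varepsilon_k^2V(\varepsilon_k\rho_k)=O(1)$, while $\inf(1+\varepsilon_k^2V)\ge\lambda_0^2$ bounds it from below); along a subsequence $\beta_k\to\beta_*\in[\lambda_0,+\infty)$ and $U_{\rho_k,\varepsilon_k}=Q_{\beta_k}\to Q_{\beta_*}=:U_*$ in $C^1_{\loc}(\mathbb{R})$. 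The a priori bounds give $\|\bar w_k\|_{H^1(-R,R)}\le C_R$ for every $R$ (the weight $r^{n-1}\sim\rho_k^{n-1}$ on $\{|r-\rho_k|\le R\}$), so up to a further subsequence $\bar w_k\rightharpoonup\bar w$ in $H^1(-R,R)$ for each $R$ and strongly in $L^2_{\loc}(\mathbb{R})$. Using $(\rho_k+s)^{n-1}/\rho_k^{n-1}\to1$ locally uniformly and letting $k\to\infty$ and then $R\to\infty$ gives $p\int_0^{+\infty}r^{n-1}z_k^{p-1}u_k^2\,dr\to p\int_{\mathbb{R}}U_*^{p-1}\bar w^2\,ds$, so $p\int_{\mathbb{R}}U_*^{p-1}\bar w^2\,ds\ge1$ and in particular $\bar w\not\equiv0$. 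Moreover, since the contribution of $\{|r-\rho_k|\le R\}$ to $\|u_k\|_{H^1_r}^2$ never exceeds $1$, the same change of variables and weak lower semicontinuity of the gradient part yield $\int_{|s|\le R}(|\bar w'|^2+\beta_*^2\bar w^2)\,ds\le1$ for every $R$, hence the decisive bound $\int_{\mathbb{R}}(|\bar w'|^2+\beta_*^2\bar w^2)\,ds\le1$.

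Next I pass the two orthogonality relations to the limit. Truncating $(u_k,z_k)_{H^1_r}=0$ and $(u_k,\dot z_k)_{H^1_r}=0$ to $\{|r-\rho_k|\le R\}$ (the discarded tails are $O(\rho_k^{(n-1)/2}e^{-cR})$, since $z_k,z_k',\dot z_k,\dot z_k'$ decay exponentially away from $\rho_k$ and $\operatorname{supp}\zeta'_{\varepsilon_k}$ lies at distance $\gtrsim\varepsilon_k^{-3}$ from $\rho_k$), rescaling by $\rho_k^{-(n-1)/2}$, and using $\dot z_k=-\zeta_{\varepsilon_k}U_{\rho_k,\varepsilon_k}'(\cdot-\rho_k)+O(\varepsilon_k^3)$ with the remainder also exponentially localized, I obtain in the limit $\int_{\mathbb{R}}(\bar w'U_*'+\beta_*^2\bar wU_*)\,ds=0$ and $\int_{\mathbb{R}}(\bar w'U_*''+\beta_*^2\bar wU_*')\,ds=0$; that is, $\bar w\perp U_*$ and $\bar w\perp U_*'$ in $H^1(\mathbb{R})$ with the inner product $(f,g)_{\beta_*}:=\int_{\mathbb{R}}(f'g'+\beta_*^2fg)\,ds$. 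The non-degeneracy of $\mathcal{Q}_0$ (Proposition~\ref{prop2.111}) is invariant under the scaling $Q_{\beta_*}(s)=\beta_*^{2/(p-1)}Q(\beta_*s)$, so it supplies a constant $C>0$ with $\bar{J}_{\beta_*}''(U_*)[v,v]\ge C^{-1}\|v\|_{\beta_*}^2$ for all $v$ orthogonal to $U_*$ and to $U_*'$ in $(\cdot,\cdot)_{\beta_*}$. Taking $v=\bar w$ and using $\bar{J}_{\beta_*}''(U_*)[\bar w,\bar w]=\|\bar w\|_{\beta_*}^2-p\int_{\mathbb{R}}U_*^{p-1}\bar w^2\,ds$ together with $\|\bar w\|_{\beta_*}^2\le1$ gives $p\int_{\mathbb{R}}U_*^{p-1}\bar w^2\,ds\le(1-C^{-1})\|\bar w\|_{\beta_*}^2\le1-C^{-1}<1$, contradicting $p\int_{\mathbb{R}}U_*^{p-1}\bar w^2\,ds\ge1$. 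This proves the lemma, and tracking the constants produces an explicit admissible $\delta$, uniform in $\rho\in\Omega_\e$.

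The main obstacle is that two distinct sources of non-compactness have to be controlled simultaneously: the mass of $u_k$ may escape to $r=+\infty$ along the half-line — handled by the localization together with the bound $\int_{\mathbb{R}}(|\bar w'|^2+\beta_*^2\bar w^2)\,ds\le1$, which prevents the limiting mass from exceeding the total mass — and the equation is posed on an unbounded domain carrying the \emph{growing} weight $r^{n-1}$ — handled by the rescaling $\bar w_k=\rho_k^{(n-1)/2}u_k(\rho_k+\cdot)$ together with $(\rho_k+s)^{n-1}/\rho_k^{n-1}\to1$, which holds precisely because $\rho_k\to+\infty$. A more technical but essential point is to transfer both orthogonality conditions through the translation–rescaling without losing the components along $U_*$ and along $U_*'$, since these are exactly the two directions excluded in Proposition~\ref{prop2.111}.
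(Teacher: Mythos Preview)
Your argument is correct and takes a genuinely different route from the paper. The paper proceeds \emph{directly}: it introduces a cutoff $\xi_{\rho,\mu}$ supported in $[\rho-2\mu,\rho+2\mu]$, splits $u=\xi u+(1-\xi)u$, replaces the weight $r^{n-1}$ by $\rho^{n-1}$ and the potential $1+\varepsilon^{2}V(\varepsilon r)$ by $1+\varepsilon^{2}V(\varepsilon\rho)$ on the support of $\xi$ (incurring errors $O(\mu^{-1}+\varepsilon^{3}\mu)\|u\|^{2}$), and then applies Proposition~\ref{prop2.111} to the localized piece $\xi u$ while the far piece $(1-\xi)u$ is controlled simply because $z^{p-1}$ is exponentially small there; choosing $\mu$ large and $\varepsilon$ small closes the estimate. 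You instead argue by contradiction: you extract a sequence, rescale via $\bar w_k(s)=\rho_k^{(n-1)/2}u_k(\rho_k+s)$, and pass to a weak limit $\bar w$ in $H^{1}_{\loc}(\mathbb{R})$ with $\|\bar w\|_{\beta_*}^{2}\le1$; the orthogonality conditions survive in the limit and Proposition~\ref{prop2.111} forces $p\int U_*^{p-1}\bar w^{2}<1$, contradicting $p\int U_*^{p-1}\bar w^{2}\ge1$. The paper's approach is quantitative and avoids any compactness, yielding an explicit $\delta$ directly; your approach is more conceptual and makes transparent that the only obstruction to coercivity is the one-dimensional kernel $\mathrm{span}\{U_*,U_*'\}$, at the price of a soft constant and a diagonal subsequence argument (which you should state once, since the weak limit $\bar w$ must be the same for all $R$). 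Both rely on exactly the same non-degeneracy input, and both exploit $\rho\sim\varepsilon^{-3}\to+\infty$ so that $(\rho+s)^{n-1}/\rho^{n-1}\to1$ locally uniformly.
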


\begin{proof}
For some \(\mu > 0\) to be determined in what follows. We introduce a cutoff function \(\xi_{\rho, \mu}: \mathbb{R}^{+} \to \mathbb{R}\) which fulfills
\begin{equation*}
\left\{
\begin{array}{ll} 
\xi_{\rho, \mu}(r) = 0, & \text{for } r \in [0, \rho - 2\mu] \cup [\rho + 2\mu, \infty], \\ 
\xi_{\rho, \mu}(r) = 1, & \text{for } r \in [\rho - \mu, \rho + \mu], \\ 
|\xi_{\rho, \mu}'(r)| \leq \frac{2}{\mu}, & \text{for } r \in [\rho - 2\mu, \rho - \mu] \cup [\rho + \mu, \rho + 2\mu], \\ 
|\xi_{\rho, \mu}''(r)| \leq \frac{4}{\mu}, & \text{for } r \in [\rho - 2\mu, \rho - \mu] \cup [\rho + \mu, \rho + 2\mu].
\end{array}
\right. 
\end{equation*}

In what follows, and whenever no confusion can arise, we shall simply denote
$\xi_{\rho,\mu}$ by $\xi$. For any $u \in H^{1}$, we can rewrite it as
\begin{equation*}
\|u\|^{2} = \|\xi u\|^{2} + \|(1 - \xi) u\|^{2} + 2(\xi u, (1 - \xi) u),
\end{equation*}
and 
\begin{equation}
\label{fanhandefenjie}
J_{\varepsilon}''(z)[u, u] = J_{\varepsilon}''(z)[\xi u, \xi u] + J_{\varepsilon}''(z)[(1 - \xi) u, (1 - \xi) u] + 2 J_{\varepsilon}''(z)[\xi u, (1 - \xi) u].
\end{equation}

On the other side, if $u$ is orthogonal to both $z$ and $\dot{z}$, and given that $z$ and $\dot{z}$ exhibit exponential decay (see \eqref{zhishushuaijianxingzhi} and \eqref{dot{z}}), it follows readily from the H\"older inequality that

\begin{equation}
\label{uz}
|(\xi u, z)| = |-((1 - \xi) u, z)| \leq C e^{-\mu} \|z\| \|u\| \leq C e^{-\mu} \varepsilon^{\frac{3-3n}{2}} \|u\|,
\end{equation}
and
\begin{equation}
\label{udotz}
|(\xi u, \dot{z})| = |-((1 - \xi) u, \dot{z})| \leq C e^{-\mu} \|\dot{z}\| \|u\| \leq C e^{-\mu} \varepsilon^{\frac{3-3n}{2}} \|u\|,    
\end{equation}
where \(C\) is a positive constant that is independent of \(\varepsilon\), $u$, \(\mu\), as well as \(\rho \in\Omega_{\varepsilon}\). 

Additionally, since \(V\in C^1(\R^+,\R)\), we have the following relations
\begin{equation}
\label{Vhern-1}
|V(\varepsilon r) - V(\varepsilon \rho)| \leq C \varepsilon \mu, \quad |r^{n-1} - \rho^{n-1}| \leq C \mu \varepsilon^{6-3n}, \text{ for }\, r \in [\rho - 2\mu, \rho + 2\mu].
\end{equation}
Formulas \eqref{uz}, \eqref{udotz} and the second equation in \eqref{Vhern-1} collectively entail that
\begin{equation}
\label{diyige}
\rho^{n-1} \int \left( ((\xi u)')^{2}(r) + (\xi u)^{2}(r) \right) dr = \|\xi u\|^{2} + O\left( \frac{1}{\mu} + \varepsilon^3 \mu \right) \|u\|^{2},
\end{equation}
\begin{equation}
\label{dierge}
\rho^{n-1} \int \left( (\xi u)'(r) U_{1+\e^2V(\varepsilon \rho)}'(r) + (\xi u)(r) U_{1+\e^2V(\varepsilon \rho)}(r) \right) dr = O\left( \frac{1}{\mu} + \varepsilon^3 \mu \right) \|u\|,
\end{equation}
and
\begin{equation}
\label{disange}
\rho^{n-1} \int \left( (\xi u)'(r) U_{1+\e^2V(\varepsilon \rho)}''(r) + (\xi u)(r) U_{1+\e^2V(\varepsilon \rho)}'(r) \right) dr = O\left( \frac{1}{\mu} + \varepsilon^3 \mu \right) \|u\|.
\end{equation}

For arbitrarily chosen \(u(r) \in H^{1}(\mathbb{R})\), we observe that 
\begin{align*}
\begin{split}
J_{\varepsilon}''(z)[u, u]-\rho^{n-1} \bar{J}''(z)[u,u] & = \int_{0}^{\infty} r^{n-1}\left( (u')^{2} +(1+\e^2 V(\varepsilon r)) u^{2} - p z^{p-1} u^{2} \right) dr \\ 
& \quad - \rho^{n-1} \int_{0}^{\infty} \left( (u')^{2} + (1+\e^2V(\varepsilon \rho)) u^{2} - p z^{p-1} u^{2} \right) dr, 
\end{split}\end{align*}
where $\bar{J}(u)$ is the functional corresponding to \eqref{U_rhovar}. Therefore, because of \eqref{Vhern-1} and the decay of \(z\), we see that 
\begin{equation}
\label{fenjiedexiayibu}
\begin{aligned}
J_{\varepsilon}''(z)[\xi u, \xi u] & = \rho^{n-1} \int_{0}^{\infty} \left( ((\xi u)')^{2} + (1+\e^2V(\varepsilon \rho)) (\xi u)^{2} - p z^{p-1} (\xi u)^{2} \right) dr \\ 
& \quad + O\left( \frac{1}{\mu} + \varepsilon^3 \mu \right) \|u\|^{2}.
\end{aligned}
\end{equation} 

Combining with \eqref{diyige}-\eqref{disange}, \eqref{fenjiedexiayibu} and Proposition \ref{prop2.111}, there exists $\delta>0$, such that 
\[
J_{\varepsilon}''(z)[\xi u, \xi u] \geq \delta \|\xi u\|^{2} + O\left( \frac{1}{\mu} + \varepsilon^3 \mu \right) \|u\|^{2}.
\]
Considering the exponential decay of \(z\) again, we also infer that 
\[
J_{\varepsilon}''(z)[(1 - \xi) u, (1 - \xi) u] = \|(1 - \xi) u\|^{2} + O\left( \frac{1}{\mu} + \varepsilon^3 \mu \right) \|u\|^{2},
\]
and
\[
J_{\varepsilon}''(z)[\xi u, (1 - \xi) u] = (\xi u, (1 - \xi) u) + O\left( \frac{1}{\mu} + \varepsilon^3 \mu \right) \|u\|^{2}.
\]

In light of \eqref{fanhandefenjie}, we conclude that 
\[
\begin{aligned} 
J_{\varepsilon}''(z)[u, u] & \geq \delta \|\xi u\|^{2} + \|(1 - \xi) u\|^{2} + 2(\xi u, (1 - \xi) u) + O\left( \frac{1}{\mu} + \varepsilon^3 \mu \right) \|u\|^{2}\\ 
& \geq \delta \|u\|^{2} + O\left( \frac{1}{\mu} + \varepsilon^3 \mu \right) \|u\|^{2}.
\end{aligned}
\]
Consequently, we can choose \(\mu\) sufficiently large, then $J_{\varepsilon}^{\prime\prime}(z)$ is positive definite in \(u\), with \(\varepsilon\) is sufficiently small. We see that Lemma \ref{lem4.1} follows.
\end{proof}

It is simple to check that
\[
J_{\varepsilon}''(z)[z, z] = (1 - p) \int z^{p+1} + o_{\varepsilon}(1).
\]
\smallskip

Combining the above equation with Lemma~\ref{lem4.1}, we obtain
Proposition~\ref{prop4.2}.

\smallskip
\begin{Prop} \label{prop4.2}
Let \(\varepsilon\) be sufficiently small and for any \(\rho \in \Omega_\e\), the operator \(P J_{\varepsilon}''(z)\) is invertible on \(W\) with uniformly bounded inverse. That is to say, it holds that
\[
\|L_{\varepsilon}\| \leq C, \text{ where } L_{\varepsilon} := -\left(P J_{\varepsilon}''(z)\right)^{-1},
\]
where \(C\) is a positive constant independent of $\e$.
\end{Prop}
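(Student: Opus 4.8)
\textbf{Proof proposal for Proposition~\ref{prop4.2}.}
The plan is to deduce invertibility of $PJ_\varepsilon''(z)$ on $W=(T_zZ)^\perp$ from the coercivity estimate of Lemma~\ref{lem4.1} together with the one-dimensional sign information $J_\varepsilon''(z)[z,z]=(1-p)\int z^{p+1}+o_\varepsilon(1)<0$. The key point is that $H_r^1$ splits (up to the exponentially small errors recorded in \eqref{uz}--\eqref{udotz}) into the $T_zZ$ direction, the one-dimensional line $\{tz\}$, and the remaining subspace on which $J_\varepsilon''(z)$ is positive definite with a gap $\delta$ independent of $\varepsilon$ and $\rho$. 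On $W$ the $T_zZ$ direction is removed, so $J_\varepsilon''(z)$ restricted to $W$ has exactly one ``bad'' direction, namely (the $W$-component of) $z$, on which it is uniformly negative definite, and is uniformly positive on a complementary subspace.

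First I would make this decomposition precise: write $W = \mathbb{R}\,\tilde z \oplus W_0$, where $\tilde z = Pz$ is the projection of $z$ onto $W$ (note $\|z - \tilde z\|$ is exponentially small since $z$ is almost orthogonal to $T_zZ$, by the decay of $\dot z$ in \eqref{dot{z}} and an argument like \eqref{udotz}) and $W_0 = W \cap \{z\}^\perp$. By Lemma~\ref{lem4.1}, $J_\varepsilon''(z)[u,u]\ge \delta\|u\|^2$ for $u\in W_0$, since such $u$ is orthogonal to both $z$ and $T_zZ$. On the line $\mathbb{R}\tilde z$ one has $J_\varepsilon''(z)[\tilde z,\tilde z] = J_\varepsilon''(z)[z,z] + o_\varepsilon(1) = (1-p)\int z^{p+1} + o_\varepsilon(1)$, and since by \eqref{pohU}-type scaling $\int z^{p+1}\sim \|z\|^2 \sim \varepsilon^{3-3n}$ after normalization — more precisely, in the normalized quadratic form the relevant ratio is bounded below — the bilinear form is $\le -c\|\tilde z\|^2$ for some $c>0$. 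The cross term $J_\varepsilon''(z)[\tilde z, u]$ for $u \in W_0$ is controlled using the exponential decay of $z$ and $z^{p-1}$ together with the near-orthogonality, and is $o_\varepsilon(1)\|\tilde z\|\|u\|$; hence for $\varepsilon$ small the quadratic form $J_\varepsilon''(z)$ is non-degenerate on $W=\mathbb{R}\tilde z\oplus W_0$ with the splitting almost diagonal. Consequently $PJ_\varepsilon''(z):W\to W$ is an isomorphism, and on each of the two pieces the form is bounded below in absolute value by a constant multiple of the squared norm, uniformly in $\varepsilon$ and $\rho\in\Omega_\varepsilon$.

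For the uniform bound on $L_\varepsilon = -(PJ_\varepsilon''(z))^{-1}$, I would argue that for any $w\in W$, writing $w = s\tilde z + w_0$ with $w_0\in W_0$, the estimate $\big|\langle PJ_\varepsilon''(z)w, \tilde z\rangle\big|$ and $\big|\langle PJ_\varepsilon''(z)w, w_0\rangle\big|$ together with the almost-diagonal structure force $\|PJ_\varepsilon''(z)w\|\ge c\|w\|$ for a constant $c>0$ independent of $\varepsilon$; combined with the uniform upper bound $\|PJ_\varepsilon''(z)\|\le \|J_\varepsilon''(z)\|\le C$ from (A2) this yields $\|L_\varepsilon\|\le C$.

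The main obstacle I anticipate is making the two error sources — the exponentially small failure of orthogonality between $z$, $\dot z$ and the cutoff pieces, and the $o_\varepsilon(1)$ in $J_\varepsilon''(z)[z,z]$ — genuinely uniform in $\rho\in\Omega_\varepsilon$; this requires that all the decay constants ($\lambda_0$, $\mu$, etc.) and the constants in Lemma~\ref{lem4.1} be chosen independently of $\rho$, which in turn rests on the uniform lower bound $1+\varepsilon^2V(\varepsilon r)\ge\lambda_0^2$ and on $\rho\sim\varepsilon^{-3}$ throughout $\Omega_\varepsilon$. A secondary technical point is verifying that $\int z^{p+1}$ stays comparable to $\|z\|^2$ uniformly, so that the negative direction does not degenerate; this follows from the scaling $U_{\rho,\varepsilon}(r)=\beta_{\varepsilon,\rho}^{2/(p-1)}Q(\beta_{\varepsilon,\rho}r)$, the Pohozaev identity \eqref{pohU}, and $\beta_{\varepsilon,\rho}\in[\lambda_0, C]$.
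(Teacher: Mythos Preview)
Your proposal is correct and follows essentially the same approach as the paper: the paper's proof is extremely terse, stating only that the identity $J_\varepsilon''(z)[z,z]=(1-p)\int z^{p+1}+o_\varepsilon(1)$ combined with Lemma~\ref{lem4.1} yields Proposition~\ref{prop4.2}, and you have simply fleshed out the underlying mechanism (the splitting $W=\mathbb{R}\tilde z\oplus W_0$, the uniform negative direction along $\tilde z$, coercivity on $W_0$, and smallness of the cross terms). The technical points you flag---uniformity in $\rho\in\Omega_\varepsilon$ and the comparability $\int z^{p+1}\sim\|z\|^2$---are exactly what makes the paper's one-line deduction legitimate.
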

\medskip

We now proceed to prove  $\omega=\mathcal{A}_{\varepsilon}(\omega)$ is solvable on \(E_{\varepsilon}\) provided \(\varepsilon\) is small enough. 
\begin{proposition} \label{prop4.3}
There exists a positive constant \(\gamma\), such that for \(\varepsilon\) sufficiently small and  \(\rho \in \Omega_{\varepsilon}\), there exists a function \(\omega = \omega(z_{\rho, \varepsilon})\) satisfying
\begin{itemize}
\item[i)] $P J_{\varepsilon}'(z + \omega) = 0$,
\item[ii)] $\omega\in W$,
\item[iii)] \(\omega \in E_{\varepsilon} = \left\{\omega \in H_{r}^{1} : \|\omega\| \leq \gamma \varepsilon^3 \|z\|, |\omega(r)| \leq \gamma e^{-\lambda_{1}(\rho- r)} \text{ for } r \in [0, \rho]\right\}\),
\item[iv)] \(\|\omega\| \leq C \|J_{\varepsilon}'(z_{\rho})\|\).
\end{itemize}
Besides, \(\omega\) is of class \(C^{1}\) with respect to \(\rho\), and \(\|\partial \omega / \partial \rho\| = o(\|\dot{z}\|)\) as $\e\to0$.
\end{proposition}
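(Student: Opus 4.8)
The plan is to obtain $\omega=\omega_{\rho,\varepsilon}$ as the unique fixed point of $\mathcal{A}_{\varepsilon}$ on $E_{\varepsilon}$ by the contraction mapping principle, and then to upgrade this to a $C^{1}$ family in $\rho$ via the implicit function theorem. By Proposition~\ref{prop4.2}, $L_{\varepsilon}=-\big(PJ_{\varepsilon}''(z)\big)^{-1}$ is well defined on $W$ with $\|L_{\varepsilon}\|\le C$ uniformly in $\rho\in\Omega_{\varepsilon}$, so $\mathcal{A}_{\varepsilon}(\omega)=L_{\varepsilon}\big(PJ_{\varepsilon}'(z)+PH_{\omega}\big)$ is well defined, takes values in $W$ (hence ii) is automatic), and any fixed point of it solves $PJ_{\varepsilon}'(z+\omega)=0$ (hence i)). Moreover $E_{\varepsilon}$ is a closed, bounded, convex subset of $H^{1}_{r}$: by \eqref{jingxiangjiedeguji} any $H^{1}_{r}$-convergent sequence converges pointwise off the origin, so the pointwise bound in \eqref{C_edingyi} passes to the limit. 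Thus it suffices to prove (a) $\mathcal{A}_{\varepsilon}(E_{\varepsilon})\subset E_{\varepsilon}$ and (b) $\mathcal{A}_{\varepsilon}$ is a contraction on $E_{\varepsilon}$.

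For (a) I treat the two constraints in the definition of $E_{\varepsilon}$ separately. The norm bound is routine: using $\|L_{\varepsilon}\|\le C$, estimate (A1) (so $\|PJ_{\varepsilon}'(z)\|\lesssim\varepsilon^{3}\|z\|$) and estimate (A5) together with $\|\omega\|\le\gamma\varepsilon^{3}\|z\|$ (so $\|H_{\omega}\|\lesssim\gamma\varepsilon^{3+(3\wedge 3(p-1))}\|z\|$), one gets $\|\mathcal{A}_{\varepsilon}(\omega)\|\le C\varepsilon^{3}\|z\|\big(1+C\gamma\varepsilon^{3\wedge 3(p-1)}\big)$, which is $\le\gamma\varepsilon^{3}\|z\|$ once $\gamma$ is fixed large (e.g.\ $\gamma=2C$) and $\varepsilon$ is small; the same estimate yields iv). The pointwise bound is the delicate point. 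Setting $\omega_{1}:=\mathcal{A}_{\varepsilon}(\omega)$, one checks that $\omega_{1}$ solves, in the radial weak sense,
\[
-\omega_{1}''-\tfrac{n-1}{r}\,\omega_{1}'+\big(1+\varepsilon^{2}V(\varepsilon r)-p\,z^{p-1}\big)\omega_{1}=g_{\rho},
\]
where $g_{\rho}$ collects the functions representing $J_{\varepsilon}'(z)$ and the nonlinear remainder $H_{\omega}$, together with a multiple of $\dot z$ coming from the projection $P$. From the computations behind (A1), the $J_{\varepsilon}'(z)$-part is pointwise $\lesssim\varepsilon^{3}e^{-\lambda'|r-\rho|}$ for any fixed $\lambda'<\lambda_{0}$; the $\dot z$-part is $\lesssim\varepsilon^{3}e^{-\lambda_{0}|r-\rho|}$ (the scalar $\alpha$ in \eqref{alphadefangcheng} is $O(\varepsilon^{3})$, by testing against $\dot z$ and using (A3) and \eqref{dot{z}}); and on $[0,\rho]$ the term from $H_{\omega}$ is $\lesssim e^{-\min\{p,2\}\lambda_{1}(\rho-r)}$, which, thanks to $\min\{p,2\}\lambda_{1}>\lambda_{0}$, is dominated by $e^{-\lambda_{0}(\rho-r)}\le e^{-\lambda_{1}(\rho-r)}$. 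A comparison argument then gives the claim: for $\rho-r\ge K$ ($K$ a fixed large constant) the coefficient $z^{p-1}$ is uniformly exponentially small, so the zeroth-order coefficient is $\ge\lambda_{0}^{2}-o(1)>\lambda_{1}^{2}+\tfrac{n-1}{r}\lambda_{1}$ on the relevant range, making $\phi(r)=\gamma e^{-\lambda_{1}(\rho-r)}$ a supersolution dominating $|g_{\rho}|$ once $\gamma$ is fixed, $K$ is large and $\varepsilon$ small; on the bounded region $\rho-K\le r\le\rho$ one uses the energy bound and \eqref{jingxiangjiedeguji} to get $|\omega_{1}(r)|\le C\varepsilon^{3}\le\gamma e^{-\lambda_{1}K}$, which also supplies the matching value at $r=\rho-K$; and on a fixed bounded interval $[0,r_{0}]$ (with $r_{0}$ independent of $\varepsilon$, where $z\equiv0$) the equation is linear with coercive coefficient and super-exponentially small source, forcing $|\omega_{1}|$ to be even smaller there. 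Possibly enlarging $\gamma$, this shows $\omega_{1}\in E_{\varepsilon}$.

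For (b), given $\omega_{1},\omega_{2}\in E_{\varepsilon}$ write $\mathcal{A}_{\varepsilon}(\omega_{1})-\mathcal{A}_{\varepsilon}(\omega_{2})=L_{\varepsilon}P(H_{\omega_{1}}-H_{\omega_{2}})$ with $H_{\omega_{i}}=\int_{0}^{1}\big(J_{\varepsilon}''(z+s\omega_{i})-J_{\varepsilon}''(z)\big)[\omega_{i}]\,ds$; the uniform bounds $|\omega_{i}|\le C\varepsilon^{3}$ from \eqref{wzhudian} combined with (A4) give $\|H_{\omega_{1}}-H_{\omega_{2}}\|\lesssim\varepsilon^{3\wedge 3(p-1)}\|\omega_{1}-\omega_{2}\|$, so $\mathcal{A}_{\varepsilon}$ is a contraction of constant $\le\tfrac12$ for $\varepsilon$ small. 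Banach's fixed point theorem produces the unique $\omega=\omega_{\rho,\varepsilon}\in E_{\varepsilon}$, establishing i)--iv). For the $C^{1}$-dependence I would apply the implicit function theorem to $F(\rho,\omega):=PJ_{\varepsilon}'(z_{\rho,\varepsilon}+\omega)$, which is $C^{1}$ in $(\rho,\omega)$ since $J_{\varepsilon}\in C^{2}$, $\rho\mapsto z_{\rho,\varepsilon}$ is smooth and $\rho\mapsto P_{\rho,\varepsilon}$ is $C^{1}$; at the fixed point $\partial_{\omega}F=P\big(J_{\varepsilon}''(z)+\int_{0}^{1}(J_{\varepsilon}''(z+s\omega)-J_{\varepsilon}''(z))\,ds\big)$ is invertible with bounded inverse by (A4) and Proposition~\ref{prop4.2}, so $\omega_{\rho,\varepsilon}\in C^{1}$; differentiating $F(\rho,\omega_{\rho,\varepsilon})=0$ yields $\partial_{\rho}\omega=-(\partial_{\omega}F)^{-1}\big[(\partial_{\rho}P)J_{\varepsilon}'(z+\omega)+PJ_{\varepsilon}''(z+\omega)[\dot z]\big]$, and since $J_{\varepsilon}'(z+\omega)$ is small by (A3), $\|\partial_{\rho}P\|=O(1)$, and $J_{\varepsilon}''(z)[\dot z]=o(\|\dot z\|)$ because $\dot z$ is, up to the cut-off and an $O(\varepsilon^{3})$ correction, an exact kernel element of the one-dimensional linearized operator, one gets $\|\partial_{\rho}\omega\|=o(\|\dot z\|)$, i.e.\ \eqref{C_1}.

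\textbf{Main obstacle.} The heart of the argument is step (a) for the pointwise constraint: one must leave the energy framework, pass to the pointwise ODE satisfied by $\mathcal{A}_{\varepsilon}(\omega)$, and run a barrier/maximum-principle argument whose decay rates are tuned precisely through $\lambda_{1}=\lambda_{0}-\eta$ and $\min\{p,2\}\lambda_{1}>\lambda_{0}$ — this is exactly the new ingredient that makes the reduction work for all $p>1$ in higher dimensions (cf.\ Remark~\ref{rem3.4}). By contrast, the norm bound, the contraction estimate and the $C^{1}$-dependence follow fairly directly from (A1)--(A5), \eqref{wzhudian} and Proposition~\ref{prop4.2}.
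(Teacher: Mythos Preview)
Your overall architecture is exactly the paper's: fixed point of $\mathcal A_\varepsilon$ on $E_\varepsilon$, norm bound from (A1)+(A5), contraction from (A4), and $C^1$-dependence via the implicit function theorem. These parts are fine and match the paper almost line for line.

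The one place where your sketch does not close is the pointwise bound, precisely at the inner boundary. Your barrier $\phi(r)=\gamma e^{-\lambda_1(\rho-r)}$ satisfies
\[
-\phi''-\tfrac{n-1}{r}\phi'+c(r)\phi=\Big(c(r)-\lambda_1^2-\tfrac{(n-1)\lambda_1}{r}\Big)\phi,
\]
and the bracket becomes negative for $r$ small, so $\phi$ is \emph{not} a supersolution on the full ball. You compensate by working on an annulus $[r_0,\rho-K]$ and treating $[0,r_0]$ separately, but this is circular: the maximum principle on the annulus needs $|\omega_1(r_0)|\le\phi(r_0)=\gamma e^{-\lambda_1(\rho-r_0)}$ (a super-exponentially small number), while your argument on $[0,r_0]$ only says $|\omega_1|\le|\omega_1(r_0)|+\text{(super-small)}$ there. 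Strauss at $r=r_0$ gives merely $|\omega_1(r_0)|\lesssim\varepsilon^{(9-3n)/2}$, which is useless for $n\ge3$. So the two pieces never match up.

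The paper avoids this by working on the whole ball $B_{\rho-N}$ in $\mathbb R^n$ and splitting $\tilde\omega=\tilde\omega_1+\tilde\omega_2$: the homogeneous part $\tilde\omega_1$ carries the boundary data and decays into the ball by the genuine radial Bessel solution of $-\Delta+\lambda^2$ (which is regular at the origin, unlike $e^{\lambda_1 r}$); the particular part $\tilde\omega_2$ is controlled by convolving the source $J_1+J_2+J_3$ with the $\mathbb R^n$ Green's function of $-\Delta+\lambda_2^2$, split into near/far regions $X_1,X_2$. This gives $|\tilde\omega(r)|\le \overline C_0(1+\gamma^{2\wedge p})e^{-\lambda_3(\rho-r)}+\tilde C\varepsilon^3 e^{-\lambda_1(\rho-r)}$ on all of $[0,\rho-N]$, and then one chooses $N$ large (after fixing $\gamma$) so that the first term is $\le\gamma e^{-\lambda_1(\rho-r)}$. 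Replacing your exponential barrier by the modified Bessel function $r^{-(n-2)/2}I_{(n-2)/2}(\lambda_1 r)$, or simply adopting the paper's Green-function decomposition, would repair your argument with no change to the rest of the proof.
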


\begin{proof}

Step 1. Contraction mapping.

Applying (A1), (A5) and Proposition \ref{prop4.2}, we can write
\begin{align}
\label{sepwdefanshu}
\begin{split}
\|\mathcal{A}_{\varepsilon}(\omega)\| &\leq C(\|J_{\varepsilon}'(z_{\rho})\|+\|H_w\|) \\&\leq C\left( \e^3\|z\| + \varepsilon^{3 \wedge 3(p-1)} \|\omega\| \right) \\ 
&\leq C\left( \varepsilon^3 \|z\| + \varepsilon^{3 \wedge 3(p-1)} \gamma\e^3\|z\| \right)\\
&=C_0\e^3\|z\|(1+\gamma\varepsilon^{3 \wedge 3(p-1)}), \,\text{for}\, \omega \in E_{\varepsilon}. 
\end{split}
\end{align}
So it is true that for $\gamma$ large enough and $\e$ small enough,
$$\|\mathcal{A}_{\varepsilon}(\omega)\|\leq\gamma\e^3\|z\|.$$

Next,  for any \(\omega_1\), \(\omega_2\in E_{\varepsilon}\), we show
\[
\mathcal{A}_{\varepsilon}(\omega_1) - \mathcal{A}_{\varepsilon}(\omega_2) = L_{\varepsilon}\left( P J_{\varepsilon}'(z + \omega_1) - P J_{\varepsilon}''(z)[\omega_1] - P J_{\varepsilon}'(z + \omega_2) + P J_{\varepsilon}''(z)[\omega_2] \right).
\]
By making use of
\[
\begin{aligned} 
& J_{\varepsilon}'(z + \omega_1) - J_{\varepsilon}''(z)[\omega_1] - J_{\varepsilon}'(z +\omega_2) + J_{\varepsilon}''(z)[\omega_2] \\ 
=& \int_{0}^{1} \left( J_{\varepsilon}''(z +\omega_2 + s(\omega_1 - \omega_2)) - J_{\varepsilon}''(z) \right)[\omega_1 - \omega_2] ds,
\end{aligned}
\]
and following the same line of reasoning as in the proof of (A4),
\[
\left\|J_{\varepsilon}'(z + \omega_1) - J_{\varepsilon}''(z)[\omega_1] - J_{\varepsilon}'(z +\omega_2) + J_{\varepsilon}''(z)[\omega_2] \right\| \leq C \varepsilon^{3 \wedge 3(p-1)} \|\omega_1 - \omega_2\|.
\]
From the above arguments, we conclude that

\[
\|\mathcal{A}_{\varepsilon}(\omega_1) - \mathcal{A}_{\varepsilon}(\omega_2)\| \leq C \varepsilon^{3 \wedge 3(p-1)} \|\omega_1 - \omega_2\|.
\]

With the norm estimates at hand, it remains to prove that

\[
\omega \in E_{\varepsilon} \Rightarrow |(\mathcal{A}_{\varepsilon} \omega)(r)| \leq \gamma e^{-\lambda_{1}(\rho - r)} \text{ for } 0 \leq r \leq \rho.
\]
What is more, in this part, we will provide a more precise range for $\gamma$.

\medskip

Step 2. Pointwise estimate.

Employing the definition of \(L_{\varepsilon}\), it is immediately apparent that for any \(f \in H_{r}^{1}\), the following holds
\begin{equation}
\label{jugelizi}
\omega = L_{\varepsilon}(P f) \Leftrightarrow -P J_{\varepsilon}''(z)[\omega] = f - \frac{(f, \dot{z}) \dot{z}}{\|\dot{z}\|^{2}}. 
\end{equation}
Testing the equation \(P J_{\varepsilon}''(z)[\omega] = -f + \|\dot{z}\|^{-2}(f, \dot{z}) \dot{z}\) against any smooth function, we can see that \(\omega\) satisfies the following differential equation,
\begin{equation}\label{lizifangcheng}
-\Delta \omega + (1+\e^2V(\varepsilon r)) \omega - p z^{p-1} \omega = -\Delta(\alpha \dot{z}-f) +(1+\e^2 V(\varepsilon r))( \alpha \dot{z}-f) \text{ in } \mathbb{R}^{n}, 
\end{equation}
where
\[
\alpha = \|\dot{z}\|^{-2} \left( J_{\varepsilon}''(z)[\omega] + f, \dot{z} \right).
\]

As a result, if we define \(H_{z} := J_{\varepsilon}'(z)\) and for any $\omega\in E_{\varepsilon}$,
\[
\tilde{\omega} := \mathcal{A}_{\varepsilon}(\omega) =L_\e P(H_z+H_w).
\]
Combining with \eqref{jugelizi} and \eqref{lizifangcheng}, we shall show that \(\tilde{\omega} \in W\) fulfills the differential equation 
\begin{equation*}
\begin{aligned} 
& -\Delta \tilde{\omega} +(1+\e^2 V(\varepsilon r)) \tilde{\omega} - p z^{p-1} \tilde{\omega} \\ 
=& -\Delta\left(\beta \dot{z}-H_z-H_w\right) +(1+\e^2 V(\varepsilon r))\left(\beta \dot{z}-H_z-H_w\right) \text{ in } \mathbb{R}^{n},
\end{aligned}
\end{equation*}
with 
\be\label{betadingyi}
\beta = \|\dot{z}\|^{-2} \left( J_{\varepsilon}''(z)[\tilde{\omega}] + H_{z} + H_{\omega}, \dot{z} \right).
\ee

The functions \(H_{z}\) and \(H_{\omega}\) are defined by means of duality as 
\[
(H_{z}, v) = J_{\varepsilon}'(z)[v], \quad (H_{\omega}, v) = J_{\varepsilon}'(z + \omega)[v] - J_{\varepsilon}'(z)[v] - J_{\varepsilon}''(z)[\omega, v], \,\,\text{for}\,\, v \in H_{r}^{1}.
\]
Thus, they satisfy the two equations respectively 
\[
-\Delta H_{z} + (1+\e^2V(\varepsilon r)) H_{z} = -\Delta z +(1+\e^2 V(\varepsilon r)) z - z^{p}, \text{ in } \mathbb{R}^{n},
\]
and
\[
-\Delta H_{\omega} +(1+\e^2 V(\varepsilon r)) H_{\omega} = -\left( (z + \omega)^{p} - z^{p} - p z^{p-1} \omega \right), \text{ in } \mathbb{R}^{n}.
\]
In summary, \(\tilde{\omega}\) satisfies 
\begin{equation*}
\begin{aligned} 
-\Delta \tilde{\omega} +(1+\e^2 V(\varepsilon r) )\tilde{\omega} - p z^{p-1} \tilde{\omega} = &(z + \omega)^{p} - z^{p} - p z^{p-1} \omega+\beta(-\Delta \dot{z} + (1+\e^2V(\varepsilon r) )\dot{z}) \\ 
& -\left( -\Delta z + (1+\e^2V(\varepsilon r)) z - z^{p} \right), \text{ in } \mathbb{R}^{n}.
\end{aligned}
\end{equation*}

We now write, for some 
$N>0$ to be determined later,
\[
\tilde{\omega}(x) := \tilde{\omega}_{1}(x) + \tilde{\omega}_{2}(x), \text{ for } x \in B_{\rho - N}(0).
\]
Here, the functions $\tilde{\omega}_{1}(x)$ and $\tilde{\omega}_{2}(x)$ are defined respectively as the solutions of the two problems outlined below,
\begin{equation*}
\begin{cases}
-\Delta \tilde{\omega}_{1} +(1+\e^2 V(\varepsilon r)) \tilde{\omega}_{1} - p z^{p-1} \tilde{\omega}_{1} = 0, & \text{in } B_{\rho - N}(0), \\ 
\tilde{\omega}_{1} = \tilde{\omega}, & \text{on } \partial B_{\rho-N}(0),
\end{cases}
\end{equation*}
and
\begin{equation*}
\left\{
\begin{array}{ll} 
-\Delta \tilde{\omega}_{2} +(1+\e^2 V(\varepsilon r)) \tilde{\omega}_{2} - p z^{p-1} \tilde{\omega}_{2} = f_{z, \omega}, & \text{in } B_{\rho - N}(0), \\ 
\tilde{\omega}_{2} = 0, & \text{on } \partial B_{\rho - N}(0),
\end{array}
\right.
\end{equation*}
with 
\[
f_{z, \omega} = \left( (z + \omega)^{p} - z^{p} - p z^{p-1} \omega \right) +\beta(-\Delta \dot{z} + (1+\e^2V(\varepsilon r) )\dot{z}) - \left( -\Delta z +(1+\e^2 V(\varepsilon r)) z - z^{p} \right).
\]

We select positive numbers $\lambda_2$ and $\lambda_3$ satisfying the condition that
\begin{equation*}
\lambda_{1} < \lambda_{3} < \lambda_{2} < \lambda_{0} \text{ (recall that } \lambda_{1} < \lambda_{0} \text{)}. 
\end{equation*}

Let \(G(\cdot, y)\) be denoted as the Green's function of \(-\Delta + \lambda_{2}^{2}\) with pole \(y\). In other words, $G(x,y)$ is the solution of \(-\Delta u + \lambda_{2}^{2} u = \delta_{y}\). So it is true that 
\begin{equation}
\label{Green}
G(x, y) \leq C e^{-\lambda_{2}|x - y|} \frac{1}{|x - y|^{\frac{n-1}{2}}}, \text{ for } |x - y| \geq 1.
\end{equation}

Take $N$ such that
\begin{equation}
\label{xuanN}
\lambda_{0}^{2} - p \sup_{r \leq \rho - N} z^{p-1}(r) > \lambda_{2}^{2}\,\,\text{and}\,\,\rho-N>1.
\end{equation}
Arguing as in  \cite{cmp}, we compare \(\tilde{\omega}_{1}\) with the solution of
a constant–coefficient radial problem and use the maximum principle together
with the classical Bessel–type estimates 
  to obtain an exponential decay from the
boundary. More precisely, considering \(-\Delta u + V^{*}u=0\)  with
\[
V^{*}(r) := \bigl(1+\varepsilon^{2}V(\varepsilon r)\bigr) - p\,z^{p-1},
\quad R := \rho - N,
\]
we can obtain that, for all \(r\in[0,\rho-N]\),
\[
\tilde{\omega}_{1}(r)
\;\leq\;
C\,e^{-\lambda_{1}(R-r)}\,\tilde{\omega}_{1}(R)
\;=\;
C\,e^{\lambda_{1}N}\,e^{-\lambda_{1}(\rho-r)}\,\tilde{\omega}_{1}(\rho-N),
\]
where \(C>0\) and \(\lambda_{1}>0\) depend only on \(n\) and the spectral gap
associated with \(V^{*}\).

By \eqref{jingxiangjiedeguji}, it follows that
\begin{equation}
\label{tildew1guji}
\tilde{\omega}_{1}(r) \leq \tilde{C} \varepsilon ^3e^{-\lambda_{1}(\rho - r)}, \text{ for } r\in[0,\rho-N].
\end{equation}

Let \(\tilde{u}_{2}\) stand for the solution to the problem
\begin{equation*}
\left\{
\begin{array}{ll} 
-\Delta \tilde{u}_{2} + \lambda_{2}^{2} \tilde{u}_{2} = J_{1} + J_{2} + J_{3}, & \text{in } B_{\rho - N}(0), \\ 
\tilde{u}_{2} = 0, & \text{on } \partial B_{\rho - N}(0),
\end{array}
\right.
\end{equation*}
where 
\[
J_{1} := \left| (z + \omega)^{p} - z^{p} - p z^{p-1} \omega \right|,\]\[J_{2} := |\beta| |-\Delta \dot{z} + (1+\e^2V(\e r))\dot{z}|,
\]
and
\[
J_{3} := \left| -\Delta z +(1+\e^2 V(\varepsilon r)) z - z^{p} \right|.
\]
We infer from \eqref{xuanN} and the maximum principle that \(0 \leq |\tilde{\omega}_{2}| \leq \tilde{u}_{2}\). Now, we will estimate \(\tilde{u}_{2}\). For this purpose, we first examine the functions \(J_{1}\), \(J_{2}\) and \(J_{3}\). 

Moreover, the fact that $\omega\in E_{\varepsilon}$ implies
\be\label{f1}
\begin{aligned} 
J_{1} & = \left| (z + \omega)^{p} - z^{p} - p z^{p-1} \omega \right|\leq C |\omega|^{2 \wedge p} \\ 
& \leq C C_{1}^{2 \wedge p} e^{-(2 \wedge p) \lambda_{1}(\rho - r)}, \text{ for } r \leq \rho.
\end{aligned}
\ee
As in (A1), (A2), (A5), \eqref{dot{z}} and \eqref{betadingyi}, we can derive \(|\beta| \leq C\varepsilon^3\). As a consequence, 
\be\label{f2}
J_{2} = |\beta| |-\Delta \dot{z} + (1+\e^2V(\e r))\dot{z}| \leq C \varepsilon^3 e^{-\lambda_{2}(\rho - r)}, \text{ for } r \leq \rho.\ee
Furthermore, by the expression of \(z_{\rho, \varepsilon}(r)\), we can easily verify
\be\label{f3}
J_{3} = \left| -\Delta z +(1+\e^2 V(\varepsilon r)) z - z^{p} \right| \leq C e^{-\lambda_{2}(\rho - r)}, \text{ for } r \leq \rho. 
\ee
Henceforth, combining with \eqref{f1}-\eqref{f3}, we obtain 
\be\label{tildef}
J(r) := J_{1}+ J_{2}+ J_{3}\leq C e^{-\lambda_{2}(\rho - r)} + C C_{1}^{2 \wedge p} e^{-(2 \wedge p) \lambda_{1}(\rho - r)}, \text{ for } r \leq \rho.
\ee

Letting \(G_{B}(x,y)\) denotes the Green's function for the operator \(-\Delta u + \lambda_{2}^{2} u\) in \(B_{\rho - N}(0)\) with Dirichlet boundary conditions, by the maximum principle, it is easy to check that 
\be\label{tildeu2}
\tilde{u}_{2}(x) = \int_{B_{\rho - N}(0)} G_{B}(x, y) J(y) dy \leq \int_{\mathbb{R}^{n}} G(x, y) J(y) dy, \quad x \in B_{\rho - N}(0). \ee

Let \(\sigma\) be a small constant to be fixed subsequently, and for any $x\in B_{\rho-N}(0)$, we partition $\R^n$ into two regions:
\[
X_{1} := \left\{ y \in \mathbb{R}^{n} : |y - x| \geq (1 - \sigma) ||x| - \rho| \right\},
\]
and
\[
X_{2} := \left\{ y \in \mathbb{R}^{n} : |y - x| \leq (1 - \sigma) ||x| - \rho| \right\}.
\]

From \eqref{Green} \eqref{tildef}, we have
\[
\begin{aligned} 
\int_{X_{1}} G(x, y) J(y) dy & \leq C \|J\|_{L^{\infty}} \int_{(1 - \sigma)(\rho - |x|)}^{\infty} e^{-\lambda_{2} r} r^{n-1} dr \\ 
& \leq C \left( 1 + C_{1}^{2 \wedge p} \right) \int_{(1 - \sigma)(\rho - |x|)}^{\infty} e^{-\lambda_{2} r} r^{n-1} dr.
\end{aligned}
\]
Let \(\sigma\) be a fixed small number such that \(\lambda_{3} < (1 - \sigma) \lambda_{2}\), then it holds that 
\[
\int_{(1 - \sigma)(\rho - |x|)}^{\infty} e^{-\lambda_{2} r} r^{n-1} dr \leq C e^{-\lambda_{3}(\rho - |x|)}.
\]
By virtue of the last two equations, we obtain that 
\be\label{A1jifen}
\int_{X_{1}} G(x, y) J(y) dy \leq C \left( 1 + C_{1}^{2 \wedge p} \right) e^{-\lambda_{3}(\rho - |x|)}, \text{ for } |x|\leq (\rho-N). 
\ee

Since
\[
|y - x| + |\rho - |y|| \geq |\rho - |x||, \text{ for } y \in X_{2},
\]
and by means of \eqref {Green} and \eqref {tildef}, we show that
\[
\begin{aligned} 
\int_{X_{2}} G(x, y) J(y) dy & \leq C \left( 1 + C_{1}^{2 \wedge p} \right) \int_{X_{2}} e^{-\lambda_{2}|x - y|} e^{-\lambda_{2}|\rho - |y||}dy \\ 
& \leq C \left( 1 + C_{1}^{2 \wedge p} \right) \int_{X_{2}} e^{-\lambda_{2}|\rho - |x||}dy.
\end{aligned}
\]
Since \(|X_{2}| \leq C(1 - \sigma)^{n} |\rho - |x||^{n}\), we can verify
\be\label{A2jifen}
\int_{X_{2}} G(x, y) J(y) dy \leq C \left( 1 + C_{1}^{2 \wedge p} \right) e^{-\lambda_{3}(\rho - |x|)}.
\ee
As a result, from \eqref{tildeu2}, \eqref{A1jifen} and \eqref{A2jifen}, we conclude that 
\[
|\tilde{\omega}_2|\leq\tilde{u}_{2}(x) \leq C \left( 1 + C_{1}^{2 \wedge p} \right) e^{-\lambda_{3}(\rho - |x|)}, \text{ for } x\in B_{\rho-N}(0).
\]

In light of \eqref{tildew1guji}, we have
\be\label{tildewdezuizhongguji}
|\tilde{\omega}(r)| 
\leq \overline{C}_{0} \left( 1 + C_{1}^{2 \wedge p} \right) e^{-\lambda_{3}(\rho - r)} + \tilde{C} \varepsilon^3 e^{-\lambda_{1}(\rho-r)}, \text{ for } 0\leq r\leq(\rho-N).\ee

Fixing \(\gamma > 2 C_{0}\) large enough, with $C_0$ is given in \eqref{sepwdefanshu}. So that for \(\varepsilon\) sufficiently small, using \eqref{sepwdefanshu}, we have shown
\begin{equation*}
\|\tilde{\omega}\| =\|\mathcal{A}_{\varepsilon}(\omega)\|\leq \gamma \varepsilon^3 \|z\|,
\end{equation*}
and
\be\label{budengshi}
\overline{C}_{0} \left( 1 + C_{1}^{2 \wedge p} \right) e^{-(\lambda_{3} - \lambda_{1}) N} + \tilde{C} \varepsilon^3 \leq \gamma,
\ee
where \(\overline{C}_{0}\) is given in \eqref{tildewdezuizhongguji}. Combining \eqref{tildewdezuizhongguji} and \eqref{budengshi}, we see that
\begin{align*}
\bs
|\tilde{\omega}(r)|&\leq \overline{C}_{0} \left( 1 + C_{1}^{2 \wedge p} \right) e^{-(\lambda_{3} - \lambda_{1})(\rho - r)} e^{-\lambda_{1}(\rho - r)} +\tilde{C}\e^3e^{-\lambda_1(\rho-r)}\\
&=e^{-\lambda_1(\rho-r)}\left[\overline{C}_{0} \left( 1 + C_{1}^{2 \wedge p} \right) e^{-(\lambda_{3} - \lambda_{1})(\rho - r)}+\tilde{C}\e^3\right]\\&\leq e^{-\lambda_{1}(\rho - r)}\left[\overline{C}_{0} \left( 1 + C_{1}^{2 \wedge p} \right) e^{-(\lambda_{3} - \lambda_{1})N}+\tilde{C}\e^3\right]\\
&\leq\gamma e^{-\lambda_{1}(\rho - r)}, \text{ for } 0\leq r\leq(\rho-N).
\es
\end{align*}

Alternatively, owing to \(\rho \in \Omega_{\varepsilon}\), \eqref{jingxiangjiedeguji} and the fact that \(\|\tilde{\omega}\| \leq \gamma \varepsilon^3 \|z\|\) imply, for $1<(\rho-N)\leq r\leq \rho$,
\begin{align}
\label{>rho-N}\bs
|\tilde{\omega}(r)|&\leq Cr^{\frac{1-n}{2}}\|\tilde{\omega}\| \leq C(\rho-N)^{\frac{1-n}{2}} \gamma \varepsilon ^3\|z\|\\
&\leq C(\rho-N)^{\frac{1-n}{2}} \gamma \varepsilon ^3\e^{\frac{3-3n}{2}} \leq C\gamma \varepsilon ^3\leq\gamma e^{-\lambda_1(\rho-r)},
\es\end{align}
provided \(\varepsilon\) sufficiently small such that $C\e^3\leq e^{-\lambda_1N}$ in \eqref{>rho-N}.

Finally, we have that for any $\omega\in E_{\varepsilon}$, 
$$|\tilde{\omega}(r)| \leq \gamma e^{-\lambda_{1}(\rho - r)},\,\,0\leq r\leq \rho.$$

\smallskip
By step 1 and step 2, \(\mathcal{A}_{\varepsilon}\) is a contraction from \(E_{\varepsilon}\) to itself. Thus, there exists \(\omega \in E_{\varepsilon}\) such that \(\omega = \mathcal{A}_{\varepsilon}(\omega)\), solving i), ii) and iii). As for iv), it can be proved from \eqref{sepwdefanshu}.

The $C^{1}$–dependence of $\omega$ on $\rho$, as well as the corresponding estimate for 
$\bigl\|\tfrac{\partial \omega}{\partial \rho}\bigr\|$, can be obtained by arguments analogous to those in
\cite{cmp}; we therefore omit the details.
This completes the proof of Proposition~\ref{prop4.3}.

\end{proof}

\section{proof of theorem \ref{thm1.1}}
\subsection{Existence of solutions to \eqref{disanjiediyige}}\label{5.1}
To begin with, we establish the following Lemma.

\begin{lemma} \label{lem5.1}
There exists a constant \(C_{0} > 0\), for \(\varepsilon > 0\) small, the following holds
\[
\varepsilon^{3n-3} J_{\varepsilon}(z_{\rho, \varepsilon} + \omega_{\rho, \varepsilon}) = C_{0} \e^2M_\e(\varepsilon \rho) + o(1), \quad \rho \in \Omega_\e.
\]
\end{lemma}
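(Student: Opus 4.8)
The plan is to expand $J_\varepsilon(z_{\rho,\varepsilon}+\omega_{\rho,\varepsilon})$ about the approximate profile, reduce the radial energy to a one–dimensional one via the substitution $r=\rho+t$, and evaluate that one–dimensional energy explicitly using the scaling $Q_\lambda(s)=\lambda^{2/(p-1)}Q(\lambda s)$ together with the Pohozaev identity \eqref{pohU}; throughout, $\rho\in\Omega_\varepsilon$ forces $\rho\sim\varepsilon^{-3}$ and all error terms will be uniform in $\rho$. First I would remove the correction $\omega=\omega_{\rho,\varepsilon}$: by Taylor's formula with integral remainder,
\[
J_\varepsilon(z+\omega)=J_\varepsilon(z)+J_\varepsilon'(z)[\omega]+\int_0^1(1-s)\,J_\varepsilon''(z+s\omega)[\omega,\omega]\,ds,
\]
so that, using (A1) ($\|J_\varepsilon'(z)\|\lesssim\varepsilon^3\|z\|$), \eqref{wfanshu} ($\|\omega\|\lesssim\varepsilon^3\|z\|$), (A2) ($\|J_\varepsilon''(z+s\omega)\|\le C$) and \eqref{zrho} ($\|z\|^2\sim\varepsilon^{3-3n}$), the last two terms are $O(\varepsilon^6\|z\|^2)=O(\varepsilon^{9-3n})$, whence $\varepsilon^{3n-3}J_\varepsilon(z+\omega)=\varepsilon^{3n-3}J_\varepsilon(z)+O(\varepsilon^6)$. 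I would stress that (A2) already incorporates the uniform pointwise bound \eqref{wzhudian}, which is exactly what makes this step work for all $p>1$, including the critical and supercritical regimes.

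Next, since $\rho\ge C_1/(2\varepsilon^3)$ the profile $U_{\rho,\varepsilon}(\cdot-\rho)$ is of size $O(e^{-c/\varepsilon^3})$ on $\operatorname{supp}(1-\zeta_\varepsilon)\cup\operatorname{supp}\zeta_\varepsilon'$, so replacing $z=\zeta_\varepsilon U_{\rho,\varepsilon}(\cdot-\rho)$ by $U_{\rho,\varepsilon}(\cdot-\rho)$ in every term of $J_\varepsilon$ costs only $O(e^{-c/\varepsilon^3})$. Then I substitute $r=\rho+t$, write $1+\varepsilon^2V(\varepsilon r)=\beta_{\varepsilon,\rho}^2+\varepsilon^2\big(V(\varepsilon\rho+\varepsilon t)-V(\varepsilon\rho)\big)$ and $(\rho+t)^{n-1}=\rho^{n-1}+\sum_{k=1}^{n-1}\binom{n-1}{k}\rho^{n-1-k}t^k$, and use the exponential decay of $U_{\rho,\varepsilon}=Q_{\beta_{\varepsilon,\rho}}$ (so the $t$–integrals localize to $t=O(1)$) together with the boundedness of $V'$. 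The correction terms are then of the form $\varepsilon^{\ge3}\rho^{n-1}$ (from the potential expansion) or $\rho^{n-1-k}$ with $k\ge1$ (from the binomial expansion), and extending each $t$–integral from $(-\rho,\infty)$ to $\mathbb{R}$ adds a further $O(\rho^{n-1}e^{-c\rho})$; after multiplying by $\varepsilon^{3n-3}$ these become $O(\varepsilon^3)$, $O(\varepsilon^{3k})$ and $o(1)$ respectively. This yields
\[
\varepsilon^{3n-3}J_\varepsilon(z)=\varepsilon^{3n-3}\rho^{n-1}\,\bar J_{\beta_{\varepsilon,\rho}}\!\big(Q_{\beta_{\varepsilon,\rho}}\big)+o(1),
\]
where $\bar J_\lambda$ is the one–dimensional functional introduced near \eqref{lambdadefangcheng}.

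It then remains to evaluate $\bar J_\lambda(Q_\lambda)$. From $Q_\lambda(s)=\lambda^{2/(p-1)}Q(\lambda s)$ a direct change of variables gives $\int_{\mathbb{R}}(Q_\lambda')^2=\lambda^{\frac{p+3}{p-1}}\int_{\mathbb{R}}(Q')^2$, $\lambda^2\int_{\mathbb{R}}Q_\lambda^2=\lambda^{\frac{p+3}{p-1}}\int_{\mathbb{R}}Q^2$ and $\int_{\mathbb{R}}Q_\lambda^{p+1}=\lambda^{\frac{p+3}{p-1}}\int_{\mathbb{R}}Q^{p+1}$ — all three terms sharing the power $\lambda^{(p+3)/(p-1)}$ — so that $\bar J_\lambda(Q_\lambda)=\lambda^{\frac{p+3}{p-1}}\bar J_0(Q)$; by the evenness of $Q$ and \eqref{pohU} one finds $\bar J_0(Q)=2A$ with $A=\int_0^{\infty}|Q'(r)|^2\,dr$. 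Recalling $\beta_{\varepsilon,\rho}^2=1+\varepsilon^2V(\varepsilon\rho)$ and that $\varepsilon^2M_\varepsilon(\varepsilon\rho)=\varepsilon^{2n-2}(\varepsilon\rho)^{n-1}\bigl(1+\varepsilon^2V(\varepsilon\rho)\bigr)^{\frac{p+3}{2(p-1)}}=\varepsilon^{3n-3}\rho^{n-1}\bigl(1+\varepsilon^2V(\varepsilon\rho)\bigr)^{\frac{p+3}{2(p-1)}}$, this gives $\varepsilon^{3n-3}J_\varepsilon(z)=2A\,\varepsilon^2M_\varepsilon(\varepsilon\rho)+o(1)$, and combining with the first step proves the lemma with $C_0=2A$.

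The proof contains no essential difficulty; the one place demanding care is the error accounting in the one–dimensional reduction, where one must verify that every correction term indeed survives multiplication by the large factor $\varepsilon^{3n-3}$ and still tends to $0$. A secondary point is that the control of the nonlinear remainder in the first step, for possibly large $p$, relies on the uniform estimate \eqref{wzhudian}, i.e. on having carried out the reduction inside $E_\varepsilon$ rather than in a plain $H^1$–ball.
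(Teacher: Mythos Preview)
Your proof is correct and follows essentially the same route as the paper: Taylor-expand $J_\varepsilon(z+\omega)$ about $z$ and use (A1), (A2), \eqref{wfanshu}, \eqref{zrho} to absorb the $\omega$–terms into $O(\varepsilon^{9-3n})$; then replace $z$ by $U_{\rho,\varepsilon}(\cdot-\rho)$ up to exponentially small errors, localize via $r=\rho+t$ to extract the factor $\rho^{n-1}$, and evaluate the resulting one–dimensional energy through the scaling $Q_\lambda(s)=\lambda^{2/(p-1)}Q(\lambda s)$. The only cosmetic difference is in the expression of the constant: the paper writes $C_0=\bigl(\tfrac12-\tfrac1{p+1}\bigr)\int_{\mathbb{R}}Q^{p+1}$, whereas you obtain $C_0=2A$ with $A=\int_0^\infty|Q'|^2$; by the Pohozaev identity \eqref{pohU} these agree.
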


\begin{proof}
For conciseness, we denote $z$ in place of $z_{\rho,\e}$ and $\omega$ in place of $\omega_{\rho,\e}$. Because
\[
J_{\varepsilon}(z + \omega) = J_{\varepsilon}(z) + J_{\varepsilon}'(z)[\omega] + \int_{0}^{1} J_{\varepsilon}''(z + s \omega)[\omega]^{2} ds,
\]
and combining \eqref{wfanshu}, (A1) and (A2), one can prove that
\be\label{O}
J_{\varepsilon}(z + \omega) = J_{\varepsilon}(z) + O\left( \varepsilon^{9 - 3n} \right).
\ee

The condition \(z\) concentrates near \(\rho\) yields 
\be\label{jifenfenjie}
\begin{aligned} 
J_{\varepsilon}(z) & = \int_{0}^{\infty} r^{n-1} \left( \frac{|z'|^{2} + (1+\e^2V(\varepsilon r)) z^{2}}{2} - \frac{z^{p+1}}{p+1} \right) dr \\ 
& = \rho^{n-1} \int_{\mathbb{R}} \left( \frac{|U'|^{2} +(1+\e^2 V(\varepsilon \rho)) U^{2}}{2} - \frac{U^{p+1}}{p+1} \right) dr (1 + o(1)).
\end{aligned}
\ee
Combining with the fact
\[
U(r) = U_{\rho,\e}(r)=(1+\e^2V(\e\rho))^{\frac{1}{p-1}} Q((1+\e^2V(\e\rho))^{\frac{1}{2}} r),
\]
we have
\be\label{Ujifen}
\int_{\mathbb{R}} \left( \frac{|U'|^{2} +(1+\e^2 V(\varepsilon \rho)) U^{2}}{2} - \frac{U^{p+1}}{p+1} \right) dr = C_{0} (1+\e^2V(\e\rho))^{\frac{p+3}{2(p-1)}},
\ee
where 
\[
C_{0} = \left( \frac{1}{2} - \frac{1}{p+1} \right) \int_{\mathbb{R}} Q^{p+1}(t)dt.
\]

Substituting \eqref{jifenfenjie} and \eqref{Ujifen} into the preceding equation \eqref{O}, we obtain
\begin{align*}
J_{\varepsilon}(z + \omega) &=\rho^{n-1}C_0(1+\e^2V(\e\rho))^{\frac{p+3}{2(p-1)}}(1+o(1))+O(\e^{9-3n})\\
&=\rho^{n-1}C_0(1+\e^2V(\e\rho))^{\frac{p+3}{2(p-1)}}+o(\e^{3-3n})+O(\e^{9-3n}).   
\end{align*}
In light of the definition of \(M_\e(r)\) as defined in \eqref{Mepr}, we get 
\begin{align*}
J_{\varepsilon}(z + \omega) &= \frac{C_{0}}{\varepsilon^{n-1}\e^{2n-4}} \e^{2n-4}(\varepsilon \rho)^{n-1} (1+\e^2V(\e\rho))^{\frac{p+3}{2(p-1)}}+o\left( \varepsilon^{3 - 3n}\right)+O(\e^{9-3n})\\ 
&= \frac{C_{0}}{\varepsilon^{3n-5}} M_\e(\varepsilon \rho) +o\left( \varepsilon^{3 - 3n}\right)+O(\e^{9-3n}),    
\end{align*}
which is equivalent to
\begin{equation}
\label{epjieshu}
\e^{3n-3}J_{\varepsilon}(z+\omega)=C_0\e^2M_\e(\e\rho)+o(1)+O(\e^6)=C_0\e^2M_\e(\e\rho)+o(1),
\end{equation}
and Lemma \ref {lem5.1} is  established.
\end{proof}

\smallskip
\begin{Rem}
Regarding \eqref{epjieshu}, we explain here why the term $C_{0}\varepsilon^{2} M_\varepsilon(\varepsilon \rho)$ is of order $\varepsilon^{0}$. By the definition of $M_\varepsilon(r)$, we have
\[
C_{0}\varepsilon^{2} M_\varepsilon(\varepsilon \rho)
= C_{0}\varepsilon^{3n-3}\rho^{\,n-1}
\bigl(1+\varepsilon^{2}V(\varepsilon \rho)\bigr)^{\frac{p+3}{2(p-1)}}.
\]
Since $\rho \sim \varepsilon^{-3}$, or $\rho^{\,n-1}\sim \varepsilon^{-3(n-1)}$, then
\(
\varepsilon^{3n-3}\rho^{\,n-1}\sim \varepsilon^{3n-3}\varepsilon^{-3(n-1)}
= \varepsilon^{0}.
\)
Therefore, $C_{0}\varepsilon^{2} M_\varepsilon(\varepsilon \rho)$ is indeed of order $\varepsilon^{0}$.

\end{Rem}
\medskip

\begin{proof}[\textbf{Proof of Theorem \ref{thm1.1} without constraint:}]

(i) We first consider the range of the exponent $p$ lying in 
\(
\left(1,\frac{n+2}{n-2}\right].
\)

Applying Lemma~\ref{lem5.1} and the condition $M_\varepsilon'(t_\varepsilon) = 0$, 
it follows that 
\[
\Psi_{\varepsilon}(\rho) := J_{\varepsilon}\big(z_{\rho, \varepsilon} + \omega_{\rho, \varepsilon}\big)
\]
admits a stationary point at some $\rho_{\varepsilon} \sim t_\varepsilon/\varepsilon$ with 
$\rho_{\varepsilon} \in \Omega_{\varepsilon}$. Invoking Proposition~\ref{prop2.2}, a stationary 
point of $\Psi_{\varepsilon}$ of this form yields a critical point 
\[
\tilde{u}_{\varepsilon}(r) 
= z_{\rho_{\varepsilon}, \varepsilon}(r)+ \omega_{\rho_{\varepsilon}, \varepsilon}(r)
\]
of the functional $J_\varepsilon$, which constitutes a radial solution to \eqref{disanjiediyige}. 
By a rescaling argument, we deduce that 
\[
u_{\varepsilon}(r) := \tilde{u}_{\varepsilon}(r/\varepsilon)
\]
is also a radial solution to \eqref{fangcheng1}. Since 
\[
\tilde{u}_{\varepsilon}(r) \sim U(r - \rho_{\varepsilon}) 
\sim U\big(r - t_\varepsilon/\varepsilon\big),
\]
we have 
\[
u_{\varepsilon}(r) \sim U\big((r - t_\varepsilon)/\varepsilon\big),
\]
which implies that $u_{\varepsilon}$ concentrates near the sphere $\lvert x\rvert = t_\varepsilon$.

\smallskip
(ii) In the supercritical case, where $p > \frac{n+2}{n-2}$, the proof proceeds by
introducing a truncation of the nonlinear term and then deriving a priori
$L^{\infty}$-estimates for the corresponding solutions. The argument of the
previous case can then be adapted with only minor modifications.

For some \(K>0\), we construct a smooth positive function
\(Y_{K}:\mathbb{R}\to\mathbb{R}\) with the following key properties:
\[
Y_{K}(t)=|t|^{p+1} \quad \text{for } |t|\le K,
\qquad
Y_{K}(t)=(K+1)^{p+1} \quad \text{for } |t|\ge K+1.
\]
We then define the truncated functional \(J_{\varepsilon,K}:H_{r}^{1}\to\mathbb{R}\)
by replacing \(|u|^{p+1}\) with \(Y_{K}(u)\) in the definition of \(J_{\varepsilon}\).

First, we observe that there exists a constant \(K_{0}>0\) such that
\(\|z_{\rho,\varepsilon}\|_{L^{\infty}}\le K_{0}\) for all \(\rho\in\Omega_{\varepsilon}\)
and all sufficiently small \(\varepsilon\). Consequently, if we fix
\(K\ge K_{0}\), the operator \(P J_{\varepsilon,K}''(z)\) is invertible and its
inverse \(L_{\varepsilon,K}\) has a norm that is uniformly bounded, independently
of \(K\).

Moreover, if \(K\) is chosen large enough, then, in combination with the
pointwise bounds on \(|\omega(r)|\) in \eqref{wzhudian}, we obtain that the
estimates for \(z+\omega\) are also uniform with respect to \(K\). As a result,
the truncated problem yields a genuine solution of \eqref{disanjiediyige}
for \(\varepsilon\) sufficiently small.

\end{proof}
\subsection{With constraint \eqref {disanjiedierge}.}
\begin{proof}[\textbf{Proof of Theorem \ref{thm1.1} completed:}]

Next, we solve the \eqref {disanjiediyige} with constraint condition \eqref{disanjiedierge}. From subsection \ref{5.1}, we can derive that there exists a small constant $\varepsilon_0>0$, for any $0<\e\leq\e_0$, $\tilde{u}_\e(r)=z_{\rho_\e,\e}(r)+\omega_{\rho_\e,\e}(r)$ is a radial concentrated solution of \eqref{disanjiediyige}.

It is obvious that \eqref{disanjiedierge} is equivalent to 
$$\int_0^{+\infty}r^{n-1}\tilde{u}^2_\e(r)dr=\frac{a^{\frac{2}{p-1}}\e^{\frac{4}{p-1}-n}}{\omega_{n-1}}.$$
Combining with $\tilde{u}_\e(r)=z_{\rho_\e,\e}(r)+\omega_{\rho_\e,\e}(r)$, we get
$$\int_0^{+\infty}r^{n-1}\tilde{u}^2_\e(r)dr=\int_0^{+\infty}r^
{n-1}(z^2_{\rho_\e,\e}(r)+2z_{\rho_\e,\e}(r)\omega_{\rho_\e,\e}(r)+\omega^2_{\rho_\e,\e}(r))dr,
$$
where
\begin{align}
\bs
\label{zw}
\int_0^{+\infty}r^
{n-1}z_{\rho_\e,\e}(r)\omega_{\rho_\e,\e}(r)dr&\leq C \|z_{\rho_\e,\e}(r)\|\|\omega_{\rho_\e,\e}(r)\|\\
&\leq C \|z_{\rho_\e,\e}(r)\|\gamma\e^3  \|z_{\rho_\e,\e}(r)\|\\
&=o(\|z_{\rho_\e,\e}(r)\|^2),\es
\end{align}
and
\be\label{omega^2}
\int_0^{+\infty}r^
{n-1}\omega^2_{\rho_\e,\e}(r)dr\leq C \|\omega_{\rho_\e,\e}(r))\|^2=o(\|z_{\rho_\e,\e}(r)\|^2).
\ee

In addition, since $z_{\rho_\e,\e}(r)=\zeta_{\varepsilon}(r)U_{\rho_\e,\e}(r-\rho_\e)$ and $$U_{\rho,\e}(r)=(1+\e^2V(\e\rho))^{\frac{1}{p-1}}Q((1+\e^2V(\e\rho))^{\frac{1}{2}}r),$$ it is easy to check that
\begin{align}
\label{z^2}\begin{split}
\int_0^{+\infty}r^
{n-1}z^2_{\rho_\e,\e}(r)dr&=\rho_\e^{n-1}\int_{\R}U_{\rho_\e,\e}^2(r)dr(1+o(1))\\
&=\rho_\e^{n-1}\int_{\R}(1+\e^2V(\e\rho_\e))^{\frac{2}{p-1}}Q^2((1+\e^2V(\e\rho_\e))^{\frac{1}{2}}r)dr(1+o(1))\\
&=\rho_\e^{n-1}(1+\e^2V(\e\rho_\e))^{\frac{2}{p-1}-\frac{1}{2}}\int_{\R}Q^2(t)dt(1+o(1))\\
&=\rho_\e^{n-1}\int_{\R}Q^2(t)dt(1+o(1))\\
&=C_\e^{n-1}\e^{3-3n}(1+o(1))\int_{\R}Q^2(t)dt,
\end{split}\end{align}
where $(1+\e^2V(\e\rho_\e))^{\frac{2}{p-1}-\frac{1}{2}}=1+o(1)$ and $\frac{1}{2}C_1\leq C_\e\leq2C_2$, according to the fact that $\frac{1}{2}C_1\e^{-3}\leq\rho_\e\leq2C_2\e^{-3}$.

As in \eqref{zw}, \eqref{omega^2} and \eqref{z^2}, it follows that  \eqref{disanjiedierge} is equivalent to 
\begin{equation*}
C_\e^{n-1}\e^{3-3n}\int_{\R}Q^2(t)dt+o(\e^{3-3n})=\frac{a^{\frac{2}{p-1}}\e^{\frac{4}{p-1}-n}}{\omega_{n-1}},
\end{equation*}
and this equivalence implies,
\begin{equation*}
BC_\e^{n-1}\e^{3-3n}+o(\e^{3-3n})-a^{\frac{2}{p-1}}\e^{\frac{4}{p-1}-n}=0,
\end{equation*}
with $B:=\omega_{n-1}\int_{\R}Q^2(t)dt>0$. According to Lemma \ref{Lemma2.10}, for $a>0$ large enough, actually with $a\geq \left(\frac{2^nC_2^{n-1}B}{\var_0^{\frac{4}{p-1}-3+2n}}\right)^{\frac{p-1}{2}}$, we define
\begin{equation*}
F(\e):= BC_\e^{n-1}\e^{3-3n}-a^{\frac{2}{p-1}}\e^{\frac{4}{p-1}-n}+o\left(\e^{3-3n}\right).    
\end{equation*}
By direct computations, it holds
\begin{equation*}
\begin{split}
F(\var)\leq& 2^{n-1}C_2^{n-1}B\var^{3-3n}-a^{\frac{2}{p-1}}\var^{\frac{4}{p-1}-n}+o\left(\var^{3-3n}\right),\\
F(\var)\geq&\left(\frac{1}{2}\right)^{n-1}C_1^{n-1}B\var^{3-3n}-a^{\frac{2}{p-1}}\var^{\frac{4}{p-1}-n}+o\left(\var^{3-3n}\right).    
\end{split}  
\end{equation*}

Now let $\varepsilon_1=\left(\frac{2^nC_2^{n-1}B}{a^{\frac{2}{p-1}}}\right)^{\frac{1}{\frac{4}{p-1}-3+2n}}$,  it is easy to check that
\begin{equation*}
\begin{split}
F(\varepsilon_1)\leq
&\var_1^{3-3n}\left(2^{n-1}C_2^{n-1}B-a^{\frac{2}{p-1}}\var_1^{\frac{4}{p-1}-3+2n}\right)+o\left(\var_1^{3-3n}\right)\\
=&\var_1^{3-3n}\left(-2^{n-1}C_2^{n-1}B+o(1)\right)<0.
\end{split}
\end{equation*}
Similarly, taking $\varepsilon_2=\left(\frac{\left(\frac{1}{2}\right)^nC_1^{n-1}B}{a^{\frac{2}{p-1}}}\right)^{\frac{1}{\frac{4}{p-1}-3+2n}}$,
we observe that
\begin{equation*}
\begin{split}
F(\varepsilon_2)\geq&\var_2^{3-3n}\left(\left(\frac{1}{2}\right)^{n-1}C_1^{n-1}B-a^{\frac{2}{p-1}}\var_2^{\frac{4}{p-1}-3+2n}\right)+o\left(\var_2
^{3-3n}\right)\\=&\var_2^{3-3n}\left(\left(\frac{1}{2}\right)^{n}C_1^{n-1}B+o(1)\right)>0.      \end{split}
\end{equation*}
Then there exists $\varepsilon\in (\varepsilon_2,\varepsilon_1)$,  solving $F(\varepsilon)=0$. For any $a\geq \left(\frac{2^nC_2^{n-1}B}{\var_0^{\frac{4}{p-1}-3+2n}}\right)^{\frac{p-1}{2}}$, equation \eqref{disanjiedierge} has a solution $\varepsilon_a\in\left(\left(\frac{\left(\frac{1}{2}\right)^nC_1^{n-1}B}{a^{\frac{2}{p-1}}}\right)^{\frac{1}{\frac{4}{p-1}-3+2n}},\left(\frac{2^nC_2^{n-1}B}{a^{\frac{2}{p-1}}}\right)^{\frac{1}{\frac{4}{p-1}-3+2n}}\right)$. That is to say, as $a\to+\infty$, there exists a sequence of radial solutions $\tilde{u}_{\e_a}(r)=z_{\rho_{\e_a},\e_a}(r)+\omega_{\rho_{\e_a},\e_a}(r)$
to equations \eqref{disanjiediyige} and \eqref{disanjiedierge}. Through scaling transformations, it can be derived that $u_{\e_a}(r)=\tilde{u}_{\e_a}\left(\frac{r}{\e_a}\right)$
is a radial solution to equations \eqref{fangcheng1}-\eqref{fangcheng11} and \(u_{\varepsilon_a}(r)\) concentrates near the sphere \(|x| = t_{\e_a}\). This concludes the proof of Theorem \ref{thm1.1}.
\end{proof}
\medskip
\noindent\textbf{Acknowledgments}:  
This research was supported by the National Natural Science Foundation of China (Grant Nos. 12271539). The authors are very grateful to Professor Jun Yang for many helpful discussions and suggestions.

\end{document}